\def\Bbb#1{\mbox{\bb #1}}
\def\reals{ {\Bbb R}}
\def\complex{ {\Bbb C}}
\def\naturals{ {\Bbb N}}
\newcommand{\bend}{\hspace*{0ex} \hfill \hbox{\vrule height
    1.5ex\vbox{\hrule width 1.4ex \vskip 1.4ex\hrule  width 1.4ex}\vrule
    height 1.5ex}}
\newtheorem{example}[theorem]{Example}
\newtheorem{remark}[theorem]{Remark}
\newtheorem{prop}[theorem]{Proposition}
\newtheorem{cor}[theorem]{Corollary}
\newcommand{\iu}{{\rm i}}
\def\bea{\begin{eqnarray*}}
\def\eea{\end{eqnarray*}}
\def\bean{\begin{eqnarray}}
\def\eean {\end{eqnarray}}
\begin{document}

\thispagestyle{empty}
\bibliographystyle{siam}

\title{Decay properties of spectral projectors  \\ 
with applications to electronic structure\thanks{Work supported by 
National Science Foundation grants 
DMS-0810862 and DMS-1115692
and by a grant of the University Research Committee
of Emory University}.}

\author {
Michele Benzi\thanks{Department of Mathematics and Computer
Science, Emory University, Atlanta, Georgia 30322, USA
(benzi@mathcs.emory.edu).} \and
Paola Boito\thanks{Department of Mathematics and Computer
Science, Emory University, Atlanta, Georgia 30322, USA. Current
address: DMI-XLIM UMR 7252 Universit\'e de Limoges - CNRS, 
123 avenue Albert Thomas, 87060 Limoges Cedex, France
(paola.boito@unilim.fr).} \and
Nader Razouk\thanks{Department of Mathematics and Computer Science,
Emory University, Atlanta, Georgia 30322, USA. Current
address: Ernst \& Young GmbH Wirtschaftspr\"ufungsgesellschaft,
Arnulfstra\ss e 126, 80636 M\"unchen, Germany (nrazouk@gmail.com).}}

\maketitle

\markboth{{\sc M.~Benzi, P.~Boito, and N.~Razouk}}{{Decay properties
of spectral projectors}}

\begin{abstract}
Motivated by applications in quantum chemistry and solid state physics,
we apply general results from approximation theory and matrix analysis
to the study of the decay
properties of spectral projectors associated with large and
sparse Hermitian matrices.
 Our theory leads to a rigorous proof 
of the exponential off-diagonal decay 
(``nearsightedness'') for the density matrix of gapped systems
at zero electronic temperature in both orthogonal and
non-orthogonal representations, thus providing a firm theoretical
basis for the possibility of linear scaling methods
in electronic structure calculations for non-metallic systems.
We further discuss
the case of density matrices for metallic systems
at positive electronic temperature. 
A few other possible applications are also discussed.
\end{abstract}

\begin{keywords}
electronic structure, localization, density functional theory,
density matrix, spectral gap, matrix function, orthogonal projector 

\end{keywords}

\begin{AMS} Primary 65F60, 65F50, 65N22. Secondary 81Q05, 81Q10.
\end{AMS}

\section{Introduction}\label{sec:Introduction}
The physical and chemical properties of materials are largely
determined by the electronic structure of the atoms and 
molecules found in them. In all but the simplest cases,
the electronic structure can only
be determined approximately, and since the late 1920's
a huge amount of work
has been devoted to finding suitable 
approximations and numerical methods for solving this
fundamental problem. 
Traditional methods for electronic structure computations
are based on the solution of generalized eigenvalue problems 
(\lq\lq diagonalization\rq\rq) for a sequence of large
Hermitian matrices, known as one-particle Hamiltonians. The 
computational cost of this
approach scales
cubically in the size $n$ of the problem, which is in
turn determined by the number of electrons in the system.
For large systems, the costs become prohibitive; this
is often referred to as \lq\lq the $O(n^3)$ bottleneck\rq\rq\
in the literature.

In the last two decades, a number of researchers have been
developing approaches that are capable in many 
cases to achieve \lq\lq optimal\rq\rq\
computational complexity: the computational
effort scales linearly in the number of electrons, 
leading to better performance for sufficiently large 
systems and making the 
electronic structure problem tractable for
large-scale systems. These methods, 
often referred
to as \lq\lq $O(n)$ methods,\rq\rq\  apply mostly
to insulators. 
They avoid diagonalization by computing instead
the {\em density matrix},
a matrix which encodes all the important physical properties of
the system.
For insulators at zero temperature, this is the spectral
projector onto the invariant subspace associated with
the eigenvalues of the Hamiltonian falling below a certain
value. For systems at positive temperatures, the density matrix 
can be expressed as a smooth function of the Hamiltonian.

The possibility of developing such methods
rests on a deep property of electronic matter, called
\lq\lq nearsightedness\rq\rq\ by W.~Kohn \cite{kohn}.
Kohn's \lq\lq Nearsightedness Principle\rq\rq\ 
expresses the fact that for a large class of systems 
the effects of disturbances, or 
perturbations, 
remain localized and thus do not propagate beyond
a certain (finite) range; in other words,
far away parts
of the system do not \lq\lq see\rq\rq\ each other. 
Mathematically, this property translates into the 
rapid off-diagonal decay in the density matrix. 
The fast fall-off in the density matrix entries has been often
assumed without proof, or proved only in special cases.
Moreover,
the precise dependence of the rate of decay on properties of
the system (such as the band gap in insulators
or the  temperature in metallic systems) 
has been the subject of much discussion.

The main goal of this paper is to provide a rigorous mathematical
foundation for linear scaling methods in electronic structure
computations. We do this by
deriving estimates,
in the form of decay bounds, for the entries of 
general density
matrices for insulators, and for metallic systems at positive
electronic temperatures. We also address the question of the
dependence of the rate of decay on the band gap and on the 
temperature. Although immediately susceptible of physical interpretation,
our treatment is purely mathematical. By stripping
the problem down to its essential features and working
at the discrete level, we are 
able to develop an abstract
theory covering nearly all types of systems
and discretizations encountered in actual electronic
structure problems.

Our results are based on a general theory
of decay for the entries in analytic functions of sparse matrices,
initially proposed in \cite{benzigolub,benzirazouk,nadersthesis} 
and further 
developed here. The theory is based on classical approximation
theory and matrix analysis. A bit of functional analysis will
be used when considering a simple model of \lq\lq metallic
behavior,\rq\rq\ for which the decay in the density matrix
is very slow.

The approach described in this paper has a number of potential
applications beyond electronic structure computations, and
can be applied to any problem involving functions of large
matrices where \lq\lq locality of interaction\rq\rq\ plays
a role.  
Towards the end of the paper we
briefly review the possible use of decay bounds
in the study of correlations in quantum statistical mechanics and
information theory, 
in the analysis of complex networks, and in some problems in
classical numerical linear algebra, like the computation of invariant
subspaces of symmetric tridiagonal matrices. The discussion of
these topics will be
necessarily brief, but we hope it will stimulate further work
in these areas.

In this paper we are mostly concerned with the theory behind
$O(n)$ methods rather than with specific algorithms.
Readers who are interested in the computational aspects
should consult any of the many recent
surveys on algorithms for electronic structure computations;
among these, \cite{bowler2,niklasson,RRS11,saad} are especially 
recommended.

The remainder of the paper is organized as follows.
Section \ref{materials} provides some background on
electronic structure theory. The formulation of the
electronic structure problem in terms of spectral projectors
is reviewed in section \ref{sec:DM}.
A survey of previous, related work on decay estimates
for density matrices is given in
section \ref{related}.
In section \ref{norm} we formulate our basic assumptions
on the matrices (discrete Hamiltonians) considered in this paper,
particularly their normalization and asymptotic behavior
for increasing system size ($n\to \infty$). 
The approximation (truncation) of matrices with decay properties
is discussed in section \ref{trunc}.
A few general properties of orthogonal projectors are
established in section \ref{proj}. 
The core of the paper
is represented by section \ref{sec_main}, where various
types of decay bounds for spectral projectors are stated and proved.
In section \ref{overlap} we discuss the transformation
to an orthonormal basis set.
The case of vanishing gap is discussed in section \ref{metals}.
Other applications of our results and methods
are mentioned in section \ref{other}.
Finally, concluding remarks and some open problems
are given in section \ref{concl}.

\section{Background on electronic structure theories}\label{materials}
In this section we briefly discuss the basic principles underlying
electronic structure theory. For additional details the reader is
referred to, e.g., \cite{burke,LeBris,march,martin,saad,SO}. 

Consider a physical system formed by a number
of nuclei and $n_e$ electrons in three-dimensional (3D) space.
The
time-independent Schr\"odinger equation for the system is the eigenvalue problem
\begin{equation}\label{schrodinger1}
{\mathcal H}_{\rm tot} \Psi_{\rm tot}=E_{\rm tot}\Psi_{\rm tot},
\end{equation}
where $\mathcal{H}_{\rm tot} $ is the many-body Hamiltonian operator,  
$E_{\rm tot} $ is the total energy and
the functions $\Psi_{\rm tot}$ are the eigenstates of the system.

The Born--Oppenheimer approximation allows us to separate the 
nuclear and electronic coordinates.
As a consequence, we only seek to solve the quantum mechanical 
problem for the electrons, considering the nuclei as sources of 
external potential. Then the electronic part of equation 
\eqref{schrodinger1} can be written as
\begin{equation}\label{schrodinger2}
{\mathcal H} \Psi=E\Psi,
\end{equation}
where $E$ is the electronic energy and the eigenstates $\Psi$ 
are functions of $3n_e$ spatial coordinates and $n_e$ (discrete) spin coordinates.

We denote spatial coordinates as ${\bf r}$ and the spin 
coordinate as $\sigma$; each electron is then defined by 
$3+1$ coordinates ${\bf x}_i=\left(\begin{array}{c}{\bf r}_i\\ 
\sigma_i\end{array}\right)$, and wavefunctions  
are denoted as  $\Psi({\bf x}_1,\dots,{\bf x}_{n_e})$.
Then the electronic Hamiltonian operator in \eqref{schrodinger2} 
can be written as
$$
{\mathcal H} = T + V_{\rm ext} + V_{\rm ee},
$$
where
$T=-\frac{1}{2}\nabla^2$ is the kinetic energy, $V_{\rm ext}$ is 
the external potential (i.e., the potential due to the nuclei) 
and $V_{\rm ee}=\frac{1}{2}\sum_{i\neq j}^{n_e}\frac{1}{|{\bf r}_i-{\bf
r}_j|}$ is the potential due to the electron-electron 
repulsion.\footnote{As is customary in physics, we use here 
atomic units, that is, $e^2=\hbar=m=1$, with $e=$ electronic 
charge, $\hbar=$ reduced Planck's constant and $m=$ electronic mass. }
Moreover, the ground-state energy  is given by
$$
E_0=\min_\Psi \langle\Psi|{\mathcal H}|\Psi\rangle,
$$
where the minimum is taken over all the normalized antisymmetric 
wavefunctions (electrons being Fermions, their wavefunction 
is antisymmetric). 
The electronic density is defined as
$$\rho({\bf r})=n_e\sum_\sigma \int d{\bf x}_2\dots\int d{\bf x}_{n_e}|
\Psi({\bf r},\sigma, {\bf x}_2,\dots,{\bf x}_{n_e})|^2.
$$
In this expression, the sum over $\sigma$ is the sum over the
spin values of the first electron, 
while integration with respect to ${\bf x}_i$, 
with $2\le i\le n_e$, denotes the
integral over $\reals^3$ and sum over 
both possible spin values for the $i$th electron.

Observe that \eqref{schrodinger2} is a many-particle equation 
that cannot be separated into several one-particle equation 
because of the term $V_{\rm ee}$. Of course, being able to turn 
\eqref{schrodinger2} into a separable equation would simplify 
the problem considerably, since the number of
unknowns per equation would drop from $3n_e+n_e$ to $3+1$. 
This is the motivation for one-electron methods.

 For non-interacting particles, the many-body eigenstates 
$\Psi({\bf x}_1,\dots,{\bf x}_{n_e})$ can be written as
 Slater determinants of occupied orbitals $\phi_1({\bf x}_1),
\dots,\phi_{n_e}({\bf x}_{n_e})$, 
$$\Psi({\bf x}_1,\dots,{\bf x}_{n_e}) = \frac{1}{\sqrt{n_e!}}
\left |\begin{matrix} \phi_1 ({\bf x}_1) & \ldots & \phi_{n_e} ({\bf x}_1) \cr
                      \vdots             & \ddots &  \vdots \cr
                      \phi_1({\bf x}_{n_e}) & \ldots & \phi_{n_e}({\bf x}_{n_e})\cr
       \end{matrix} \right |,
$$
where each orbital satisfies a single-particle eigenstate 
equation ${\mathcal H}_i \phi_i=E_i\phi_i$. 
In general, the name \lq\lq one-particle method\rq\rq\  is used also 
when self-consistent terms (e.g., involving the density) 
are present in ${\cal H}_i$; in this case, the equations are solved 
iteratively, computing at each step the solution to a single-particle 
problem and then filling the lowest eigenstates with one 
electron each, to form a Slater determinant. However, some of 
the properties of a true non-interacting system (such as the 
fact that the energy is the sum of the eigenvalues of 
occupied states) are lost.

\hspace{-0.05in} A fundamental example of one-particle method is density 
functional theory (DFT). The main idea 
behind DFT consists in rewriting the ground-state energy as 
a density functional rather than a wavefunction functional. 
Indeed, the first Hohenberg--Kohn theorem \cite{HK64} 
states that the potential is uniquely
(up to a constant) determined by the ground-state density 
$\rho({\bf r})$. In other words, the system can be seen as 
characterized by the density rather than by the potential. 
Moreover, the ground-state density of a system with given 
external potential can be computed by minimizing a suitable 
energy functional of $\rho$ (second Hohenberg--Kohn theorem).

While of crucial theoretical importance, though, these results 
do not give a recipe for computing
electronic structures. The next important step comes with 
the Kohn--Sham construction \cite{KS65}: roughly speaking, one replaces 
the original, non-separable system with a fictitious system 
of non-interacting
electrons that have exactly the same density as the original 
system. The single-particle equations for the 
Kohn--Sham system are (neglecting spin):
$$
\left(-\frac12\nabla^2+V({\bf r})\right)\psi_i({\bf r})=\varepsilon_i\psi_i({\bf r}),
$$
where the $\psi_i$'s are the Kohn--Sham orbitals and $V({\bf r})$ 
is the single-electron potential. The associated density is
$$
\rho ({\bf r})=\sum_{i=1}^{n_e}|\psi_i({\bf r})|^2.
$$
The single-particle potential $V({\bf r})$ can be written as
$$
V({\bf r})=V_{\rm ext}({\bf r})+\int_{\reals^3} \frac{\rho({\bf r})}
{|{\bf r}-{\bf r'}|}\,d{\bf r}' +V_{xc}[\rho]({\bf r}),$$
where the term $V_{xc}[\rho]({\bf r})$ is called {\em exchange-correlation 
potential} and depends on the density.
It is important to point out that the Kohn--Sham construction 
is not an approximation, in that the
Kohn--Sham equations are exact and yield the exact density.

On the other hand, the exchange-correlation energy is not known 
in practice and needs to be approximated. In the local density 
approximation (LDA) framework, for instance, the exchange energy 
is based on the energy of a uniform electron gas. Introducing 
spin allows for a more refined approximation (LSDA, or local 
spin-density approximation). One may also include gradient corrections, thus
obtaining the so-called generalized gradient approximation (GGA).

The solution of Kohn--Sham equations is usually computed 
via self-consistent iterations. The iterative process begins 
with an approximation for the density; the associated approximate 
exchange-correlation potential is injected in the Kohn--Sham 
equations. The output density is then used to form
a new approximation of the potential. The process continues 
until the update term for the density
or the potential becomes negligible. Observe that the basic 
building block of this computational technique is the solution 
of an eigenvalue problem for non-interacting particles.

Electrons at the lowest atomic-like levels (`core' electrons) do not change much 
their state within chemical processes. For this reason, many 
computational techniques do not consider them explicitly, and replace 
instead the  Coulomb attraction of the nucleus with a potential 
(called pseudopotential) that includes the effect of the core electrons 
on the valence electrons. This approach is always employed when 
using plane waves as a basis for wavefunctions, since the number 
of plane waves required to represent core electrons is prohibitive.

\section{Density matrices}\label{sec:DM} 
As mentioned earlier, 
conventional methods for electronic structure 
calculations 
require the repeated solution of linear eigenvalue problems 
for a one-electron Hamiltonian operator
of the form $\mathcal{H}=-\frac{1}{2}\nabla^2+V({\bf r})$. 
In practice, operators are discretized by grid methods 
or via Galerkin projection
onto the finite-dimensional subspace spanned by a set of 
basis functions $\{\phi_i\}_{i=1}^n$. When linear combinations
of atom-centered Slater or Gaussian-type functions (see below)
are employed,
the total number of basis functions is
$n\approx n_b\cdot n_e$, where
$n_e$ is the number of (valence) electrons in the system and $n_b$ 
is a small or moderate integer related to the 
number of basis functions per atom. 
Traditional electronic structure algorithms diagonalize 
the discrete Hamiltonian 
resulting in algorithms with $O(n_e^3)$ (equivalently,
$O(n^3)$)  operation count
\cite{LeBris,martin,saad}. In these approaches, a sequence of
generalized eigenproblems of the form
\begin{equation}\label{gen_eig}
H\psi_i = \varepsilon_i S\psi_i, \quad 1\le i\le n_e,
\end{equation}
is solved, where $H$ and $S$ are, respectively,
the discrete Hamiltonian and the overlap matrix relative
to the basis set $\{\phi_i\}_{i=1}^n$. 
 The eigenvectors $\psi_i$ in \eqref{gen_eig}
are known as the {\em occupied states}, and correspond
to the $n_e$ lowest generalized eigenvalues   
$\varepsilon_1\leq \cdots \leq \varepsilon_{n_e}$, the
{\em occupied levels}. 
 The overlap matrix $S$ is
just the Gram matrix associated with the basis set: $S_{ij} = \langle \phi_j,
\phi_i\rangle$ for all $i,j$, where 
$\langle \cdot,\cdot \rangle$
denotes the standard $L^2$-inner product. In Dirac's {\em bra-ket} notation,
which is the preferred one in the physics and
chemistry literature, one writes $S_{ij} = \langle \phi_i |
\phi_j \rangle$.  
For an
orthonormal basis set, $S=I_n$ (the $n\times n$ identity matrix)
and the eigenvalue problem \eqref{gen_eig} is a standard one.

Instead of explicitly 
diagonalizing the discretized Hamiltonian $H$, one may reformulate
the problem in terms of the density operator $P$, which 
is the $S$-orthogonal projector\footnote{That is, orthogonal with respect
to the inner product associated with $S$.} onto the $H$-invariant subspace 
corresponding
to the occupied states, that is, the subspace
spanned by the $n_e$ eigenvectors $\psi_i$ in 
\eqref{gen_eig}. Virtually all quantities of interest in 
electronic structure theory can be computed as functionals of
the density matrix $P$; see, e.g., \cite{challacombe,NS00,niklasson}.
It is this reformulation of the problem that allows for the 
development
of potentially more efficient algorithms for electronic structure, including
algorithms that asymptotically require only $O(n_e)$ 
(equivalently, $O(n)$) arithmetic operations
and storage. Most current methodologies, 
including Hartree--Fock, Density Functional Theory (e.g., Kohn--Sham),
and hybrid schemes (like BLYP) involve  
self-consistent field (SCF) iterations, in which
the density matrix $P$ must be computed at each SCF step, 
typically with increasing accuracy as the outer iteration
converges; see, e.g., \cite{LeBris,yang}.

As stated in section \ref{sec:Introduction},
in this paper we use some classical results from
polynomial approximation theory and matrix analysis to
provide a mathematical foundation for linear scaling
electronic structure calculations for a very broad class
of systems.
We assume that the basis functions $\phi_i$ are localized, i.e.,
decay rapidly outside of a small region. Many of the most popular
basis sets used in quantum chemistry, such as 
Gaussian-type orbitals (GTO), which are functions of the form 
$$\phi\, (x,y,x) = C\,x^{n_x} y^{n_y} z^{n_z}{\rm e}^{-\alpha r^2}\,,$$
where $C$ is a normalization constant,
satisfy this requirement \cite{LeBris}.
For systems with sufficient separation between atoms, 
this property implies a fast off-diagonal decay of the entries 
of the Hamiltonian matrix; moreover, a larger distance 
between atoms corresponds to a faster decay of matrix entries 
\cite[page 381]{LeBris}. 
If the entries that fall below a given (small) truncation 
tolerance are set to zero, the 
Hamiltonian turns out to be a sparse matrix.

Decay results are especially easy to
state in the banded case,\footnote{A square matrix
$A=\left (A_{ij}\right )$ is said to be $m$-banded if
$A_{ij}=0$ whenever $|i-j|>m$; for instance, a tridiagonal
matrix is $1$-banded according to this definition.} 
but more general sparsity 
patterns will be
taken into account as well. 

We can also assume from the outset that the basis functions 
form an orthonormal set. If this is not the case, 
we perform a congruence
transformation to an orthogonal basis and replace the original 
Hamiltonian $H$ with $\tilde{H}=Z^THZ$, where $S^{-1}=ZZ^T$ is 
either the L\"owdin ($Z=S^{-1/2}$, see \cite{lowdin}) or the 
inverse Cholesky ($Z=L^{-T}$, with $S=LL^T$)
factor of the overlap matrix $S$; see, e.g., \cite{challacombe}. 
Here $Z^T$ denotes the transpose of $Z$; for 
the L\"owdin factorization, $Z$ is symmetric ($Z=Z^T$).
Up to truncation, the transformed matrix
$\tilde{H}$ is still a banded (or sparse) matrix, albeit 
not as sparse as $H$. Hence, in our decay results
we can replace $H$ with $\tilde{H}$. 
The entries in $S^{-1}$, and therefore those in $Z$, decay
at a rate which depends on the conditioning of $S$. This,
in turns, will depend on the particular basis set used, on
the total number of basis functions, and on the inter-atomic
distances, with larger separations leading to faster decay.   
This is further
discussed in section \ref{overlap} below.
We note that the case of tight-binding
Hamiltonians is covered by our theory. 
Indeed, the tight-binding method consists in expanding the states
of the physical system (e.g., a crystal) in linear combinations of
atomic orbitals of the composing atoms; such an approximation is
successful if the atomic orbitals have
little overlap, which translates to 
a sparse Hamiltonian.
The same applies
to `real space' finite difference (or finite element) 
approximations \cite{saad}. 

For a given sparse discrete Hamiltonian $H$ in
an orthonormal basis, we consider the problem of  
approximating
the zero-temperature density matrix associated with $H$, that is,
the spectral 
projector $P$ onto the occupied subspace spanned by the eigenvectors
corresponding to the smallest $n_e$  
eigenvalues of $H$:
$$
P=\psi_1\otimes\psi_1+\dots+\psi_{n_e}\otimes\psi_{n_e}
\equiv 
|\psi_1\rangle \langle \psi_1| +\dots + |\psi_{n_e}\rangle \langle \psi_{n_e}|,
$$
where $H\psi_i=\varepsilon_i\psi_i$ for $i=1,\dots,n_e$.
Clearly, $P$ is Hermitian and idempotent: $P=P^* = P^2$.
Consider now the Heaviside (step) function
\begin{displaymath}
h(x)=\left\{
\begin{array}{c}
1\qquad {\rm if}\quad x<\mu\\
\frac{1}{2}\qquad {\rm if}\quad x=\mu \\
0\qquad {\rm if}\quad x>\mu
\end{array}\right.
\end{displaymath}
where the number $\mu$ (sometimes called the {\em Fermi level} or {\em chemical
potential}, see \cite{goedecker1}), is such that 
$\varepsilon_{n_e}<\mu<\varepsilon_{n_e+1}$. 
If the {\em spectral gap} 
$\gamma=\varepsilon_{n_e+1}-\varepsilon_{n_e}$, also known as the
HOMO-LUMO gap,\footnote{HOMO = Highest Occupied Molecular 
Orbital; LUMO = Lowest Unoccupied Molecular Orbital.} is not too small, the step function  
$h$ is well approximated by the Fermi--Dirac function\footnote{Several other 
analytic approximations to the step function are known, some of which
are preferable to the Fermi--Dirac function from the computational
point of view; see, e.g., \cite{baerhg3} for a comparative
study. For theoretical analysis,
however, we find it convenient to work with the
Fermi--Dirac function.}
$f_{FD}(x)=1/(1+{\rm e}^{\beta (x-\mu)})$
for suitable values of $\beta >0$: 
$$P=h(H)\approx f_{FD}(H)= \left [I_n + {\rm exp}(\beta (H - \mu I_n))\right ]^{-1}.$$ 
The smaller $\gamma$,
the larger $\beta$ must be taken in order to have a
good approximation: see Fig.~\ref{betagamma}. The parameter $\beta$ can be 
interpreted as an (artificial) inverse temperature; the
zero-temperature limit is quickly approached as $\beta \to \infty$. 
A major advantage of the Fermi--Dirac
function is that it is analytic; hence, we can replace
$h$ with $f_{FD}$ and apply to it a wealth of results from 
approximation theory
for analytic functions. 

We emphasize that the study of the zero-temperature limit -- that is, 
the ground state of the system --  is of fundamental importance in 
electronic structure theory. In the words of \cite[Chapter 2, pp.~11-12]{martin}: 

\medskip

\begin{quote}
...the lowest energy ground state of the electrons determines 
the structure and low-energy motions of the nuclei. The vast
 array of forms of matter -- from the hardest material known, 
diamond carbon, to the soft lubricant, graphite carbon, 
to the many complex crystals and molecules formed by the elements 
of the periodic table -- are largely manifestations of the 
ground state of the electrons.
\end{quote}

\medskip

The  Fermi--Dirac distribution
is also used when dealing with systems at positive
electronic temperatures ($T>0$) with a small or 
null gap (e.g., metallic systems); 
in this
case $\beta = (k_B T)^{-1}$, where
$k_B $ is Boltzmann's constant. In particular, 
use of the Fermi--Dirac function
allows one to compute thermodynamical properties 
(such as the specific heat) and the $T$-dependence of 
quantities from first principles. In this case, of course,
the matrix $P=f_{FD}(H)$ is no longer an orthogonal
projector, not even approximately.

We mention in passing that it is sometimes advantageous to
impose the normalization condition ${\rm Tr}(P) = 1$ on the
density matrix; indeed, such a condition is standard and part
of the definition of density matrix in the quantum mechanics
literature, beginning with von Neumann \cite{vn,VonN}.
At zero temperature we have ${\rm Tr}(P)={\rm rank}(P) = n_e$,
and $P$ is replaced by
$\frac{1}{n_e}P$. With this normalization $P$ is no
longer idempotent, except when $n_e = 1$. In this paper 
we do not make use of such normalization.

The localization (\lq\lq pseudo-sparsity\rq\rq) of the 
density matrix for insulators has been long known to
physicists and chemists; see the literature review in the
following section. A number of authors have exploited
this property to develop a host of linear scaling algorithms 
for electronic structure computations; see, e.g.,
\cite{baerhg,baerhg2,bowler2,CKP08,challacombe,goedecker1,goedeckercolombo,kohn,LNV,baerhg3,martin,niklasson,ordejon,ordejon2,RRS11,scuseria,wu}. 
In this paper we derive explicitly computable decay bounds which 
can be used, at least in principle, to 
determine {\em a priori} the bandwidth or sparsity pattern of the
truncation of the density matrix corresponding to a prescribed error. 
As we shall see, however, our decay estimates tend to be  
conservative and may be pessimistic in practice. Hence,
we regard our results primarily as a theoretical
contribution, providing a rigorous (yet elementary) mathematical
justification for some important localization phenomena 
observed by physicists.
An important aspect of our work is that
our bounds are universal, in the sense that they 
only depend on the bandwidth (or sparsity pattern) of the
discrete Hamiltonian $H$, on the smallest and largest eigenvalues
of $H$, on the gap $\gamma$ and, when relevant, 
on the temperature $T$. 
In particular, our results are valid for a wide range
of basis sets and indeed for different discretizations
and representations
of the Hamiltonian.

\section{Related work}\label{related}
The localization properties of spectral projectors 
(more generally, density matrices) 
associated with electronic structure computations
in quantum chemistry and solid state
physics have
been the subject of a large number of papers. 
Roughly speaking, the results found in the literature fall into three 
broad categories:

\begin{enumerate}
\item Fully rigorous mathematical results
for model systems (some quite general);
\item \lq\lq Semi-rigorous\rq\rq\ results for specific 
systems; these results are often characterized as \lq\lq exact\rq\rq, or
\lq\lq analytical\rq\rq\ by the authors (usually phsyicists), but would not
be recognized as mathematically rigorous by mathematicians;
\item Non-rigorous results based on a mixture of heuristics,
physical reasoning, and numerics.
\end{enumerate}

Contributions in the first group are typically due to 
researchers working in solid state and mathematical physics. 
These include
the pioneering works of Kohn \cite{kohn59} and des Cloizeaux \cite{desC},
and the more recent papers by Nenciu \cite{Nenciu}, Brouder 
et al.~\cite{brouder}, and a group of papers by Prodan, Kohn, and 
collaborators 
\cite{prodan1,prodan2,PK05}. 

Before summarizing the content of these contributions, we
should mention that nearly all the results found in the literature 
are expressed at the continuous level, that is, in terms of
decay in {\em functions} rather than decay in {\em matrices}. 
The functions
are typically functions of (real) space; results are
often formulated in terms of the {\em density
kernel}, but sometimes in terms of the {\em Wannier functions}.
The latter form an orthonormal basis set associated with a
broad class of Hamiltonians, and are widely used in
solid state physics. 
Since the Wannier functions
span the occupied subspace, localization results for the  
Wannier functions immediately imply similar localization results
for the corresponding spectral projector. 
Note, however, that the spectral projector may be exponentially localized
even when the Wannier functions are not.

At the continuous level, the density matrix 
$\rho:\reals^d \times \reals^d \longrightarrow \complex$ is the
kernel of the density operator $\cal P$ defined by 
$$({\cal P}\psi)({\bf r}) = \int_{\reals^d}\rho({\bf r}, {\bf r}')\psi ({\bf r}')d{\bf r}'\,,$$
regarded as an integral
operator on $L^2(\reals^d)$; here $d=1,2,3$.
The vectors ${\bf r}$ and
${\bf r}'$ represent any two points in $\reals^d$, and
$|{\bf r} - {\bf r}'|$ is their (Euclidean) distance.
The density kernel can be expressed as
$$\rho ({\bf r}, {\bf r}')= \sum_{i=1}^{n_e} \psi_i({\bf r})\psi_i({\bf r}')^*\,,$$
where now $\psi_i$ is the (normalized) eigenfunction of the 
Hamiltonian operator
$\cal H$ corresponding to the $i$th lowest eigenvalue, $i=1,\ldots ,n_e$, 
and the asterisk denotes complex conjugation; see. e.g., \cite{march}.
The density operator $\cal P$ admits the Dunford integral representation
\begin{equation}\label{cauchy}
{\cal P} = \frac{1}{2\pi \iu}\int_\Gamma (zI - {\cal H})^{-1} \, dz\,,
\end{equation}
where $\Gamma$ is a simple
closed contour in $\complex$ surrounding the eigenvalues of 
$\cal H$ corresponding
to the occupied states, with the remaining eigenvalues on the outside. 

In \cite{kohn59}, Kohn proved the rapid decay of
the Wannier functions
for one-dimensional, one-particle Schr\"odinger operators
with periodic and symmetric potentials with non-intersecting 
energy bands. 
This type of Hamiltonian describes one-dimensional, 
centrosymmetric crystals.
Kohn's main result takes the following form:
\begin{equation}\label{as_decay}
\lim_{x\to \infty} w(x)\,{\rm e}^{qx} = 0\,,
\end{equation}
where $w(x)$ denotes a Wannier function (here $x$ is the
distance from the center of symmetry)
and $q$ is a suitable
positive constant. 
In the same paper (page 820) Kohn also points out that for free
electrons (not covered by his theory, which
deals only with insulators) the decay is very
slow, being like $x^{-1}$.

A few observations are in order: first, the
decay result (\ref{as_decay}) is asymptotic, that is,
it implies fast decay at sufficiently large distances $|x|$ only. 
Second, (\ref{as_decay}) is consistent not only with strict 
exponential decay, but
also with decay of the form $x^p{\rm e}^{-q'x}$ where $p$ is arbitrary 
(positive or negative) and $q'>q$. Hence, the actual decay could
be faster, but also slower, than exponential. Since the result
in (\ref{as_decay}) provides only an estimate (rather than an 
upper bound) for the density matrix in real space, it is
not easy to use in actual calculations. To be fair, 
such practical aspects were
not discussed by Kohn until much later (see, e.g., \cite{kohn}). 
Also, later work showed that the asymptotic regime is achieved 
already for distances of the order of 1-2 lattice constants, and
helped clarify the form of the power-law prefactor, as
discussed below. 

The techniques used by Kohn, mostly the theory of analytic functions 
in one complex variable
and some classical asymptotics for linear second-order differential
operators with variable coefficients, did not lend themselves
naturally to the treatment of 
higher dimensionl cases or more complicated potentials. 
The problem of the validity of Kohn's results in two and three dimensions
has remained open for a very long time, and has been long regarded
as one of the last outstanding problems of one-particle condensed-matter
physics. Partial results were obtained by des Cloizeaux \cite{desC}  
and much later by Nenciu \cite{Nenciu}. Des Cloizeaux, who studied
both the decay of the Wannier functions and that of the associated
spectral projectors, extended Kohn's localization results
to 3D insulators
with a center of inversion (a specific symmetry requirement)
in the special case of simple, isolated (i.e., nondegenerate) 
energy bands; 
he also treated the tight-binding limit for arbitrary crystals. Nenciu 
further generalized Kohn's results
to arbitrary $d$-dimensional insulators, again limited
to the case of simple bands.

The next breakthrough came much more recently, when Brouder 
et al.~\cite{brouder} managed to prove localization of the 
Wannier functions for a broad class of insulators in arbitrary
dimensions. The potentials considered by these authors are
sufficiently general for the results to be directly applicable
to DFT, both within the LDA and the GGA frameworks.
 The results in \cite{brouder}, however, also prove that
for {\em Chern insulators} (i.e., 
insulators for which the {\em Chern invariants}, which 
characterize the band structure, are non-vanishing) the Wannier 
functions do {\em not} decay exponentially, therefore leaving open the
question of proving the decay of the density matrix in this case \cite{TV06}. 
It should be mentioned that the mathematics in \cite{brouder}
is fairly sophisticated, and requires some knowledge
of modern differential geometry and topology.

Further papers of interest include the work by Prodan, Kohn, and
collaborators \cite{prodan1,prodan2,PK05}. 
From the mathematical standpoint, the most satisfactory
results are perhaps those presented in \cite{prodan2}. In this
paper, the authors use norm estimates for complex symmetric
operators in Hilbert space to obtain sharp
exponential decay estimates for the 
resolvents of rather general
Hamiltonians with spectral gap. 
Using the contour integral
representation formula (\ref{cauchy}), these estimates yield 
(for sufficiently large separations)
exponential spatial decay bounds of the form
\begin{equation}\label{density_decay_ins}
|\rho ({\bf r}, {\bf r}')| \le C\,{\rm e}^{-\alpha |{\bf r} - {\bf r}'|} \quad
 (C>0, \,\, \alpha > 0, \,\,\,{\rm const.})
\end{equation}
for a broad class of insulators.
A lower bound on the decay rate  $\alpha$ (also known as the {\em decay length}
or {\em inverse correlation length}) is derived,
and the behavior of $\alpha$
as a function of the spectral gap $\gamma$ is examined.

Among the papers in the second group, we mention
\cite{goe,hevan,IBA,JK,maslen,tar1,tar2}. These papers
provide quantitative decay estimates for the density
matrix, either based on fairly rigorous analyses of special
cases, or on not fully rigorous discussions of general
situations. Large use is made of approximations, 
asymptotics, heuristics and physically motivated assumptions, and the
results are often validated by numerical calculations. 
Also, it is occasionally stated that while the results were
derived in the case of simplified models, the conclusions
should be valid in general. Several of these authors emphasize
the difficulty of obtaining rigorous results for general
systems in arbitrary dimension. In spite of not being fully
rigorous from a mathematical point of view, these
contributions are certainly very valuable and seem to have been
broadly accepted by physicists and chemists. We note, however,
that the results in these papers usually take the form of
order-of-magnitude estimates for the density
matrix $\rho ({\bf r}, {\bf r}')$ in real space, valid for sufficiently
large separations $|{\bf r} - {\bf r}'|$, rather than
strict upper bounds. As said before of Kohn's results, this type
of estimates may be difficult to use for computational
purposes.  

In the case of insulators,
the asymptotic decay estimates in these papers take the form
\begin{equation}\label{dec_insul}
 \rho ({\bf r},{\bf r}') = C\, \frac{{\rm e}^{-\alpha |{\bf r}-{\bf r}'|}}
{|{\bf r}-{\bf r}'|^{\sigma}} \,, \quad |{\bf r} - {\bf r}'| \to \infty
\quad (\alpha >0 \,\,, \sigma > 0, \,\,\,{\rm const.})\,,
\end{equation}
where higher order terms have been neglected. 
Many of these papers concern the precise form of the power-law
factor (i.e., the value of $\sigma$) in both insulators 
and metallic systems. 
The actual functional
dependence of $\alpha$ on the gap 
and of $\sigma$ on the dimensionality of the problem
have been the subject of
intense discussion, with some authors claiming that $\alpha$
is proportional to $\gamma$, and others finding it to be
proportional to $\sqrt{\gamma}$; see, e.g.,
\cite{goedecker1,IBA,JK,maslen,tar1,tar2} and
section \ref{dep_gap} below. It appears that
both types of behavior can occur in practice. For instance,
in \cite{JK} the authors provide a tight-binding model of an insulator
for which the density falls off exponentially with decay
length $\alpha= O(\gamma)$
in the diagonal direction of the lattice, and
$\alpha = O(\sqrt{\gamma})$ in non-diagonal directions, as $\gamma \to 0+$.
We also note that in \cite{JK}, the decay behavior of the density
matrix for an insulator is found to be given (up to higher
order terms) by 
$$\rho ({\bf r},{\bf r}') =C\,\frac{{\rm e}^{-\alpha |{\bf r}-{\bf r}'|}}
{|{\bf r}-{\bf r}'|^{d/2}}, \quad |{\bf r}-{\bf r}'|\to \infty \,,$$
where $d$ is the dimensionality of the problem.
In practice, the power-law factor in the denominator
is often ignored, since the exponential decay dominates.

In \cite{goe},
Goedecker argued that the density matrix for 
$d$-dimensional ($d=1,2,3$) metallic
systems at electronic temperature $T>0$ behaves 
to leading order like
\begin{equation}\label{density_decay_met}
\rho ({\bf r}, {\bf r}')=C\,\frac{\cos\,(|{\bf r} - {\bf r}'|)}
{|{\bf r} - {\bf r}'|^{(d+1)/2}} 
\, {\rm e}^{-k_BT|{\bf r} - {\bf r}'|}, 
\quad |{\bf r} - {\bf r}' | \to \infty \,.
\end{equation}
Note that in the zero-temperature limit, a power-law
decay (with oscillations) is observed. An analogous result was
also obtained in \cite{IBA}. Note that
the decay length in the exponential goes to zero like 
the temperature $T$ rather than like $\sqrt{T}$, as
claimed for instance in \cite{baer}. We will return on this
topic in section \ref{dep_T}.

Finally, as representatives of the third group of papers
we select \cite{baer} and \cite{zhang}. The authors of \cite{baer}
use the Fermi--Dirac approximation of the density matrix and
consider its expansion in the Chebyshev basis. From an estimate of
the rate of decay of the coefficients of the Chebyshev expansion
of $f_{FD}(x)$, they obtain estimates for the number of terms
needed to satisfy a prescribed error in the
approximation of the density matrix. In turn, this yields
estimates for the rate of decay as a function of the extreme
eigenvalues and spectral gap of the discrete Hamiltonian. Because
of some {\em ad hoc} assumptions and the many approximations used the 
arguments in this paper cannot be considered  mathematically
rigorous, and the estimates thus obtained are not
always accurate. Nevertheless, the idea of using a polynomial 
approximation for the Fermi--Dirac function and the observation that
exponential decay of the expansion coefficients implies exponential
decay in the (approximate) density matrix is quite valuable and,
as we show in this paper, can be made fully rigorous. 

Finally, in \cite{zhang} the authors present the results of
numerical calculations for various insulators in order to gain
some insight on the dependence of the decay length on the gap.
Their experiments confirm that the decay behavior of 
$\rho({\bf r}, {\bf r}')$
can be strongly anisotropic, and that different rates of decay
may occur in different directions; this is consistent
with the analytical results in \cite{JK}.  


Despite this considerable body of work, the localization
question for density matrices cannot be regarded as
completely settled from the mathematical standpoint. 
We are not aware of any completely general and
rigorous mathematical treatment of the decay properties  in
density matrices associated with general (localized) Hamiltonians,
covering all systems with gap as well as metallic systems at
positive temperature.
Moreover, rather than order-of-magnitude estimates, actual upper bounds
would be more satisfactory.

Also, almost all the above-mentioned results concern the continuous,
infinite-dimensional case. In practice, of course, calculations
are performed on discrete, $n$-dimensional approximations $H$
and $P$ to the operators $\cal H$ and $\cal P$.
The replacement of density operators with finite density matrices can be obtained 
via the introduction of a system of $n$
basis functions $\{\phi_i\}_{i=1}^n$, leading to the density
matrix $P=(P_{ij})$ with 
\begin{equation}\label{density_matrix}
P_{ij}=\langle \phi_j, {\cal P}\phi_i\rangle = \langle \phi_i|{\cal P}|\phi_j\rangle = 
\int_{\reals^d}\int_{\reals^d}
\rho ({\bf r},{\bf r}')\phi_i({\bf r})^*\phi_j({\bf r}')d{\bf r}d{\bf r}'\,. 
\end{equation}
As long as the basis functions are localized in space, the decay
behavior of the density function $\rho ({\bf r},{\bf r}')$
for increasing spatial separation $| {\bf r} - {\bf r}'| $
is reflected in the decay behavior of the matrix elements $P_{ij}$ away from the main
diagonal (i.e., for $|i-j|$ increasing) or, more generally, for increasing distance
$d(i,j)$ in the graph associated with the discrete Hamiltonian;
see section \ref{trunc} for details. 

In developing and analyzing $O(n)$ methods for electronic structure computations, 
it is important to rigorously establish decay bounds for the
entries of the density matrices that take into account properties
of the discrete Hamiltonians. 
It is in principle possible to obtain decay estimates for
finite-dimensional approximations using localized
basis functions from the spatial decay estimates for
the density kernel. Note, however,
that any estimates obtained inserting (\ref{density_decay_ins}) 
or (\ref{density_decay_met})
into (\ref{density_matrix}) would depend on the particular
set of basis functions used.

In this paper we take a different approach. Instead of starting
with the continuous problem and discretizing it, we establish our
estimates directly for sequences of matrices of finite, but increasing
order.
We believe that this approach is closer to the practice of
electronic structure calculations, where matrices are the primary
computational objects. 

 We impose a minimal set of assumptions on our matrix
sequences so as to reproduce the main features of problems encountered
in actual electronic structure computations, while at the 
same time ensuring a high degree of generality. 
Since our aim is to provide a rigorous and general mathematical
justification to the possibility of $O(n)$ methods, this approach
seems to be quite natural.\footnote{We refer the historically-minded reader
to the interesting discussion given by John von Neumann in \cite{vn1} 
on the benefits that can be expected from a study of the 
asymptotic properties of large matrices, in alternative to the
study of the infinite-dimensional (Hilbert space) case.} 

To put our work further into perspective, we quote from
two prominent researchers in the field of electronic structure, one a
mathematician, the other a physicist.
In his excellent survey \cite{LeBris} Claude Le Bris, discussing 
the basis for linear scaling algorithms, i.e., the assumed sparsity
of the density matrix, wrote (pages 402 and 404):

\medskip

\begin{quote}
The latter assumption is in some sense an {\em a posteriori} assumption,
and not easy to analyse [...] It is to be emphasized  that the 
numerical analyis of the linear scaling
methods overviewed above that would account for cut-off rules and locality
assumptions, is not yet available.
\end{quote}

\medskip

It is interesting to compare these statements with two earlier ones
by Stefan Goedecker. In \cite{goe_jcp} he wrote (page 261):

\medskip

\begin{quote}
To obtain a linear scaling, the extended orbitals [i.e., the
eigenfunctions of the one-particle Hamiltonian corresponding to
occupied states]
have to be replaced by the density matrix,
whose physical behavior can be exploited to obtain a fast algorithm. This last
point is essential. Mathematical and numerical analyses alone are not
sufficient to construct a linear algorithm. They have to be combined
with physical intuition.
\end{quote}

\medskip

A similar statement can be found in \cite{goedecker1}, page 1086:

\medskip

\begin{quote}
Even though $O(N)$ algorithms contain many aspects of mathematics and
computer science they have, nevertheless, deep roots in physics. Linear
scaling is not obtainable by purely mathematical tricks, but it is based
on an understanding of the concept of locality in quantum mechanics.
\end{quote}

\medskip

In the following we provide a general treatment of the
question of decay in spectral projectors that is as {\em a priori}
as possible, in the sense that it relies on a minimal set of assumptions
on the discrete Hamiltonians; furthermore, our theory is purely
mathematical, and therefore completely independent of any physical
interpretation. Nevertheless, our theory allows us to
shed light on questions like the dependence of the decay length
on the temperature in the density matrix for metals at $T>0$;
see section \ref{dep_T}. We do this using
for the most part fairly simple mathematical tools  
from classical approximation theory and
linear algebra. 

Of course, in the development of practical linear
scaling algorithms a deep knowledge of the physics involved
is extremely important; we think, however, that locality is as
much a {\em mathematical phenomenon} as a physical one. 

We hope that the increased level of generality attained in this paper
(relative to previous treatments in the physics literature) will also 
help in the
development of $O(n)$ methods for other types of problems where
spectral projectors and related matrix functions play a central
role. A few examples are discussed in section \ref{other}.

\section{Normalizations and scalings}\label{norm}

We will be dealing with sequences of matrices $\{H_n\}$ of increasing size. 
We assume that
each matrix $H_n$ is an Hermitian $n\times n$ matrix, where $n=n_b\cdot n_e$;
here $n_b$ is fixed, while $n_e$ is increasing. As explained in 
section \ref{sec:DM},
the motivation for this assumption is that in most electronic structure codes,
once a basis set has been selected
the number $n_b$ of basis functions per particle is fixed, and one is
interested in the scaling as $n_e$, the number of particles, increases.
Hence, the parameter that controls the system size is $n_e$. 
We also assume that the system
is contained in a $d$-dimensional box of volume $V=L^d$ and that
$L\to \infty$ as $n_e \to \infty$ in such a way that the average 
density $n_e/L^d$ remains constant ({\em thermodynamic limit}).
This is very
different from the case of finite element or finite difference approximations
to partial differential equations (PDEs), where the system (or domain)
size is considered
fixed while the number of basis functions increases or,
equivalently, the mesh size $h$ goes to zero. 

Our scaling assumption has very important consequences on the structural
and spectral properties
of the matrix sequence $\{H_n\}$; namely, the following properties hold:\\


\begin{enumerate}
\item The bandwidth of $H_n$, which reflects
the {\em interaction range} of the discrete Hamiltonians, remains bounded as
the system size increases \cite[page 454]{martin}. More generally, the 
entries of $H_n$ decay away from the main diagonal at a rate
independent of $n_e$ (hence, of $n$).
See section \ref{trunc} for precise definitions and generalizations.
\item The eigenvalue spectra
$\sigma (H_n)$ are also uniformly bounded as $n_e \to \infty$. 
In view of the previous property, this is equivalent to 
saying that the entries in $H_n$ are uniformly
bounded in magnitude: this is just a consequence of Ger\v sgorin's Theorem
(see, e.g., \cite[page 344]{HJ}).
\item For the case of Hamiltonians modeling insulators 
or semiconductors, the spectral (HOMO-LUMO) gap does not
vanish as $n_e\to \infty$. More precisely: if $\varepsilon_i^{(n)}$ denotes the $i$th
eigenvalue of $H_n$, and $\gamma_n:=\varepsilon_{n_e+1}^{(n)} - \varepsilon_{n_e}^{(n)}$,
then $\inf_n \gamma_n > 0$. This assumption does not hold for Hamiltonians
modelling metallic systems; in this case, $\inf_n \gamma_n = 0$, i.e.,
the spectral gap goes to zero as $n_e\to \infty$.
\end{enumerate}

\vspace{0.06in}

We emphasize that these properties hold for very general classes of
physical systems and discretization methods for electronic
structure, with few exceptions (i.e., non-localized basis functions,
such as plane waves).
It is instructive to contrast these properties with those of matrix sequences 
arising in finite element or finite difference approximations of PDEs, where
the matrix size increases as $h\to 0$, with $h$ a discretization parameter.
Considering the case of a scalar,
second-order elliptic PDE, we see that the first property only holds in the
one-dimensional case, or in higher-dimensional cases when the discretization
is refined in only one dimension. (As we will see, this condition
is rather restrictive and can be relaxed.) 
Furthermore,
it is generally impossible to satisfy the second assumption and the one on
the non-vanishing gap ($\inf_n \gamma_n > 0$) 
simultaneously. Indeed, normalizing the matrices so that their spectra remain
uniformly bounded will generally cause the eigenvalues to completely fill
the spectral interval as $n\to \infty$. That is, in general, given any two
points inside this interval, for $n$ large enough 
at least one eigenvalue of the corresponding $n\times n$
matrix falls between these two points. 

Our assumptions allow us to refer to the spectral gap of the matrix
sequence $\{H_n\}$ without having to specify whether we are talking
about an absolute or a relative gap. As we shall see, it is 
convenient to assume that all the matrices
in the sequence $\{H_n\}$ have spectrum contained in the interval
$[-1,1]$; therefore, the absolute gap and the relative gap of any matrix $H_n$
are the same, up to the factor 2. The spectral gap 
(more precisely, its reciprocal) is a natural measure of
the conditioning of the problem of computing the spectral projector onto
the occupied subspace, i.e.,
the subspace spanned by the eigenvectors of $H_n$ corresponding to eigenvalues
$\varepsilon_i^{(n)} < \mu$; see, e.g., \cite[page B4]{ruben} for a recent
discussion. The assumption $\inf_n \gamma_n > 0$
then simply means that the electronic structure
problem is {\em uniformly well-conditioned}; note that this assumption
is also very important for the convergence of the outer SCF
iteration \cite{LeBris,yang}. This hypothesis is satisfied
for insulators and semiconductors, but not in the case of metals.

\section{Approximation of matrices by numerical truncation}\label{trunc}
Discretization of $\cal H$, the Hamiltonian operator, by means of basis sets
consisting of 
linear combinations of Slater or Gaussian-type orbitals 
leads to matrix representations that are,
strictly speaking, full. Indeed, since these basis functions 
are globally supported, almost all matrix elements
$H_{ij} = \langle \phi_j,{\cal H}\phi_i\rangle 
\equiv \langle \phi_i |{\cal H} | \phi_j \rangle$ are non-zero. 
The same is true for the entries of the overlap matrix
$S_{ij} = \langle \phi_j, \phi_i\rangle$. However, owing
to the rapid decay of the basis functions outside of a
localized region, and due to the local nature of the
interactions encoded by the Hamiltonian operator, the entries
of $H$ decay exponentially fast with the
spatial separation of the basis functions. (For the overlap
matrix corresponding to Gaussian-type orbitals,
the decay is actually even faster than exponential.) 

More formally, we say that a sequence of
$n\times n$ matrices ${A_n} =\left (\,[A_n]_{ij}\right )$ has the
{\em exponential
off-diagonal decay property} if there are constants $c > 0$ and
$\alpha >0$ independent of $n$ such that
\begin{equation}
  \left  |[A_n]_{ij}\right | \leq c\, {\rm e}^{-\alpha |i-j|}, \quad {\rm for \,\, all}\quad
    i,j=1,\ldots ,n. \label{truncerr}
\end{equation}
Corresponding to each matrix $A_n$ we then define for
a nonnegative integer $m$ the
matrix $A_n^{(m)}=\left ([A_n^{(m)}]_{ij}\right )$ defined as follows:
$$[A_n^{(m)}]_{ij} = 
\begin{cases}
    \begin{array}{cc}
    [A_n]_{ij} & {\rm if}\quad  |i-j| \leq m;\\
    0 &{\rm otherwise}.
    \end{array}
\end{cases} 
$$
Clearly, each matrix $A_n^{(m)}$ is $m$-banded and can be
thought of as an approximation, or truncation, of $A_n$. Note that
the set of $m$-banded matrices forms a vector subspace
${\cal V}_m \subseteq {\complex}^{n\times n}$ and that $A_n^{(m)}$
is just the orthogonal projection of $A_n$ onto ${\cal V}_m$ with
respect to the Frobenius inner product $\langle A, B\rangle_F
:= {\rm Tr}(B^*A)$. Hence, $A_n^{(m)}$ is the best approximation 
of $A_n$ in ${\cal V}_m$ with respect to the Frobenius norm.

Note that we do not require the
matrices to be Hermitian or symmetric here;
we only assume (for simplicity) that the same pattern of non-zero
off-diagonals is present on either side of
the  main diagonal.
The following simple result from \cite{benzirazouk} provides 
an estimate of the rate at which
the truncation error decreases as the bandwidth $m$ of the
approximation increases. In addition, it establishes $n$-independence
of the truncation error for $n\to \infty$ for matrix sequences
satisfying (\ref{truncerr}).

\vspace{0.06in}

\begin{proposition}\cite{benzirazouk} \label{ter}
Let $A$ be a matrix with entries $A_{ij}$
satisfying (\ref{truncerr}) and let $A^{(m)}$ be the
corresponding $m$-banded approximation.
Then for any $\epsilon > 0$ there is an $\bar m$ such that
$\|A - A^{(m)}\|_1 \leq \epsilon$ for $m\geq \bar m$.
\end{proposition}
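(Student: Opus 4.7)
The plan is to bound the matrix $1$-norm $\|A - A^{(m)}\|_1 = \max_j \sum_i |[A-A^{(m)}]_{ij}|$ by estimating the maximum column sum of the truncation error directly from the entrywise bound \eqref{truncerr}. By construction, $[A-A^{(m)}]_{ij}$ equals $A_{ij}$ when $|i-j|>m$ and vanishes otherwise, so for each fixed column index $j$ only the far off-diagonal entries contribute.

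First I would fix $j$ and apply (\ref{truncerr}) to write
\begin{equation*}
\sum_{i=1}^{n} |[A-A^{(m)}]_{ij}| \;=\; \sum_{\substack{1\le i\le n \\ |i-j|>m}} |A_{ij}| \;\le\; \sum_{\substack{1\le i\le n \\ |i-j|>m}} c\,e^{-\alpha|i-j|}.
\end{equation*}
The crucial move is to majorize this finite sum by the corresponding two-sided infinite tail $2c\sum_{k=m+1}^{\infty} e^{-\alpha k}$, which removes all dependence on $j$ and on $n$; this uniformity in $n$ is exactly what is needed for matrix sequences of growing size. Summing the resulting geometric series gives
\begin{equation*}
\|A - A^{(m)}\|_1 \;\le\; \frac{2c\,e^{-\alpha(m+1)}}{1-e^{-\alpha}},
\end{equation*}
an explicit bound that decays exponentially in $m$ and is independent of the matrix order.

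Finally, given $\epsilon>0$, I would solve the inequality $\frac{2c\,e^{-\alpha(m+1)}}{1-e^{-\alpha}} \le \epsilon$ for the smallest admissible integer $m$, producing
\begin{equation*}
\bar m \;=\; \left\lceil \frac{1}{\alpha}\,\ln\!\left(\frac{2c}{\epsilon(1-e^{-\alpha})}\right) - 1 \right\rceil_+,
\end{equation*}
which proves the claim. The argument is elementary throughout and there is no real obstacle; the only point worth emphasizing is the step that replaces the finite column sum by the infinite tail, since it is this replacement that delivers an $n$-independent threshold $\bar m$ and thereby makes the estimate useful for the subsequent analysis of sequences $\{H_n\}$ in the thermodynamic limit.
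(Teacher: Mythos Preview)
Your proof is correct and follows essentially the same route as the paper: bound each column sum of $A-A^{(m)}$ by the two-sided geometric tail $2c\sum_{k>m}e^{-\alpha k}$, then solve the resulting inequality for $\bar m$. The paper does not spell out the argument but simply records the formula $\bar m = \left\lfloor \frac{1}{\alpha}\ln\!\left(\frac{2c}{(1-e^{-\alpha})\epsilon}\right)\right\rfloor$, which differs from yours only by the inconsequential shift arising from whether one writes the tail bound as $e^{-\alpha m}$ or $e^{-\alpha(m+1)}$ before rounding.
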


\vspace{0.06in}

The integer $\bar m$ in the foregoing proposition is easily
found to be given by
$$\bar m = 
\left \lfloor \frac{1}{\alpha}{\ln }\left (\frac{2c}{1-{\rm e}^{-\alpha}}\epsilon^{-1} \right )
\right \rfloor .$$
Clearly, this result is of interest only for $\bar m < n$ (in fact,
for $\bar m \ll n$).

\begin{example}
Let us consider a tridiagonal matrix $H$ of size $200\times 200$, with 
eigenvalues randomly chosen in $[-1,-0.5]\cup [0.5,1]$, and let $P$ be
the associated density matrix with $\mu=0$. Numerical computation shows that 
$P$ satisfies the bound \eqref{truncerr} with $\alpha=0.6$ and $c=10$ (as long
as its entries are larger than the machine precision). 
Fig.~\ref{mbarexample} depicts the absolute value of the entries in the first row 
of $P$ and the bound \eqref{truncerr}, in a logarithmic scale. Choose, for instance,
a tolerance $\epsilon=10^{-6}$; then it follows from the previous formula that 
the truncated matrix $P^{(m)}$ satisfies $\|P-P^{(m)}\|_1\leq\epsilon$ for any 
bandwidth $m\geq29$.
\end{example}

\begin{figure}[t!]
\begin{center}
\includegraphics[width=0.7\textwidth]{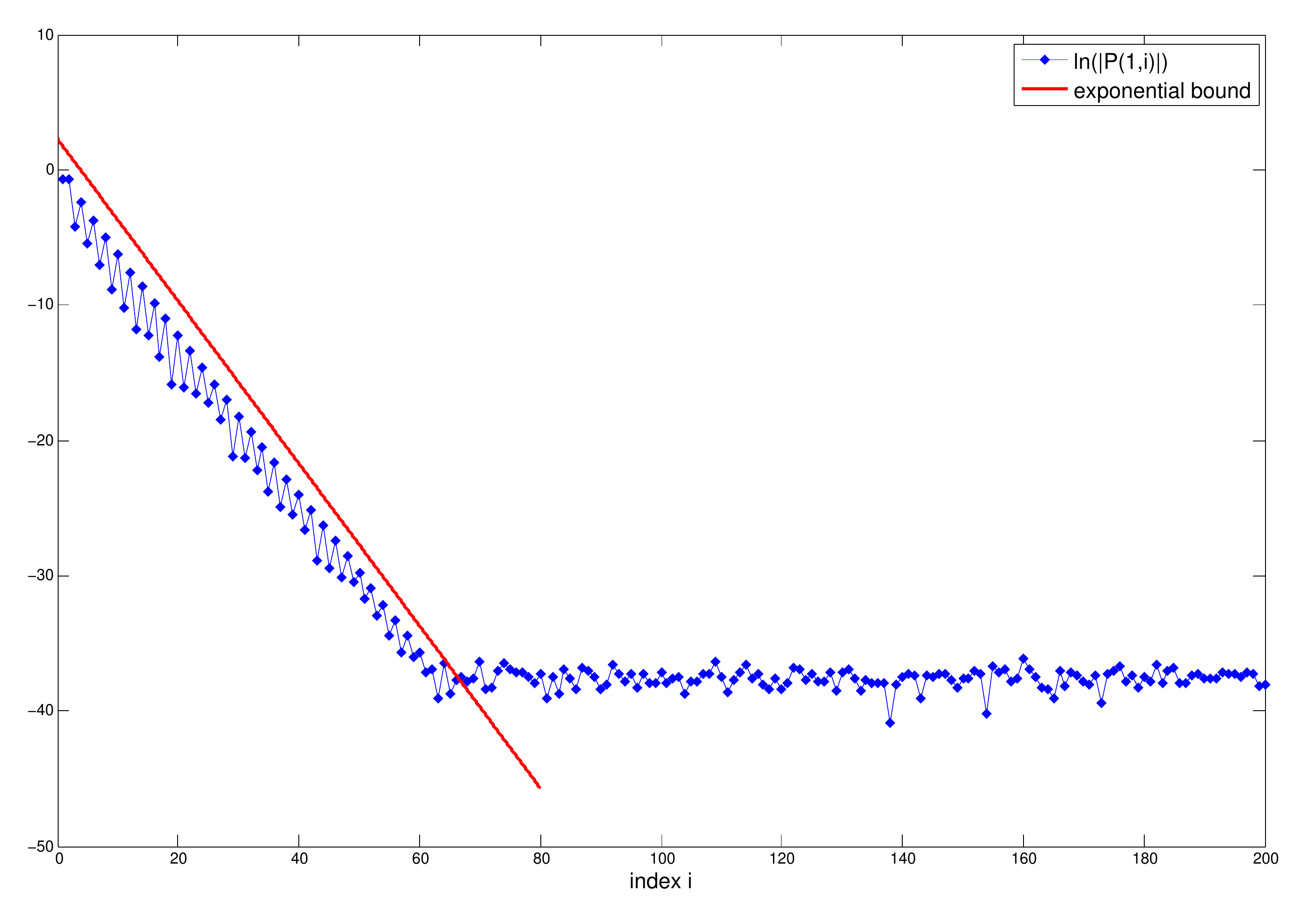}
\vspace{-0.2in}
\caption{Logarithmic plot of the first row of a density matrix and
an exponential bound.}\label{mbarexample}
\end{center}
\end{figure}

What is important about this simple result is
that when applied to a sequence $\{A_n\}=\left ([A_n]_{ij}\right )$ 
of $n\times n$ matrices having the
off-diagonal decay property (\ref{truncerr}) with $c$ and
$\alpha$ independent of $n$, the bandwidth $\bar m$ is itself
independent of $n$.
For convenience, we have stated Proposition \ref{ter} in the
$1$-norm; when $A=A^*$ the same conclusion holds for the
$2$-norm, owing to the
inequality
\begin{equation}\label{norm_ineq}
\|A\|_2 \leq \sqrt{\|A\|_1\|A\|_{\infty}} 
\end{equation}
(see \cite[Corollary 2.3.2]{GVL}). 
Moreover, a similar result also applies to other types
of decay, such as algebraic (power-law) decay of the form
$$\left |[A_n]_{ij}\right | \leq \frac{c}{|i-j|^p + 1}, \quad {\rm for \,\, all}
\quad i,j = 1,\ldots ,n$$
with $c$ and $p$ independent of $n$, as long as $p>1$.

\begin{remark}
It is worth emphasizing that the above considerations 
do not require that the matrix entries $[A_n]_{ij}$ 
themselves actually decay exponentially away from the main
diagonal, but only that they are bounded above in an exponentially 
decaying manner. In particular, the decay behavior of the matrix
entries need not be monotonic.
\end{remark}

Although we have limited ourselves to absolute approximation errors 
in various norms, it is easy to accommodate relative errors  
by normalizing the matrices. Indeed, upon normalization all the
Hamiltonians satisfy $\|H_n\|_2 = 1$; furthermore, for density
matrices this property is automatically satisfied, since they are orthogonal
projectors. In the next section we also consider using the
Frobenius norm for projectors.

The foregoing considerations can be extended to matrices with
more general decay patterns, i.e., with 
exponential decay away from a subset of selected positions
$(i,j)$ in the matrix; see, e.g., \cite{benzirazouk} as well as
\cite{cramer}.
In order to formalize this notion, we first recall the 
definition of {\em geodetic distance} $d(i,j)$ in a graph \cite{diestel}:
it is the number of edges
in the shortest path connecting
two nodes $i$ and $j$, possibly infinite if there is no
such path. 
Next, given a (sparse) matrix sequence $\{A_n\}$
we associate with each
matrix $A_n$ a graph $G_n$ with $n$ nodes and
$m=O(n)$ edges. 
In order to obtain meaningful results, however, we need 
to impose some restrictions on the
types of sparsity allowed. Recall that the {\em degree}
of node $i$ in a graph is just the number of neighbors
of $i$, i.e., the number of nodes at distance 1 from $i$. We
denote by ${\rm deg}_n(i)$ the degree of node $i$ in
the graph $G_n$. We shall assume that the maximum degree
of any node in $G_n$ remains bounded as $n\to \infty$;
that is, there exists a positive integer $D$ independent
of $n$ such that $\max_{1\le i \le n}{\rm deg}_n(i) \le D$ 
for all $n$. 
Note that
when $A_n = H_n$ (discretized Hamiltonian),
this property is a mathematical restatement of the  
physical notion of locality, or finite range, of interactions. 

Now let us assume that we have a sequence of 
$n\times n$ matrices $A_n =\left ([A_n]_{ij}\right )$ 
with associated graphs $G_n$ and graph distances $d_n(i,j)$. 
We will say that ${A_n}$ has the {\em exponential decay
property relative to the graph $G_n$} if there are 
constants $c > 0$ and
$\alpha >0$ independent of $n$ such that 
\begin{equation}
  \left  |[A_n]_{ij}\right | \leq c\, {\rm e}^{-\alpha d_n(i,j)}, 
   \quad {\rm for \,\, all}\quad
    i,j=1,\ldots ,n. \label{truncerr1}
\end{equation}

We have the following simple result.

%

\vspace{0.05in}

\begin{proposition}\label{ter1}
Let $\{A_n\}$ be a sequence of $n\times n$ matrices 
satisfying the exponential decay property (\ref{truncerr1})
relative to a sequence
of graphs $\{G_n\}$ having uniformly bounded maximal degree.
Then, for any given $0 < \epsilon < c$,
each $A_n$ contains at most $O(n)$ entries greater than $\epsilon$
in magnitude.
\end{proposition}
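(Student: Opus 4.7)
The plan is to do a row-by-row count, bounding the number of large entries in each row by a constant independent of $n$, and then sum over the $n$ rows.

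First I would fix a row index $i$ of $A_n$. If $|[A_n]_{ij}| > \epsilon$, then the decay estimate (\ref{truncerr1}) forces
\begin{equation*}
c\, e^{-\alpha\, d_n(i,j)} > \epsilon,\qquad \text{hence}\qquad d_n(i,j) < R := \tfrac{1}{\alpha}\ln(c/\epsilon).
\end{equation*}
Note $R>0$ since $\epsilon<c$, and $R$ depends only on $c$, $\alpha$ and $\epsilon$, not on $n$. Therefore the columns $j$ contributing entries exceeding $\epsilon$ in row $i$ all lie in the ball $B_{G_n}(i,R) = \{j : d_n(i,j) < R\}$ of the graph $G_n$.

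Next I would bound the cardinality of $B_{G_n}(i,R)$ using the uniform degree bound. By a standard breadth-first argument, starting from $i$ there is at most one node at distance $0$, at most $D$ nodes at distance $1$, and in general at most $D(D-1)^{k-1}$ nodes at distance $k\ge 1$ (each new node has at most $D-1$ unexplored neighbors). Summing the geometric series over $0\le k \le \lfloor R \rfloor$ yields
\begin{equation*}
\#\,B_{G_n}(i,R)\;\le\; 1 + \sum_{k=1}^{\lfloor R\rfloor} D(D-1)^{k-1}\;\le\; C(D,R),
\end{equation*}
where $C(D,R)$ is a constant that depends only on $D$ and $R$, hence only on $c$, $\alpha$, $\epsilon$ and $D$ — all quantities independent of $n$.

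Finally I would sum this row-wise bound over the $n$ rows of $A_n$, giving at most $n\cdot C(D,R) = O(n)$ entries exceeding $\epsilon$ in magnitude, which is the claim. There is no real obstacle here: the only point that requires care is the BFS counting step, namely verifying that the uniform maximum-degree hypothesis yields an $n$-independent bound on the size of graph balls of fixed radius. Everything else is a direct consequence of (\ref{truncerr1}) together with taking logarithms.
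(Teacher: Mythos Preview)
Your proof is correct and follows essentially the same approach as the paper: both argue that $|[A_n]_{ij}|>\epsilon$ forces $d_n(i,j)<\tfrac{1}{\alpha}\ln(c/\epsilon)$, then invoke the uniform degree bound to conclude each row has only a constant number of such entries, and finally sum over rows. Your version is in fact a bit more explicit, spelling out the BFS counting argument for why balls of fixed radius have $n$-independent cardinality, whereas the paper simply asserts this as a consequence of the bounded maximal degree assumption.
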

\begin{proof}
For a fixed node $i$, the condition $\left |[A_n]_{ij}\right |
> \epsilon$ together with (\ref{truncerr1}) immediately implies
\begin{equation}\label{pip}
d_n(i,j) < \frac{1}{\alpha}\ln \left (\frac{c}{\epsilon}\right ).
\end{equation}
Since $c$ and $\alpha$ are independent of $n$, inequality
(\ref{pip}) together
with the assumption that the graphs $G_n$ have bounded
maximal degree implies that 
for any row of the matrix (indexed by $i$), there is at most
a constant number of entries that have magnitude greater
than $\epsilon$. Hence, only $O(n)$ entries in $A_n$ can satisfy 
$\left |[A_n]_{ij}\right | > \epsilon$.
\end{proof} 

\begin{remark}
Note that the hypothesis of uniformly bounded maximal degrees
is certainly satisfied if the graphs $G_n$ have uniformly
bounded bandwidths (recall that the bandwidth of a graph is
just the bandwidth of the corresponding adjacency matrix). 
This special case corresponds to the matrix sequence $\{A_n\}$
having the off-diagonal exponential decay property. 
\end{remark}

Under the same assumptions of Proposition \ref{ter1},
we can show that it is possible to approximate each $A_n$ to within an
arbitrarily small error $\epsilon >0$ in norm with a sparse
matrix $A_n^{(m)}$ (i.e., a matrix containing only $O(n)$ non-zero entries).

\vspace{0.05in}

\begin{proposition}\label{ter2}
Assume the hypotheses of Proposition \ref{ter1} are satisfied.
Define the matrix $A_n^{(m)} = \left ([A_n^{(m)}]_{ij}\right)$,
where
$$[A_n^{(m)}]_{ij} = 
\begin{cases}
    \begin{array}{cc}
    [A_n]_{ij} & {\rm if}\quad  d_n(i,j) \leq m;\\
    0 &{\rm otherwise}.
    \end{array}
\end{cases} 
$$
Then for any given $\epsilon >0$,
there exists $\bar m$ independent of $n$
such that $\|A_n-A_n^{(m)}\|_1 < \epsilon$, for
all $m\ge \bar m$. Moreover, if $A=A^*$ then it is also 
$\|A_n-A_n^{(m)}\|_2 < \epsilon$ for all $m\ge \bar m$. 
Furthermore,
each $A_n^{(m)}$ contains only $O(n)$ non-zeros.
\end{proposition}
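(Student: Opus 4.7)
The plan is to reduce everything to estimating a single row (or column) sum and then exploit the uniform degree bound together with (\ref{truncerr1}). Write $B_n := A_n - A_n^{(m)}$; by construction $[B_n]_{ij} = [A_n]_{ij}$ when $d_n(i,j) > m$ and vanishes otherwise. Since $\|B_n\|_1 = \max_j \sum_i |[B_n]_{ij}|$, I would first fix an arbitrary column index $j$ and partition the indices $i$ with $d_n(i,j) > m$ into \emph{spheres} $S_k(j) := \{i : d_n(i,j) = k\}$ for $k = m+1, m+2, \ldots$. Applying (\ref{truncerr1}) gives
\begin{equation*}
\sum_{i \,:\, d_n(i,j) > m} |[A_n]_{ij}| \;\le\; \sum_{k = m+1}^{\infty} |S_k(j)|\, c\, e^{-\alpha k}.
\end{equation*}

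Next I would bound $|S_k(j)|$ using the uniform maximum-degree hypothesis. Starting from $j$ and exploring the graph breadth-first, the number of nodes reachable in exactly $k$ steps is at most $D(D-1)^{k-1}$, where $D$ is the $n$-independent bound on the maximum degree of the graphs $G_n$. This estimate is crucially independent of both $n$ and $j$. Substituting, the tail becomes dominated by $cD \sum_{k > m} \bigl((D-1)e^{-\alpha}\bigr)^k \cdot e^{-\alpha}/(D-1)$, a geometric series whose sum decays exponentially in $m$ provided the decay rate $\alpha$ is large enough that $(D-1)e^{-\alpha} < 1$. Taking the maximum over $j$ yields a bound on $\|A_n - A_n^{(m)}\|_1$ that depends only on $c,\alpha,D,m$, so one chooses $\bar m$ (explicitly, the smallest integer making the geometric tail $< \epsilon$) independent of $n$. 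The symmetry of the hypothesis in $i$ and $j$ (the decay bound (\ref{truncerr1}) is symmetric in the indices) gives the same bound for $\|A_n - A_n^{(m)}\|_\infty$; when $A_n = A_n^*$ one invokes the inequality (\ref{norm_ineq}) to deduce the 2-norm statement.

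For the sparsity claim, I would count directly: each row of $A_n^{(m)}$ has non-zeros only at positions $i$ with $d_n(i,j) \le m$, and by the same BFS count the size of the closed ball of radius $m$ around any node is at most $1 + \sum_{k=1}^{m} D(D-1)^{k-1}$, a constant $C(D,m)$ independent of $n$. Hence $A_n^{(m)}$ contains at most $C(D,m) \cdot n = O(n)$ non-zero entries, as claimed.

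The main obstacle in the plan is the convergence of the geometric tail, which requires $\alpha > \ln(D-1)$; if the decay rate were too small relative to the branching of the graphs, the naive counting $|S_k(j)| \le D(D-1)^{k-1}$ would not suffice. In the regimes relevant to the paper this is typically not a problem because the graphs arising from discretized physical Hamiltonians have polynomial growth of balls (e.g., $|S_k| = O(k^{d-1})$ for $d$-dimensional lattice-like structures), which makes the tail converge for any $\alpha > 0$; the argument above can be sharpened in that setting by replacing the crude exponential bound on $|S_k(j)|$ with the actual polynomial growth rate, still yielding an $n$-independent $\bar m$.
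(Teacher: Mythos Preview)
Your argument is sound and, on the crucial estimate, more careful than the paper's. The paper follows you up to the column-sum reduction, but then writes directly
\[
\max_{j}\sum_{i:\,d_n(i,j)>m} \lambda^{d_n(i,j)} \;\le\; \sum_{k=m+1}^{n} \lambda^k \;<\; \frac{\lambda^{m+1}}{1-\lambda}, \qquad \lambda=e^{-\alpha},
\]
and stops; this gives an $n$-independent tail for \emph{every} $\alpha>0$ and sidesteps the constraint $\alpha>\ln(D-1)$ altogether. The difficulty is that this inequality tacitly assumes at most one index $i$ sits at each distance $k$ from $j$, which is false for general bounded-degree graphs (a $D$-regular tree has roughly $D(D-1)^{k-1}$ nodes at distance $k$). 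Your sphere decomposition with the BFS bound $|S_k(j)|\le D(D-1)^{k-1}$ is the honest way to control that sum, and the price is exactly the branching constraint you flagged.

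So your proof is correct under $\alpha>\ln(D-1)$, and your remark that lattice-like graphs have polynomial ball growth---so the tail converges for any $\alpha>0$---is the right way to lift the restriction in the physically relevant setting. What the paper's shortcut buys is a cleaner bound with no condition on $\alpha$, but as written that step is not justified; what your route buys is a rigorous argument, at the cost of an extra hypothesis (or an extra sentence about ball growth). Your handling of the $2$-norm via (\ref{norm_ineq}) and the $O(n)$ nonzero count matches the paper exactly.
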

\begin{proof}
For each $n$ and $m$ and for $1\le j\le n$, 
let 
$$K_{n}^m (j):=\left \{i\,|\, 1\le i\le n \,\, {\rm and} \,\, d_n(i,j) > m\right \}.$$ 
We have 
$$\|A_n - A_n^{(m)}\|_1 =\max_{1\le j\le n} \sum_{i\in K_{n}^m(j)} 
\left |[A_n]_{ij} \right |
\le c \max_{1\le j\le n}  \sum_{i\in K_{n}^m(j)} {\rm e}^{-\alpha d_n(i,j)}.
$$
Letting $\lambda = {\rm e}^{-\alpha}$, we obtain
$$\|A_n - A_n^{(m)}\|_1  \le c \max_{1\le j\le n} \sum_{i\in K_{n}^m(j)} \lambda ^{d_n(i,j)}
\le c \sum_{k=m+1}^n \lambda ^k < c \sum_{k=m+1}^\infty 
\lambda ^k = c\,\frac{\lambda ^{m+1}}{1 - \lambda}.
$$
Since $0<\lambda < 1$, for any given $\epsilon > 0$ we can always find $\bar m$
such that 
$$c\,\frac{\lambda ^{m+1}}{1 - \lambda} \le \epsilon \quad {\rm for \,\, all}\quad m\ge \bar m.$$
If $A_n=A_n^*$, then $\|A_n - A_n^{(m)}\|_2 \le \|A_n - A_n^{(m)}\|_1 < \epsilon$ for all
$m\ge \bar m$. The last assertion follows from the bounded maximal degree assumption.
\end{proof}

Hence, when forming the overlap matrices and discrete 
Hamiltonians, only matrix elements corresponding
to `sufficiently nearby' basis functions (i.e., basis
functions having sufficient overlap)  need to be
computed, the others being negligibly small.
The resulting matrices are therefore sparse, and
indeed banded for 1D problems, with a 
number of non-zeros that grows linearly in the matrix
dimension. 
The actual bandwidth, or sparsity pattern, may depend
on the choice and numbering (ordering) of basis functions and 
(for the discrete Hamiltonians) on the strength
of the interactions, i.e., on the form of the potential 
function $V$ in the Hamiltonian operator. 

It should be kept in mind
that while the number of non-zeros in the 
Hamiltonians discretized using (say) Gaussian-type orbitals is $O(n)$, 
the actual number of non-zeros per
row can be quite high, indeed much higher than when finite
differences or finite elements are used to discretize the
same operators. It is not unusual to have hundreds or even
thousands of non-zeros
per row. On the other hand, the matrices are very often not huge
in size. As already mentioned, the size $n$ of the matrix is
the total number of basis functions, which is a small or moderate
multiple (between 2 and 25, say) of the number $n_e$
of electrons. For example, if $n_b\approx 10$ and $n_e \approx 2000$,
the size of $H$ will be $n\approx 20,000$ and $H$ could easily contain
several millions of non-zeros.
This should be compared
with `real space' discretizations based on finite elements
or high-order finite difference schemes \cite{saad}. The resulting Hamiltonians are 
usually very sparse, 
with a number of non-zero entries per row averaging a few tens 
at most \cite{bekas}.
However, these matrices are of much larger dimension than the matrices
obtained using basis sets consisting of atom-centered orbitals.  
In this case, methodologies based on approximating the density matrix
are currently not feasible, except for 1D problems.
The same remark applies to discretizations based on plane waves, which
tend to produce matrices of an intermediate size between those obtained using localized
basis sets and those resulting from the use of real space discretizations. These
matrices are actually dense and are never formed explicitly. Instead, they
are only used in the form of matrix-vector products, which can be
implemented efficiently by means of FFTs; see, e.g., \cite{saad}. 

The possibility of developing linear scaling methods 
for electronic structure largely depends on the localization
properties of the density matrix $P$. 
It is therefore critical to understand the decay behavior
of the density matrix. Since at zero temperature the density matrix  
is just a particular spectral projector, we consider next some
general properties of such projectors.

\section{General properties of orthogonal projectors}\label{proj}

While our main goal in this paper is to study decay properties in
orthogonal projectors associated with certain sequences of
sparse matrices of increasing size, it is useful
to first establish some {\em a priori} estimates for the entries
of general projectors. Indeed, the intrinsic properties of a projector
like idempotency, positive semidefiniteness, and the relations
between their trace, rank, and Frobenius norm tend to impose  
rather severe constraints on the magnitude of its entries,
particularly for increasing dimension and rank.

We begin by observing that in an orthogonal projector $P$, 
all entries $P_{ij}$ satisfy $|P_{ij}|\leq 1$ and
since $P$ is positive semidefinite, its largest entry
is on the main diagonal.
 Also, the
trace and rank coincide: ${\rm Tr}(P) = {\rm rank}(P)$.
Moreover, $\|P\|_2 = 1$
and $\|P\|_F = \sqrt{{\rm Tr}(P)}$. 

In the context of electronic structure computations, we deal with a sequence of
$n\times n$ orthogonal projectors $\{P_n\}$ of rank $n_e$, where
$n=n_b\cdot n_e$ with $n_e$ increasing and $n_b$ fixed. Hence, 
\begin{equation} \label{dens_mat_seq}
 {\rm Tr}(P_n) = {\rm rank}(P_n) = n_e, \quad {\rm and}\quad
\|P_n\|_F = \sqrt{n_e}. 
\end{equation}
For convenience, we will call
a sequence of orthogonal projectors $\{P_n\}$ satisfying
(\ref{dens_mat_seq}) a {\em density matrix sequence}; the
entries of $P_n$ will be denoted by $[P_n]_{ij}$.
We have the following lemma.

\begin{lemma}\label{lemma1}
Let $\{P_n\}$ be a density matrix sequence. Then
$$\frac{\sum_{i\neq j}\left |[P_n]_{ij}\right |^2}
       {\|P_n\|_F^2} \leq 1 - \frac{1}{n_b}.$$
\end{lemma}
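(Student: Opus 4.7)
The plan is to exploit the algebraic identity $P_n^2 = P_n$ together with the trace normalization to express the off-diagonal mass as a deficit coming from the diagonal entries, and then bound the diagonal contribution from below via Cauchy--Schwarz.

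First I would split the Frobenius norm into its diagonal and off-diagonal contributions: since $P_n$ is Hermitian,
\[
\|P_n\|_F^2 = \sum_{i,j}|[P_n]_{ij}|^2 = \sum_{i}[P_n]_{ii}^2 + \sum_{i\neq j}|[P_n]_{ij}|^2,
\]
where I have used that $[P_n]_{ii}\ge 0$ (since $P_n$ is a positive semidefinite projector). Next, I would use idempotency to compute the Frobenius norm exactly: $\|P_n\|_F^2 = \operatorname{Tr}(P_n^*P_n) = \operatorname{Tr}(P_n^2) = \operatorname{Tr}(P_n) = n_e$, which matches \eqref{dens_mat_seq}. So the claim reduces to showing
\[
\sum_i [P_n]_{ii}^2 \;\ge\; \frac{n_e}{n_b}.
\]

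The key step is a one-line Cauchy--Schwarz (equivalently, the QM--AM inequality) applied to the $n$ nonnegative numbers $[P_n]_{ii}$, using the fact that their sum equals $\operatorname{Tr}(P_n) = n_e$ and that there are $n = n_b\cdot n_e$ of them:
\[
\sum_{i=1}^n [P_n]_{ii}^2 \;\ge\; \frac{1}{n}\Bigl(\sum_{i=1}^n [P_n]_{ii}\Bigr)^2 \;=\; \frac{n_e^2}{n_b\, n_e} \;=\; \frac{n_e}{n_b}.
\]
Combining this with the diagonal/off-diagonal splitting gives
\[
\sum_{i\neq j}|[P_n]_{ij}|^2 \;=\; n_e - \sum_i [P_n]_{ii}^2 \;\le\; n_e - \frac{n_e}{n_b} \;=\; \Bigl(1-\frac{1}{n_b}\Bigr)\|P_n\|_F^2,
\]
which is the desired bound.

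There is no real obstacle here; the only subtle point is recognizing which quantities are fixed (the trace $n_e$ and the total number of diagonal entries $n = n_b n_e$) and that the worst case for the off-diagonal mass corresponds to the diagonal being as ``spread out'' as possible, which is precisely the equality case of Cauchy--Schwarz (all diagonal entries equal to $n_e/n = 1/n_b$). This also suggests that the bound is sharp, attained for instance by $P_n = \frac{1}{n_b} J$ when such a matrix happens to be a rank-$n_e$ projector, or more generally when $P_n$ has constant diagonal $1/n_b$.
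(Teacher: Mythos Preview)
Your proof is correct and follows essentially the same route as the paper: split $\|P_n\|_F^2=n_e$ into diagonal and off-diagonal parts, then bound $\sum_i [P_n]_{ii}^2$ from below by $n_e/n_b$ using the trace constraint $\sum_i [P_n]_{ii}=n_e$. The only cosmetic difference is that you explicitly invoke Cauchy--Schwarz (QM--AM), whereas the paper phrases the same step as a constrained minimization; your version is arguably cleaner since it does not appeal to the bound $|[P_n]_{ii}|\le 1$, which is in fact irrelevant for the \emph{lower} bound on $\sum_i [P_n]_{ii}^2$.
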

\begin{proof}
Just observe that 
${\rm Tr}(P_n)=\sum_{i=1}^n [P_n]_{ii} = n_e$ together with
$\left |[P_n]_{ii}\right |\leq 1$ for all $i$ imply that the minimum
of the sum $\sum_{i=1}^n \left |[P_n]_{ii}\right |^2 $
is achieved when $[P_n]_{ii} = \frac{n_e}{n} = \frac{1}{n_b}$ for all $i$.
Hence, $\sum_{i=1}^n \left |[P_n]_{ii}\right |^2 \geq \frac{n}{n_b^2}=
\frac{n_e}{n_b}$. Therefore,
\begin{equation}\label{off}
\sum_{i\neq j}\left |[P_n]_{ij}\right |^2 = \|P_n\|_F^2 - 
\sum_{i=1}^n \left |[P_n]_{ii}\right |^2 \leq \left (1-\frac{1}{n_b} \right )n_e
\end{equation}
and the result follows dividing through by $\|P_n\|_F^2 = n_e$.
\end{proof}

\begin{remark}
From the proof one can trivially see that the bound (\ref{off}) is sharp.
In section \ref{metals} we shall see a non-trivial 
example where the bound is attained.
\end{remark}

\begin{theorem}\label{thm}
Let $\{P_n\}$ be a density matrix sequence. Then, for any $\epsilon > 0$,
the number of entries of $P_n$ greater than or equal to 
$\epsilon$ in magnitude grows
at most linearly with $n$.
\end{theorem}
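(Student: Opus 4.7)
The plan is to exploit the Frobenius norm identity $\|P_n\|_F^2 = n_e$, which follows immediately from the fact that $P_n$ is Hermitian and idempotent, so $\|P_n\|_F^2 = \mathrm{Tr}(P_n^* P_n) = \mathrm{Tr}(P_n) = \mathrm{rank}(P_n) = n_e$. This already gives a quantitative constraint on how many large entries $P_n$ can accommodate, since every sizable entry must pay a quadratic price in the Frobenius norm.

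Concretely, I would fix $\epsilon > 0$ and let $N_n(\epsilon)$ denote the number of index pairs $(i,j)$ with $|[P_n]_{ij}| \geq \epsilon$. Each such entry contributes at least $\epsilon^2$ to $\sum_{i,j}|[P_n]_{ij}|^2 = \|P_n\|_F^2$, so
\[
N_n(\epsilon)\,\epsilon^2 \;\leq\; \sum_{i,j}|[P_n]_{ij}|^2 \;=\; n_e \;=\; \frac{n}{n_b}.
\]
Rearranging gives $N_n(\epsilon) \leq n/(n_b\,\epsilon^2)$, which is linear in $n$ since $n_b$ is fixed by hypothesis. (If a sharper constant is desired, one can split off the diagonal contribution and invoke Lemma~\ref{lemma1} to bound only the off-diagonal count, but this is not needed for the stated result.)

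There is essentially no obstacle here: the theorem is a direct counting consequence of the Frobenius norm/rank identity already recorded in \eqref{dens_mat_seq}. The only thing worth emphasizing in the write-up is that the bound is \emph{universal} in the sense that it does not depend on any structural assumptions about $P_n$ (such as sparsity of an underlying Hamiltonian); it is purely a consequence of $P_n$ being an orthogonal projector whose rank is a fixed fraction $1/n_b$ of its dimension. This also explains why density matrices cannot have too many large entries, independently of any decay estimates proved later.
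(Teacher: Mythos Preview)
Your proof is correct and follows essentially the same Frobenius-norm counting argument as the paper. The only difference is that the paper first separates off the diagonal entries and invokes Lemma~\ref{lemma1} to obtain the slightly sharper constant $K \le \frac{n}{\epsilon^2 n_b}\bigl(1-\frac{1}{n_b}\bigr)$ for the off-diagonal count, whereas you bound all entries at once directly from $\|P_n\|_F^2 = n_e$; you already note this refinement yourself, and it is indeed not needed for the linear-in-$n$ conclusion.
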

\begin{proof}
Clearly, it suffices to show that the number of off-diagonal entries
$[P_n]_{ij}$ with $|[P_n]_{ij}|\geq \epsilon$ can grow at most
linearly with $n$. Let
$${\cal I} =\left \{ (i,j)\, |\, 1\leq i,j\leq n\,\, {\rm and} \,\, i\ne j\right \}
\quad {\rm and} \quad
{\cal I}_{\epsilon} = \left \{ (i,j)\in {\cal I}\, |\, 
|[P_n]_{ij}|\geq \epsilon \right \}.$$
Then obviously
$$\sum_{i\neq j}\left |[P_n]_{ij}\right |^2 =
\sum_{(i,j)\in {\cal I}_{\epsilon}}\left |[P_n]_{ij}\right |^2
+
\sum_{(i,j)\in {\cal I}\setminus {\cal I}_{\epsilon}}\left |[P_n]_{ij}\right |^2
$$
and if $|{\cal I}_{\epsilon}| = K$, then
$$\sum_{i\neq j}\left |[P_n]_{ij}\right |^2 \geq K\epsilon^2
\quad \Rightarrow  \quad
\frac{\sum_{i\neq j}\left |[P_n]_{ij}\right |^2}
{\|P_n\|_F^2} \geq \frac{K\epsilon^2}{n_e}=\frac{K\epsilon^2n_b}{n}.$$
Hence, by Lemma \ref{lemma1}, 
$$\frac{K\epsilon^2n_b}{n} \leq 
\frac{\sum_{i\neq j}\left |[P_n]_{ij}\right |^2}
{\|P_n\|_F^2} 
\leq 1 - \frac{1}{n_b},$$
from which we obtain the bound
\begin{equation}\label{bound1}
K \leq \frac{n}{\epsilon^2n_b}\left (1 - \frac{1}{n_b}\right ),
\end{equation}
which shows that the number $K$ of entries of $P_n$ with
$|[P_n]_{ij}|\geq \epsilon$ can grow at most as $O(n)$ for
$n\to \infty$.
\end{proof}

\begin{remark}
Due to the presence of the factor $\epsilon^2$ in the denominator
of the bound (\ref{bound1}), for small
$\epsilon$ the proportion of entries of $P_n$ that are
not smaller than
$\epsilon$ can actually be quite large unless $n$
is huge. Nevertheless,  the result is interesting because
it shows that in any density matrix sequence, the
proportion of entries larger than a prescribed
threshold must vanish as $n\to \infty$. 
In practice, for density matrices corresponding to sparse
Hamiltonians with gap, localization occurs already for moderate
values of $n$.
\end{remark}

We already pointed out in the previous section that if the
entries in a matrix sequence $\{A_n\}$ decay at least algebraically with
exponent $p>1$ away from the main diagonal, with rates independent of $n$,
then for any prescribed $\epsilon >0$ it is possible to find a 
sequence of approximants $\left \{A_n^{(m)} \right \}$ 
with a fixed bandwidth $m$ (or sparsity pattern) 
such that $\|A_n - A_n^{(m)}\| < \epsilon$. This applies
in particular to density matrix sequences. The next result
shows that in principle, a {\em linear} rate of decay is enough to 
allow for banded (or sparse) approximation to within any
prescribed {\em relative} error in the Frobenius norm.

\begin{theorem}\label{linear}
Let $\{P_n\}$ be a density matrix sequence and assume that there
exists $c>0$ independent of $n$ such that
$\left |[P_n]_{ij} \right | \leq c/(|i-j| + 1)$ for all $i,j=1,\ldots ,n$.
Then, for all $\epsilon > 0$, there exists a positive integer $\bar m$ 
independent of $n$, such that
$$\frac{\|P_n - P_n^{(m)}\|_F}{\| P_n \|_F} \leq \epsilon \quad {\rm for \,\, all}\,\, 
m\geq \bar m,$$
where $P_n^{(m)}$ is the $m$-banded approximation obtained by
setting to zero all the entries of $P_n$ outside the band.
\end{theorem}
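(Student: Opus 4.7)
The plan is to exploit the cancellation between the growth of the Frobenius norm $\|P_n\|_F=\sqrt{n_e}$ and the size of the truncation tail in $\|\cdot\|_F$, which both scale with $n$. A linear decay rate $|[P_n]_{ij}|\le c/(|i-j|+1)$ is not absolutely summable, so an $\ell^1$-based argument in the style of Proposition \ref{ter} would fail here; however, the bound $|[P_n]_{ij}|^2\le c^2/(|i-j|+1)^2$ is summable, which makes an $\ell^2$-based argument work.

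First I would write
\begin{equation*}
\|P_n-P_n^{(m)}\|_F^2 \;=\; \sum_{|i-j|>m}\bigl|[P_n]_{ij}\bigr|^2 \;\le\; c^2\sum_{i=1}^{n}\sum_{\substack{1\le j\le n\\|i-j|>m}}\frac{1}{(|i-j|+1)^2}.
\end{equation*}
For any fixed row index $i$, the inner sum is bounded above by $2\sum_{k=m+1}^{\infty}1/(k+1)^2$, and a standard integral comparison gives $\sum_{k=m+1}^{\infty}1/(k+1)^2 \le 1/(m+1)$. Summing over the $n$ rows yields the row-independent tail estimate
\begin{equation*}
\|P_n-P_n^{(m)}\|_F^2 \;\le\; \frac{2c^2\,n}{m+1}.
\end{equation*}

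Next I would divide by $\|P_n\|_F^2=n_e=n/n_b$, which cancels the factor of $n$ in the numerator and produces the dimension-free bound
\begin{equation*}
\frac{\|P_n-P_n^{(m)}\|_F^2}{\|P_n\|_F^2} \;\le\; \frac{2c^2\,n_b}{m+1}.
\end{equation*}
Given $\epsilon>0$, it then suffices to take any integer $\bar m \ge 2c^2 n_b/\epsilon^2 - 1$, and this choice depends only on $c$, $n_b$, and $\epsilon$ — not on $n$.

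The only real obstacle is recognising the right norm in which to measure: had we tried to control the absolute error $\|P_n-P_n^{(m)}\|_F$ itself, the best one could say is $O(\sqrt{n/m})$, which blows up as $n\to\infty$ for any fixed $m$. The crucial conceptual point, and the reason the theorem is stated in relative form, is that the density matrix has $\|P_n\|_F=\sqrt{n_e}$ growing with the system size, so the relevant comparison is between two quantities that both grow like $\sqrt{n}$; linear decay is exactly the borderline rate at which this comparison stabilises in a dimension-independent way, with the constant in the bound proportional to the basis-set redundancy factor $n_b$.
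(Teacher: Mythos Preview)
Your proof is correct and follows essentially the same approach as the paper: bound $\|P_n-P_n^{(m)}\|_F^2$ via the decay hypothesis and the summability of $1/k^2$, then divide by $\|P_n\|_F^2=n_e$ to cancel the factor of $n$. The only difference is organizational: the paper sums diagonal by diagonal (the $\ell$th superdiagonal contributes at most $(n-\ell)\,c^2/(\ell+1)^2$), then bounds the resulting sum by an integral to arrive at $\|P_n-P_n^{(m)}\|_F^2/\|P_n\|_F^2 < 2c^2(n_b+1)/(m+1)$; your row-by-row argument is a bit more direct and even yields the slightly sharper constant $2c^2 n_b/(m+1)$.
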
   
\begin{proof}
We subtract $P_n^{(m)}$ from $P_n$ and compute $\|P_n - P_n^{(m)}\|_F^2$
by adding the squares of the non-zeros entries in the upper triangular
part of $P_n - P_n^{(m)}$ diagonal by diagonal and multiplying the
result by 2 (since the matrices are Hermitian). Using the
decay assumption we obtain
$$\|P_n - P_n^{(m)}\|_F^2 \leq 2c^2 \sum_{k=1}^{n-m-1}\frac{k}{(n-k+1)^2}
=2c^2 \sum_{k=1}^{n-m-1}\frac{k}{[k-(n+1)]^2}.$$
To obtain an upper bound for the right-hand side, we observe
that the function
$$f(x) = \frac{x}{(x-a)^2}, \quad a = n+1,$$
is strictly increasing and convex on the interval $[1, n-m]$.
Hence, the sum can be bounded above by the integral of $f(x)$ taken
over the same interval:
$$\sum_{k=1}^{n-m-1}\frac{k}{(n-k+1)^2} <
\int_{1}^{n-m} \frac{x}{(x-a)^2}\, dx, \quad a = n + 1.$$
Evaluating the integral and substituting $a=n+1$ in the result we obtain
$$\|P_n - P_n^{(m)}\|_F^2 < 
2c^2 \left [ \ln \left (\frac{m+1}{n}\right ) + 
(n+1)\left (\frac{1}{m+1} - \frac{1}{n}\right ) \right ].$$
Dividing by $\|P_n\|_F^2=n_e$ we find 
$$\frac{\|P_n - P_n^{(m)}\|_F^2}{\| P_n \|_F^2} < 
\frac{2c^2}{n_e} \left [ \ln \left (\frac{m+1}{n}\right ) + (n+1)\left 
(\frac{1}{m+1} - \frac{1}{n}\right )\right ] < \frac{2c^2}{n_e}\frac{n+1}{m+1}.$$
Recalling that $n=n_b\cdot n_e$, we can rewrite the last inequality as
$$\frac{\|P_n - P_n^{(m)}\|_F^2}{\| P_n \|_F^2} < \frac{2c^2}{m+1}\frac{n+1}{n_e}
= \frac{2c^2}{m+1}\left (n_b + \frac{1}{n_e}\right ) \leq \frac{2c^2}{m+1}(n_b+1),$$
a quantity which can be made arbitrarily small by taking $m$ sufficiently large.
\end{proof} 

\begin{remark}
In practice, linear decay (or even algebraic decay with a small exponent $p\ge 1$)
is too slow to be useful in the development of practical $O(n)$ algorithms.
For example, from the above estimates we obtain $\bar m = O(\epsilon ^{-2})$
which is clearly not a very encouraging result, even allowing for the fact that
the above bound may be pessimistic in general.
To date, practical linear scaling algorithms have been developed only for
density matrix sequences exhibiting exponential off-diagonal decay.
\end{remark}

In the case of exponential decay, 
one can prove the following result.

\begin{theorem}\label{expo}
Let $\{P_n\}$ be a density matrix sequence with
$\left |[P_n]_{ij}\right |\leq c\, {\rm e}^{-\alpha |i-j|}$,
where $c>0$ and $\alpha >0$ are independent of $n$.
Let $\left \{P_n^{(m)}\right \}$ be the corresponding
sequence of $m$-banded approximations.
Then there exists $k_0 >0$ independent of $n$ and $m$
such that
$$\frac{\|P_n - P_n^{(m)}\|_F^2}{\| P_n \|_F^2} \leq k_0\, 
{\rm e}^{-2\alpha m}.$$
\end{theorem}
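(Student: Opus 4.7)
The plan is to mimic the structure of the proof of Theorem \ref{linear}, but exploiting the exponential hypothesis to obtain a geometric series, whose tail is easy to bound. Since $P_n$ is Hermitian, I would first rewrite the squared Frobenius norm of the error as twice a sum over the strictly upper triangular part lying outside the band, organized by super-diagonal index $k = j-i$:
\begin{equation*}
\|P_n - P_n^{(m)}\|_F^2 \;=\; 2\sum_{k=m+1}^{n-1}\sum_{i=1}^{n-k}\left|[P_n]_{i,\,i+k}\right|^2.
\end{equation*}
Applying the pointwise bound $|[P_n]_{ij}|\le c\,e^{-\alpha|i-j|}$ and noting that the inner sum has exactly $n-k \le n$ terms, I obtain
\begin{equation*}
\|P_n - P_n^{(m)}\|_F^2 \;\le\; 2c^2\sum_{k=m+1}^{n-1}(n-k)\,e^{-2\alpha k} \;\le\; 2c^2\, n \sum_{k=m+1}^{\infty} e^{-2\alpha k}.
\end{equation*}

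Next I would sum the geometric series, yielding
\begin{equation*}
\|P_n - P_n^{(m)}\|_F^2 \;\le\; \frac{2c^2\, n\,e^{-2\alpha(m+1)}}{1-e^{-2\alpha}}.
\end{equation*}
Finally, I would divide by $\|P_n\|_F^2 = n_e = n/n_b$ (from \eqref{dens_mat_seq}), which cancels the factor of $n$ and produces
\begin{equation*}
\frac{\|P_n - P_n^{(m)}\|_F^2}{\|P_n\|_F^2} \;\le\; \frac{2c^2\, n_b\, e^{-2\alpha}}{1-e^{-2\alpha}}\,e^{-2\alpha m} \;=:\; k_0\, e^{-2\alpha m},
\end{equation*}
with $k_0 = 2c^2 n_b\,e^{-2\alpha}/(1-e^{-2\alpha})$ depending only on $c$, $\alpha$, and $n_b$, hence independent of $n$ and $m$.

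There is no real obstacle here: unlike the algebraic case of Theorem \ref{linear}, where one has to bound the sum by an integral and then track the logarithmic and reciprocal terms carefully, the geometric tail is trivially summable and the only mild point requiring attention is the cancellation of the extensive factor $n$ appearing both in the crude bound $n-k \le n$ on the diagonal length and in $\|P_n\|_F^2 = n_e = n/n_b$. This cancellation is precisely what makes the \emph{relative} Frobenius error independent of the system size and exponentially small in the bandwidth $m$.
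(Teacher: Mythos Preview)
Your proposal is correct and follows essentially the same approach the paper indicates: the paper's own proof simply reads ``Similar to that of Theorem \ref{linear}, except that it is now easy to evaluate the upper bound and the constants exactly. We omit the details,'' and your argument is precisely the natural filling-in of those details, organizing the squared Frobenius error diagonal by diagonal, summing the resulting geometric tail, and cancelling the factor $n$ against $\|P_n\|_F^2 = n_e = n/n_b$.
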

\begin{proof}
Similar to that of Theorem \ref{linear}, except that it is now easy
to evaluate the upper bound and the constants exactly. We omit the details.
\end{proof}

\begin{remark}
It is immediate to see that the foregoing bound implies the
much more favorable estimate $\bar m = O(\ln \epsilon^{-1})$.
\end{remark}

Again, similar results holds for arbitrary sparsity patterns,
replacing $|i-j|$ with the graph distance. More precisely,
the following result holds.

\begin{theorem}
Let $\{P_n\}$ be a density matrix sequence with the exponential
decay property with respect to a sequence of graphs $\{G_n\}$
having uniformly bounded maximal degree. Then, for all $\epsilon  > 0$, 
there exists a positive integer $\bar m$ independent of $n$ 
such that
$$\frac{\|P_n - P_n^{(m)}\|_F}{\|P_n\|_F} \leq \epsilon
\quad {\rm for \,\, all} \quad m\ge \bar m,$$
where $P_n^{(m)}$ is sparse, i.e., it contains only $O(n)$ non-zeros. 
\end{theorem}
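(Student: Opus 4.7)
The plan is to lift the proof of Theorem \ref{expo} from the banded setting to the sparse one by replacing $|i-j|$ with the geodetic graph distance $d_n(i,j)$, and to use the bounded-degree hypothesis to count how many vertex pairs sit at each given distance.

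First I would write
\begin{equation*}
\|P_n - P_n^{(m)}\|_F^2 \;=\; \sum_{i=1}^{n} \sum_{j:\, d_n(i,j) > m} \left|[P_n]_{ij}\right|^2 \;\le\; c^2 \sum_{i=1}^{n} \sum_{k=m+1}^{\infty} N_k(i)\, e^{-2\alpha k},
\end{equation*}
where $N_k(i) := |\{\,j : d_n(i,j) = k\,\}|$ and the inequality uses the exponential decay hypothesis \eqref{truncerr1}. Writing $D$ for the uniform bound on the maximum degree provided by hypothesis, a breadth-first-search argument starting from vertex $i$ yields the $i$- and $n$-independent estimate $N_k(i) \le D(D-1)^{k-1}$.

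Plugging this estimate into the double sum and collapsing the resulting tail of a geometric series (which converges as long as $\alpha$ is large enough that $(D-1)e^{-2\alpha}<1$) produces a bound of the form
\begin{equation*}
\|P_n - P_n^{(m)}\|_F^2 \;\le\; K_0\, n\, e^{-2\beta m}
\end{equation*}
for some $\beta > 0$ and constant $K_0$ independent of $n$ and $m$. Dividing through by $\|P_n\|_F^2 = n_e = n/n_b$ gives a relative error of order $n_b K_0 e^{-2\beta m}$, which drops below $\epsilon$ once $\bar m$ is chosen of order $\ln(1/\epsilon)$, independently of $n$. For the sparsity claim, each row of $P_n^{(m)}$ has at most $1 + \sum_{k=1}^{\bar m} D(D-1)^{k-1}$ non-zeros, a constant depending only on $D$ and $\bar m$, so $P_n^{(m)}$ has $O(n)$ non-zero entries in total.

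The main subtlety — and the point where the argument needs care — is the comparison between the decay rate $\alpha$ and the branching factor of the graphs: in the banded setting of Theorem \ref{expo} there are only $O(1)$ columns at each distance $|i-j|=k$, whereas a general bounded-degree graph can accommodate up to $(D-1)^k$ vertices at distance $k$, so the geometric series truly requires $\alpha > \tfrac{1}{2}\ln(D-1)$. In the physically relevant situation in which the graphs arise from discretizations of fixed-dimensional spatial lattices, balls grow only polynomially in the radius, and this restriction disappears; otherwise, the same estimate goes through once one absorbs the growth rate into an effective decay rate, leaving the overall shape of the bound unchanged.
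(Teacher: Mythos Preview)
Your approach is exactly the natural generalization of Theorem~\ref{expo} that the paper has in mind; the paper itself does not spell out a proof here, merely remarking that ``similar results hold for arbitrary sparsity patterns, replacing $|i-j|$ with the graph distance.'' The BFS bound $N_k(i)\le D(D-1)^{k-1}$ and the subsequent estimate are correct, and the sparsity count for $P_n^{(m)}$ is fine.

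You have in fact been more careful than the paper on one point. The condition $(D-1)e^{-2\alpha}<1$ that you flag is a genuine restriction, and the paper's own proof of Proposition~\ref{ter2} commits the analogous oversight when it passes from $\sum_{i\in K_n^m(j)}\lambda^{d_n(i,j)}$ to $\sum_{k=m+1}^n\lambda^k$ without accounting for the multiplicity of vertices at each distance. Your closing remark, however, does not fully dispose of the issue: ``absorbing the growth rate into an effective decay rate'' only produces a positive residual rate $\beta$ when $\alpha>\tfrac12\ln(D-1)$; on an expander-like graph sequence with slower decay there is nothing left to absorb and the tail bound diverges. To make the statement airtight you need either the explicit hypothesis $\alpha>\tfrac12\ln(D-1)$, or --- as you correctly note for the physically relevant case --- a subexponential (e.g., polynomial) ball-growth assumption on the $G_n$, under which the series converges for every $\alpha>0$. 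With either of those in hand, your argument is complete.
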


We consider now some of the consequences
of approximating full, but localized matrices with sparse ones.
The following quantity plays an important role in many electronic
structure codes:
$$ \langle E \rangle = {\rm Tr} (PH) = 
\varepsilon_1 + \varepsilon_2 + \cdots + \varepsilon_{n_e} ,$$
where $\varepsilon_i$ denotes the $i$th eigenvalue of the
discrete Hamiltonian $H$. Minimization of ${\rm Tr} (PH)$,
subject to the constraints $P=P^*=P^2$ and ${\rm Tr} (P) =n_e$,
is the basis of
several linear scaling algorithms; see, e.g.,
\cite{challacombe,goedecker1,LeBris,LNV,MS,NS00,niklasson}. 
Note that in the tight-binding model, and also within the
independent electron approximation, the quantity $\langle E \rangle$
represents the single-particle energy 
\cite{bates,goedecker1,niklasson,tar2}.
Now, assume that 
$\hat H \approx H$ and $\hat P\approx P$,
and define the corresponding approximation of $ \langle E \rangle$ as
$ \langle \hat E \rangle = {\rm Tr} (\hat P \hat H)$.
(We note in passing that in order to compute $\langle \hat E\rangle 
= {\rm Tr} (\hat P\hat H)$, only the entries of $\hat P$ corresponding to
non-zero entries in $\hat H$ need to be computed.)
Let $\Delta_P = \hat P - P$ and $\Delta_H = \hat H - H$.
We have
$$\langle \hat E \rangle  = {\rm Tr}[(P+\Delta_P)(H+\Delta_H)]
={\rm Tr}(PH) + {\rm Tr}(P \Delta_H ) + {\rm Tr}(\Delta_P H) + 
{\rm Tr}(\Delta_P \Delta_H).$$
Neglecting the last term, we obtain for $\delta_E = 
| \langle E \rangle -  \langle \hat E \rangle |$ the bound
$$\delta_E \le |{\rm Tr}(P \Delta_H)| + |{\rm Tr}(\Delta_P H)|.$$
Recalling that the Frobenius norm is
the matrix norm induced by the inner product $\langle A, B\rangle = {\rm Tr} (B^*A)$,
using the Cauchy--Schwarz inequality and $\|P\|_F = \sqrt{{n_e}}$ we find
$$ \delta_E \le \sqrt{{n_e}}\, \|\Delta_H\|_F + \|\Delta_P\|_F \|H\|_F.$$ 
Now, since the orthogonal projector $P$ is invariant with respect to 
scalings of the Hamiltonian, we can assume $\|H\|_F=1$, so that
$\delta_E \le \sqrt{{n_e}}\, \|\Delta_H\|_F +  \|\Delta_P\|_F$ holds.
In practice, a bound on the {\em relative} error would be more
meaningful. Unfortunately, it is not easy to obtain a rigorous bound in terms
of the relative error in the approximate projector $\hat P$. If, however, we
replace the relative error in $\langle \hat E \rangle$ with the normalized error
obtained by dividing the absolute error by the number $n_e$ of electrons, we obtain
$$\frac{\delta_E}{n_e} \le
\frac{\|\Delta_H\|_F}{\sqrt{{n_e}}}  +  \frac{\|\Delta_P\|_F} {n_e}.$$
A similar bound for $\delta_E/n_e$ that involves matrix 2-norms 
can be obtained as follows. Recall that 
$n = n_b\cdot n_e$, and that $\|A\|_F \le \sqrt{n}\|A\|_2$ for 
any $n\times n$ matrix $A$. Observing that the von Neumann trace
inequality \cite[pages 182--183]{HJTMA} implies
$|\textnormal{Tr}(P\Delta_H)|\leq\textnormal{Tr}
( P )\|\Delta_H\|_2=n_e\|\Delta_H\|_2$, we obtain 
\begin{equation}\label{err_est_F}
\frac{\delta_E}{n_e} \le  
\|\Delta_H\|_2 + \sqrt{\frac{n_b}{n_e}}\,\, \|\Delta_P\|_2.
\end{equation}
Since $n_b$ is constant, an interesting consequence of (\ref{err_est_F})
is that for large system sizes (i.e., in the limit as $n_e\to \infty$),
the normalized error in $\langle \hat E \rangle$ is essentially
determined by the truncation error in the Hamiltonian $H$ rather than
by the error in the density matrix $P$. 

On the other hand, scaling $H$ so that $\|H\|_F=1$ may not be
advisable in practice. Indeed, since the Frobenius norm of the Hamiltonian grows
unboundedly for $n_e \to \infty$, rescaling $H$ so that $\|H\|_F=1$
would lead to a loss of significant information when truncation is applied 
in the case of large systems. A more sensible scaling, which is often
used in algorithms for electronic structure computations, is to divide
$\|H\|$ by its largest eigenvalue in magnitude, so that $\|H\|_2 = 1$. This
is consistent with the assumption, usually satisfied in practice, that
the spectra of the Hamiltonians remain bounded as $n_e \to \infty$. (Note
this is the same normalization used to establish the decay bounds in 
section \ref{sec_main}.) With
this scaling we readily obtain, to first order, the bound
\begin{equation}\label{err_est_2}
\frac{\delta_E}{n_e} \le
 \|\Delta_H\|_2 + n_b\|\Delta_P\|_2 ,
\end{equation}
showing that errors in $\Delta_H$ and $\Delta_P$ enter the estimate for the 
normalized error in the objective function ${\rm Tr}(PH)$ with approximately the
same weight, since $n_b$ is a moderate constant. We also note that since
both error matrices $\Delta_H$ and $\Delta_P$ are Hermitian, 
(\ref{norm_ineq}) implies that the bounds 
(\ref{err_est_F}) and (\ref{err_est_2}) remain true if the $2$-norm is 
replaced by the $1$-norm.
We mention that the problem of the choice of norm in the
measurement of truncation errors has been discussed in \cite{RRS08,RS05}. These
authors emphasize the use of the $2$-norm, which is related to
the distance between the exact and inexact (perturbed) occupied subspaces 
${\cal X} := {\rm Range}(P)$
and $\hat{\cal X}:={\rm Range}(\hat P)$ as measured by the sine of the principal 
angle between $\cal X$ and $\hat{\cal X}$; see \cite{RRS08}.

One important practical aspect, which we do not address here, is that 
in many quantum chemistry codes the matrices have a natural block structure
(where each block corresponds, for instance, to the basis functions centered
at a given atom); hence,
dropping is usually applied to submatrices rather than to 
individual entries. Exploitation of the block structure is also
desirable in order to achieve high performance in matrix-matrix products
and other operations,
see, e.g., \cite{challacombe,challacombe2,RRS09}.

We conclude this section with a few remarks on the infinite-dimensional
case. Recall that any separable, complex Hilbert space 
$\mathscr{H}$ is isometrically 
isomorphic
to the sequence space
$${\ell }^2:= \Big \{(\xi_n) \, | \, \xi_n \in \complex \,\, 
\forall n\in \naturals \,\,\, {\rm and} \,\,\, \sum_{n=1}^\infty 
|\xi_n|^2 < \infty \Big \} \,.$$
Moreover, if $\{e_n\}$ is an orthonormal basis in $\mathscr{H}$,
to any bounded linear operator $\cal A$ on $\mathscr{H}$  
there corresponds the infinite
matrix $A=(A_{ij})$ acting on ${\ell }^2$, uniquely defined by
$A_{ij} = \langle e_j, {\cal A}e_i\rangle$. Note that
each column of $A$ must be in ${\ell }^2$, hence the entries $A_{ij}$
in each column of $A$ must go to zero for $i\to \infty$. The
same is true for the entries in each row (for $j\to \infty)$
since $A^*=(A_{ji}^*)$, the adjoint of $A$, is also a (bounded)
operator defined everywhere on ${\ell }^2$. More precisely,
for any bounded linear operator $A=(A_{ij})$ on ${\ell }^2$ 
the following bounds hold:
\begin{equation}\label{cook}
\sum_{j=1}^\infty |A_{ij}|^2 \leq \|A\|_2^2  \quad {\rm for \,\, all}
\,\,\, i \quad {\rm and}
\quad \sum_{i=1}^\infty |A_{ij}|^2 \leq \|A\|_2^2  \quad {\rm for \,\, all}
\,\,\, j\,,
\end{equation}
since $\|A\|_2 = \|A^*\|_2$.

An orthogonal projector $\cal P$ on $\mathscr{H}$ is a self-adjoint 
(${\cal P} = {\cal P}^*$), idempotent (${\cal P} = {\cal P}^2$)
linear operator. Such an operator is necessarily bounded, with
norm $\|{\cal P}\| = 1$. Hence, (\ref{cook}) implies 
\begin{equation}\label{p_cook}
\sum_{j=1}^\infty |P_{ij}|^2 \leq 1,
\end{equation}  
where $P=(P_{ij})$ denotes the matrix
representation of $\cal P$. The idempotency condition implies
$$ P_{ij} = \sum_{k=1}^{\infty} P_{ik}P_{kj}, \quad {\rm for\,\, all}
\quad i,j = 1,2,\ldots$$
In particular, for $i=j$ we get, using the hermiticity property
$P_{ij} = P_{ji}^*$:  
\begin{equation}\label{p_ii}
P_{ii} = \sum_{k=1}^{\infty} P_{ik}P_{ki} = \sum_{k=1}^{\infty}|P_{ik}|^2,
\quad {\rm for\,\, all} \quad i= 1,2,\ldots
\end{equation}
Now, since $P$ is a projector its entries satisfy
$|P_{ij}|\le 1$, therefore (\ref{p_ii}) 
is a strengthening of inequality (\ref{p_cook}).
 Note in particular that the off-diagonal
entries in the first row (or column) of $P$ must satisfy
$$\sum_{j>1}|P_{1j}|^2 \le 1 - |P_{11}|^2\,,$$
those in the second row (or column) must satisfy
$$\sum_{j>2}|P_{2j}|^2 \le 1 - |P_{22}|^2 - |P_{12}|^2\,,$$
and in general the entries $P_{ij}$ with $j>i$ must satisfy
\begin{equation}\label{proj_j}
\sum_{j>i}|P_{ij}|^2 \le 1 - \sum_{k=1}^{i}|P_{ki}|^2\quad {\rm for\,\, all}\,\,\,
i=1,2,\ldots
\end{equation}
Hence, decay in the off-diagonal entries in the $i$th row of $P$
must be fast enough for the bounds (\ref{proj_j}) to hold. In general,
however, it is not easy to quantify the asymptotic rate of decay to zero
of the off-diagonal entries in an arbitrary orthogonal projector
on ${\ell }^2$. In general, the rate of decay can be rather slow.
 In section 
\ref{metals} we will see an example of spectral projector associated with 
a very simple tridiagonal Hamiltonian for which the off-diagonal entries
decay linearly to zero.

\section{Decay results}\label{sec_main}
In this section
we present and discuss some results on the decay 
of entries for the Fermi--Dirac function applied to
Hamiltonians and for the density matrix (spectral
projector corresponding to occupied states). 
We consider both the banded case and the case of
more general sparsity patterns.
The proofs, which 
require some basic tools
from polynomial approximation theory, will be given in 
subsection \ref{proofs}.

\subsection{Bounds for the Fermi--Dirac function}\label{sec_fd}
We begin with the following result for the banded case.
As usual in this paper, in the following one should think of the positive
integer $n$ as being of the form $n=n_b\cdot n_e$ with $n_b$
constant and $n_e\to \infty$. 

\vspace{0.1in}

\begin{theorem}\label{thm1}
Let $m$ be a fixed positive integer and consider a sequence of 
matrices $\{H_n\}$ such that:
\begin{itemize}
\item[{\rm (i)}] $H_n$ is an $n\times n$ Hermitian, $m$-banded matrix 
for all $n$;
\item[{\rm (ii)}] For every $n$, all the eigenvalues of $H_n$ lie
in the interval $[-1,1]$.
\end{itemize}
For a  given Fermi level $\mu$ and inverse temperature $\beta$, 
define for each $n$ the $n\times n$ Hermitian
matrix $F_n:= f_{FD}(H_n)=\left[I_n+{\rm e}^{\beta(H_n-\mu I_n)}\right]^{-1}$.
Then there exist constants 
$c>0$ and $\alpha>0$,
independent of $n$, such that the following decay bound holds:
\begin{equation}
|[F_n]_{ij}|\leq c\, {\rm e}^{-\alpha |i-j|},\qquad i\neq j.\label{bound}
\end{equation}
The constants $c$ and $\alpha$ can be chosen as
\begin{eqnarray}
&&c=\frac{2\chi M(\chi)}{\chi-1},\qquad M(\chi)=
\max_{z\in\mathcal{E}_\chi}|f_{FD}(z)|, \label{const_c}\\
&&\alpha=\frac{1}{m}\ln\chi,\label{const_alpha}
\end{eqnarray}
for any $1<\chi<\overline{\chi}$, where
\begin{eqnarray}
\overline{\chi}=
\frac{\sqrt{\sqrt{(\beta^2(1-\mu^2)-\pi^2)^2+
4\pi^2\beta^2}-\beta^2(1-\mu^2)+\pi^2}}{\sqrt{2}\beta}+ \nonumber\\
+\frac{\sqrt{\sqrt{(\beta^2(1-\mu^2)-\pi^2)^2+
4\pi^2\beta^2}+\beta^2(1+\mu^2)+\pi^2}}
{\sqrt{2}\beta}\,, \label{horribleformula}
\end{eqnarray}
and ${\cal E}_{\chi}$ is the unique ellipse with foci in
$-1$ and $1$ with semi-axes $\kappa_1>1$ and $\kappa_2>0$, 
and $\chi= \kappa_1+\kappa_2$.
\end{theorem}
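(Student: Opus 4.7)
The plan is to combine a classical Bernstein-type approximation bound for the Fermi--Dirac function on $[-1,1]$ with the elementary fact that if $H_n$ is $m$-banded and $p_k$ is a polynomial of degree $k$, then $p_k(H_n)$ is $(km)$-banded. In particular, for any pair $(i,j)$ with $|i-j|>km$ the entry $[p_k(H_n)]_{ij}$ vanishes, so
\begin{equation*}
|[F_n]_{ij}| = |[F_n-p_k(H_n)]_{ij}| \le \|F_n-p_k(H_n)\|_2 \le \max_{x\in[-1,1]}|f_{FD}(x)-p_k(x)|,
\end{equation*}
where the last inequality uses that $H_n$ is Hermitian with spectrum in $[-1,1]$, so by the spectral theorem the $2$-norm of $g(H_n)$ equals $\max_{x\in\sigma(H_n)}|g(x)|$. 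Thus any uniform approximation estimate for $f_{FD}$ by polynomials on $[-1,1]$ transfers immediately into an off-diagonal decay estimate for the matrix entries.

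Next I would determine the largest Bernstein ellipse $\mathcal{E}_\chi$ (with foci $\pm 1$ and sum of semi-axes $\chi$) inside which $f_{FD}$ is analytic. The only singularities of $f_{FD}(z)=[1+e^{\beta(z-\mu)}]^{-1}$ are simple poles at $z=\mu+i\pi(2k+1)/\beta$; the ones nearest the real segment are at $z=\mu\pm i\pi/\beta$. Writing the semi-axes of $\mathcal{E}_\chi$ as $\kappa_1=(\chi+\chi^{-1})/2$ and $\kappa_2=(\chi-\chi^{-1})/2$, so that $\kappa_1^2-\kappa_2^2=1$, I would impose that $(\mu,\pi/\beta)$ lies on $\mathcal{E}_{\overline\chi}$, i.e.
\begin{equation*}
\frac{\mu^2}{\kappa_1^2}+\frac{\pi^2/\beta^2}{\kappa_2^2}=1,
\end{equation*}
and solve the resulting quadratic in $\kappa_2^2$. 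This yields closed-form expressions for $\kappa_1$ and $\kappa_2$ and hence for $\overline\chi=\kappa_1+\kappa_2$; after multiplying through by $\beta^2$ and simplifying, the formula \eqref{horribleformula} drops out. Verifying that this algebraic manipulation matches \eqref{horribleformula} is the most tedious step, but it is a purely mechanical calculation.

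With $\overline\chi$ in hand, for any $1<\chi<\overline\chi$ the function $f_{FD}$ is analytic on and inside $\mathcal{E}_\chi$, so Bernstein's theorem gives a polynomial $p_k$ of degree $k$ with
\begin{equation*}
\max_{x\in[-1,1]}|f_{FD}(x)-p_k(x)| \le \frac{2M(\chi)}{\chi^k(\chi-1)}, \qquad M(\chi)=\max_{z\in\mathcal{E}_\chi}|f_{FD}(z)|.
\end{equation*}
For $i\neq j$ I would then choose $k$ to be the largest integer with $km<|i-j|$, namely $k=\lceil|i-j|/m\rceil-1\ge|i-j|/m-1$. The display above becomes
\begin{equation*}
|[F_n]_{ij}| \le \frac{2M(\chi)}{\chi^k(\chi-1)} \le \frac{2\chi M(\chi)}{\chi-1}\,\chi^{-|i-j|/m} = c\,e^{-\alpha|i-j|}
\end{equation*}
with $c$ and $\alpha$ as in \eqref{const_c}--\eqref{const_alpha}. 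Note that $c$ and $\alpha$ depend only on $\chi,\beta,\mu,m$ and $M(\chi)$, none of which depend on $n$, which is exactly the $n$-independence claimed.

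The main obstacle is not the approximation-theoretic side, which is essentially standard once the relevant tools are recalled, but rather the identification of the optimal ellipse parameter $\overline\chi$: one must locate the closest pole of $f_{FD}$ to $[-1,1]$ and translate that analyticity radius into the Bernstein parameter $\chi=\kappa_1+\kappa_2$ via the quadratic equation relating $\kappa_1,\kappa_2$. Everything else — the spectral-theorem bound on $\|g(H_n)\|_2$, the bandedness of $p_k(H_n)$, and the conversion of the degree-based estimate into a geodesic-distance estimate — is routine.
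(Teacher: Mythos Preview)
Your proposal is correct and follows essentially the same approach as the paper: locate the nearest poles $\mu\pm i\pi/\beta$ of $f_{FD}$ to determine the regularity ellipse $\mathcal{E}_{\overline\chi}$, apply Bernstein's theorem to get $E_k(f_{FD})\le 2M(\chi)/[\chi^k(\chi-1)]$, use the spectral theorem to bound $\|f_{FD}(H_n)-p_k(H_n)\|_2$, and exploit that $p_k(H_n)$ is $(km)$-banded to kill the $(i,j)$ entry for $|i-j|>km$. The paper phrases the choice of $k$ as ``$mk<|i-j|\le m(k+1)$'' rather than your $k=\lceil|i-j|/m\rceil-1$, but these are equivalent and lead to the identical constants.
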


\vspace{0.1in}

\begin{remark}
The ellipse ${\cal E}_{\chi}$ in the previous theorem is unique
because the identity $\sqrt{\kappa_1^2 - \kappa_2^2} = 1$, valid
for any ellipse with foci in $1$ and $-1$, 
implies $\kappa_1 - \kappa_2 = 1/(\kappa_1+\kappa_2)$,
hence the parameter $\chi = \kappa_1+\kappa_2$ alone completely
characterizes the ellipse.
\end{remark}

\vspace{0.1in}

\begin{remark}
Theorem \ref{thm1} can be immediately generalized 
to the case where the spectra of the sequence $\{H_n\}$ are 
contained in an interval $[a,b]$,
for any $a<b\in\mathbb{R}$. It suffices to shift 
and scale each Hamiltonian:
$$
\widehat{H}_n=\frac{2}{b-a}H_n-\frac{a+b}{b-a}\, I_n,
$$
so that $\widehat{H}_n$ has spectrum in $[-1,1]$.
For the decay bounds to be independent of $n$,
however, $a$ and $b$ must be independent of $n$.
\end{remark}

\vspace{0.1in}

\begin{figure}[t!]
\begin{center}
\vspace{-1.1in}
\begin{center}
\includegraphics[width=1.0\textwidth]{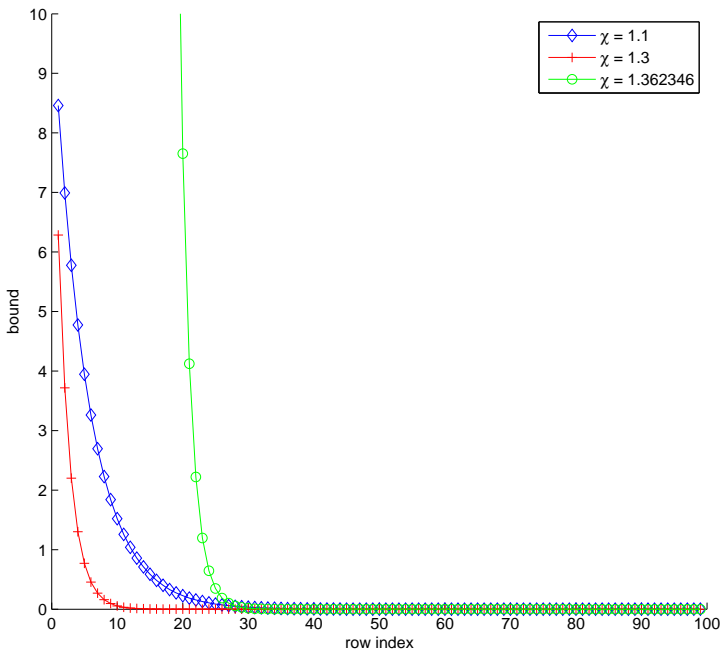}
\end{center}
\vspace{-1.9in}
\caption{Bounds \eqref{bound} with $\mu=0$ and $\beta=10$, 
for three different values of $\chi$.}\label{fig1} 
\end{center}
\end{figure}

\begin{figure}
\begin{center}
\vspace{-1.7in}
\includegraphics[width=1.0\textwidth]{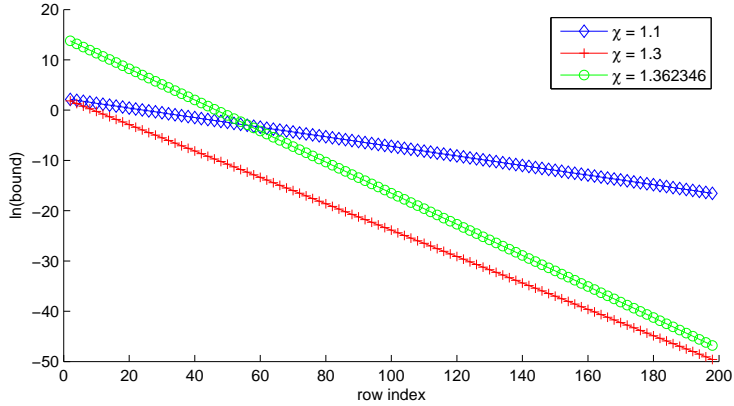}
\vspace{-2.6in}
\caption{Logarithmic plot of the bounds \eqref{bound} with 
$\mu=0$ and $\beta=10$.}\label{fig2}
\end{center}
\end{figure}

It is important to note that there is a certain 
amount of arbitrariness in the choice of $\chi$, 
and therefore of $c$ and $\alpha$.
If one is mainly interested in a fast asymptotic 
decay behavior (i.e., for sufficiently large $|i-j|$), 
it is desirable to choose $\chi$ as 
large as possible. On the other hand,
if $\chi$ is very close to $\overline{\chi}$ then 
the constant $c$ is likely to be quite large and the 
bounds might be too pessimistic.
Let us look at an example. Take $\mu=0$; in this case we 
have 
$$
\overline{\chi}=\left (\pi+\sqrt{\beta^2+\pi^2}\right )/\beta \quad {\rm and}
\quad M(\chi)=\left |1/\left (1+{\rm e}^{\beta \zeta}\right )\right |\,, \quad 
{\rm where} \quad  
\zeta=\iu\,\frac{\chi^2-1}{2\chi}\,.
$$
Note that, in agreement with experience, decay is faster for smaller $\beta$
(i.e., higher electronic temperatures); see sections \ref{proofs}  
and \ref{dep_T} for additional details and discussion.
Figures \ref{fig1} and 
\ref{fig2} show the behavior of the bound
given by \eqref{bound} on the first row of a $200\times 200$ 
tridiagonal matrix ($m=1$) 
for $\beta=10$ and for three values of $\chi$. 
It is easy to see from the plots
that the asymptotic behavior of the bounds improves as 
$\chi$ increases; however, the bound given by $\chi=1.362346$ 
is less useful 
than the bound given by $\chi = 1.3$. Figure \ref{fig3} 
is a plot of $c$ as a function of $\chi$ and it shows that 
$c$ grows very large
 when $\chi$ is close to $\overline{\chi}$. This is expected, 
since $f_{FD}(z)$ has two poles, given by $z=\pm \iu\,\pi/\beta$
on the regularity
ellipse ${\cal E}_{\overline{\chi}}$.
 It is clear from Figures \ref{fig1} and \ref{fig2} that 
$\chi=1.3$ is the best choice among the three proposed values, if
one is interested in determining a bandwidth outside of which
the entries of $F_n$ can be safely neglected.  
As already observed in \cite{benzirazouk,nadersthesis}, 
improved bounds can be obtained
by adaptively choosing
different (typically increasing) values of $\chi$ as $|i-j|$ 
grows, and by using as a bound 
the (lower) envelope of the
curves plotted in Figure \ref{fig5}, which 
shows the behavior of the decay bounds for several values 
of $\chi\in(1.1,\overline{\chi})$, with
$\overline{\chi}\approx 1.3623463$.

\begin{figure}[t!]
\begin{center}
\vspace{-1.75in}
\includegraphics[width=1.0\textwidth]{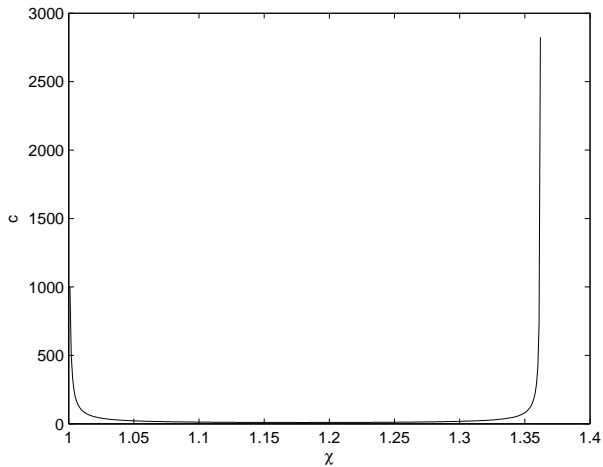}
\vspace{-2.6in}
\caption{Plot of $c$ as a function of $\chi$ with $\mu=0$ and 
$\beta=10$.}\label{fig3}
\end{center}
\end{figure}


\begin{figure}
\vspace{-2.05in}
\begin{center}
\includegraphics[width=1.\textwidth]{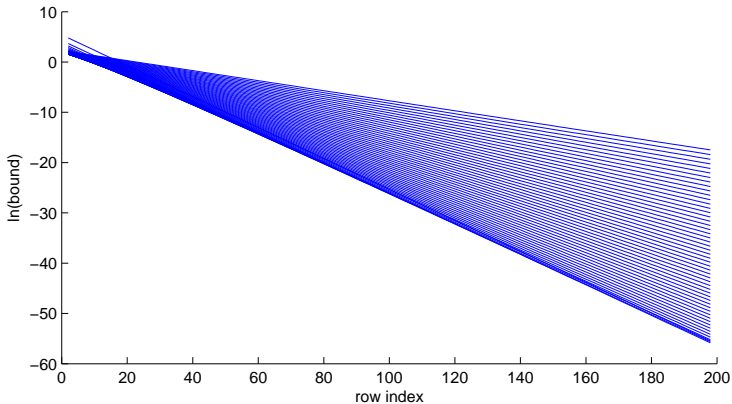}
\vspace{-2.65in}
\caption{Logarithmic plot of the bounds \eqref{bound} with 
$\mu=0$ and $\beta=10$, for several values of $\chi$.}\label{fig5}
\end{center}
\end{figure}

The results of Theorem \ref{thm1} can be generalized to 
the case of Hamiltonians with rather general sparsity patterns; 
see \cite{benzirazouk,cramer,nadersthesis}.
To this end, we make use of the notion of geodetic 
distance in a graph already used in section \ref{trunc}.
The following result holds.


\begin{theorem}\label{thm2}
Consider a sequence of matrices $\{H_n\}$ such that:
\begin{itemize}
\item[{\rm (i)}] $H_n$ is an 
$n\times n$ Hermitian matrix for all $n$;
\item[{\rm (ii)}] the spectra $\sigma(H_n)$ are uniformly bounded 
and contained in $[-1,1]$ for all $n$.
\end{itemize}
Let $d_n(i,j)$ be the graph distance associated with $H_n$. Then
 the following decay bound holds:
\begin{equation}
|[F_n]_{ij}|\leq c\, {\rm e}^{-\theta d_n(i,j)},\qquad 
i\neq j,\label{generalbound}
\end{equation}
where $\theta=\ln \chi$ and the remaining notation and 
choice of constants are as in Theorem \ref{thm1}.
\end{theorem}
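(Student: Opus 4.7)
The plan is to mimic the proof of Theorem \ref{thm1}, replacing the bandwidth argument (which governs how fast zero entries reappear in successive powers of $H_n$) with a graph-distance argument. The single structural fact we need is this: if $H_n$ has associated graph $G_n$ with geodetic distance $d_n(\cdot,\cdot)$, then for any nonnegative integer $k$ and any polynomial $p_k$ of degree at most $k$, the matrix $p_k(H_n)$ satisfies $[p_k(H_n)]_{ij}=0$ whenever $d_n(i,j)>k$. This is an immediate induction on $k$: by definition of the graph, $[H_n]_{ij}\neq 0$ forces $d_n(i,j)\leq 1$, and the $(i,j)$ entry of $H_n^{\ell+1}=H_n\cdot H_n^{\ell}$ is a sum over intermediate indices $s$ of $[H_n]_{is}[H_n^{\ell}]_{sj}$, which vanishes unless there is a path of length at most $\ell+1$ from $i$ to $j$. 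In the banded setting of Theorem \ref{thm1} this shrinks to the familiar statement that $H_n^\ell$ is $\ell m$-banded, which is where the $1/m$ factor in \eqref{const_alpha} came from; for a general graph we simply lose that factor.

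Next I would import, unchanged, the polynomial approximation ingredient used for Theorem \ref{thm1}: since $f_{FD}(z)=1/(1+e^{\beta(z-\mu)})$ is analytic in the open Bernstein ellipse $\mathcal{E}_{\overline{\chi}}$ with foci at $\pm 1$ (the nearest singularities lying at $z=\mu\pm \iu\pi/\beta$, giving rise to the formula \eqref{horribleformula} for $\overline{\chi}$), Bernstein's theorem provides, for each $1<\chi<\overline{\chi}$, a sequence of polynomials $p_k$ of degree $k$ such that
\begin{equation*}
\max_{x\in[-1,1]}|f_{FD}(x)-p_k(x)|\;\leq\;\frac{2M(\chi)}{\chi-1}\,\chi^{-k},
\end{equation*}
with $M(\chi)=\max_{z\in\mathcal{E}_\chi}|f_{FD}(z)|$. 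Because each $H_n$ is Hermitian with spectrum in $[-1,1]$, the spectral theorem gives
\begin{equation*}
\|F_n-p_k(H_n)\|_2\;=\;\max_{\lambda\in\sigma(H_n)}|f_{FD}(\lambda)-p_k(\lambda)|\;\leq\;\frac{2M(\chi)}{\chi-1}\,\chi^{-k}.
\end{equation*}

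Now I would combine the two ingredients. Fix $i\neq j$ and set $k=d_n(i,j)-1$. By the graph-distance observation, $[p_k(H_n)]_{ij}=0$, so
\begin{equation*}
|[F_n]_{ij}|\;=\;|[F_n-p_k(H_n)]_{ij}|\;\leq\;\|F_n-p_k(H_n)\|_2\;\leq\;\frac{2M(\chi)}{\chi-1}\,\chi^{-(d_n(i,j)-1)}.
\end{equation*}
Rewriting the right-hand side as $c\,\chi^{-d_n(i,j)}=c\,e^{-\theta d_n(i,j)}$ with $c=\frac{2\chi M(\chi)}{\chi-1}$ and $\theta=\ln\chi$ yields \eqref{generalbound} with the advertised constants.

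I do not anticipate any genuine obstacle: the analytic-approximation half is identical to the banded case and the combinatorial half is the only new ingredient, which is elementary. The one point that deserves careful writing is the step $k=d_n(i,j)-1\geq 0$, which requires $i\neq j$ (so that $d_n(i,j)\geq 1$); the diagonal entries are excluded from the statement for exactly this reason. Note also that, since the bound depends on the graph only through $d_n(i,j)$ and on the spectral data only through the enclosure $[-1,1]$, it is automatically uniform in $n$ and requires no hypothesis on the sparsity structure beyond what is needed to make graph distances meaningful.
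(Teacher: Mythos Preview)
Your proposal is correct and follows essentially the same approach as the paper's own proof: the paper likewise observes that $[p_k(H_n)]_{ij}=0$ whenever $d_n(i,j)>k$, sets $d_n(i,j)=k+1$, and feeds this into the Bernstein bound already established for Theorem~\ref{thm1} to obtain $|[F_n]_{ij}|\le c\,\chi^{-d_n(i,j)}=c\,{\rm e}^{-\theta d_n(i,j)}$. Your write-up is in fact more detailed than the paper's (you spell out the induction for the vanishing of $[H_n^k]_{ij}$ and flag the $i\neq j$ requirement explicitly), but the argument is the same.
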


We remark that in order for the bound (\ref{generalbound}) to be meaningful
from the point of view of linear scaling, we need to impose some restrictions
on the asymptotic sparsity of the graph sequence $\{G_n\}$. As discussed
in section \ref{trunc}, $O(n)$ approximations of $F_n$ are possible
if the graphs $G_n$ have maximum degree uniformly bounded with respect to $n$.
This guarantees that 
the distance $d_n(i,j)$ grows
unboundedly as $|i-j|$ does, at a rate independent of $n$ for $n\to \infty$.

\subsection{Density matrix decay for systems with gap}\label{sec_density}
The previous results establish exponential decay bounds for the Fermi--Dirac
function of general localized Hamiltonians
and thus for density matrices of arbitrary systems
at positive electronic temperature. In this subsection we consider 
the case of gapped systems (like insulators) at zero temperature.
In this case, as we know, the density matrix is the spectral
projector onto the occupied subspace. As an example, we consider
the density matrix corresponding to 
the linear alkane n-Dopentacontane ${\rm C}_{52}{\rm H}_{106}$
composed of 52 Carbon and 106 Hydrogen atoms,
discretized in a Gaussian-type orbital basis. 
The number of
occupied states is 209, or half the total number of
electrons in the system.\footnote{Here
spin is being taken into account, so that the density
kernel is given by
$\rho({\bf r},{\bf r}') = 2\sum_{i=1}^{n_e/2}\psi_i({\bf r})
\psi_i({\bf r}')^*$; see, e.g., \cite[page 10]{march}.}
The corresponding Hamiltonian in the original non-orthogonal basis
is displayed in Fig.~\ref{fig_H_jacek} (top) and the
`orthogonalized' Hamiltonian $\tilde H$
is shown in Fig.~\ref{fig_H_jacek} (bottom).
Fig.~\ref{fig_projector_jacek} displays the zero temperature
density matrix, which is seen to decay exponentially away
from the main diagonal. Comparing 
Fig.~\ref{fig_projector_jacek} and Fig.~\ref{fig_H_jacek},
we can see that for a truncation level of $10^{-8}$, the bandwidth
of the density matrix is only slightly larger than that of the
Hamiltonian. The eigenvalue spectrum of the Hamiltonian, scaled
and shifted so that its spectrum is contained in the interval
$[-1,1]$, is shown
in Fig.~\ref{spectrum_jacek}.  One can clearly see a large gap
($\approx 1.4$) between the 52 low-lying eigenvalues corresponding
to the core electrons in the system, as well as the smaller HOMO-LUMO
gap ($\approx 0.1$) separating the 209 occupied states from the virtual
(unoccupied) ones.  It is worth emphasizing that the exponential decay of
the density matrix is independent of the size of the system; that is,
if the alkane chain was made arbitrarily long by adding C and H
atoms to it, the density matrix would be of course much larger in size
but the bandwidth would remain virtually unchanged for the same
truncation level, due to the fact that the bandwidth and
the HOMO-LUMO gap of the Hamiltonian do not appreciably
change as the number of particles increases.
It is precisely this independence of the rate of decay 
(hence, of the bandwidth)
on system size that makes $O(n)$ approximations possible 
(and competitive) for large $n$.

Let us now see how Theorem \ref{thm1} can be used to prove 
decay bounds on the entries of density matrices. Let $H$ 
be the discrete Hamiltonian
associated with a certain physical system and let 
$\mu$ be the Fermi level of interest for this system. We assume that the
spectrum of $H$ has a gap $\gamma$ around $\mu$, that is, 
we have $\gamma=\varepsilon^+-{\varepsilon}^->0$, 
where ${\varepsilon}^+$ is the
smallest eigenvalue of $H$ to the right of $\mu$ and $
{\varepsilon}^-$ is the largest eigenvalue of $H$ to 
the left of $\mu$. In the particular case of the HOMO-LUMO gap,
we have ${\varepsilon}^-={\varepsilon}_{n_e}$ and
${\varepsilon}^+={\varepsilon}_{n_e+1}$.

\begin{figure}[t!]
\vspace{-1.1in}
\begin{center}
\includegraphics[width=0.65\textwidth]{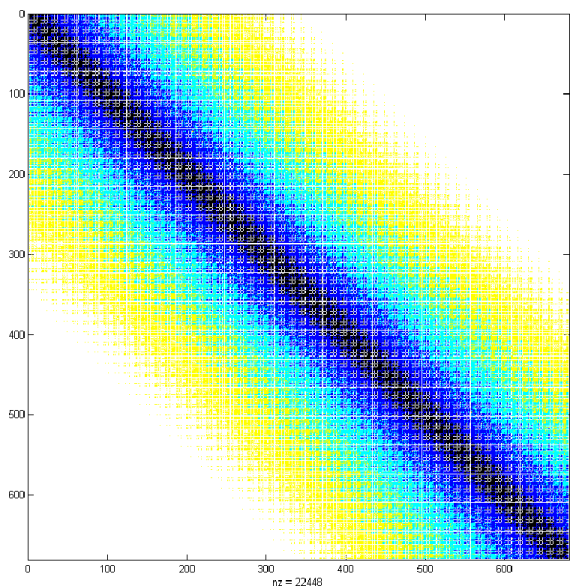}
\end{center}
\vspace{-0.1in}
\caption{Magnitude of the entries in the density matrix
for the linear alkane $C_{52}H_{106}$ chain, with 209 occupied states. 
 White: $<10^{-8}$; yellow:
$10^{-8}-10^{-6}$; green: $10^{-6}-10^{-4}$;
blue: $10^{-4}-10^{-2}$; black: $>10^{-2}$.
 Note: {\tt nz} refers to the number of
`black' entries.}\label{fig_projector_jacek}
\end{figure}

\begin{figure}[t!]
\begin{center}
\includegraphics[width=0.95\textwidth]{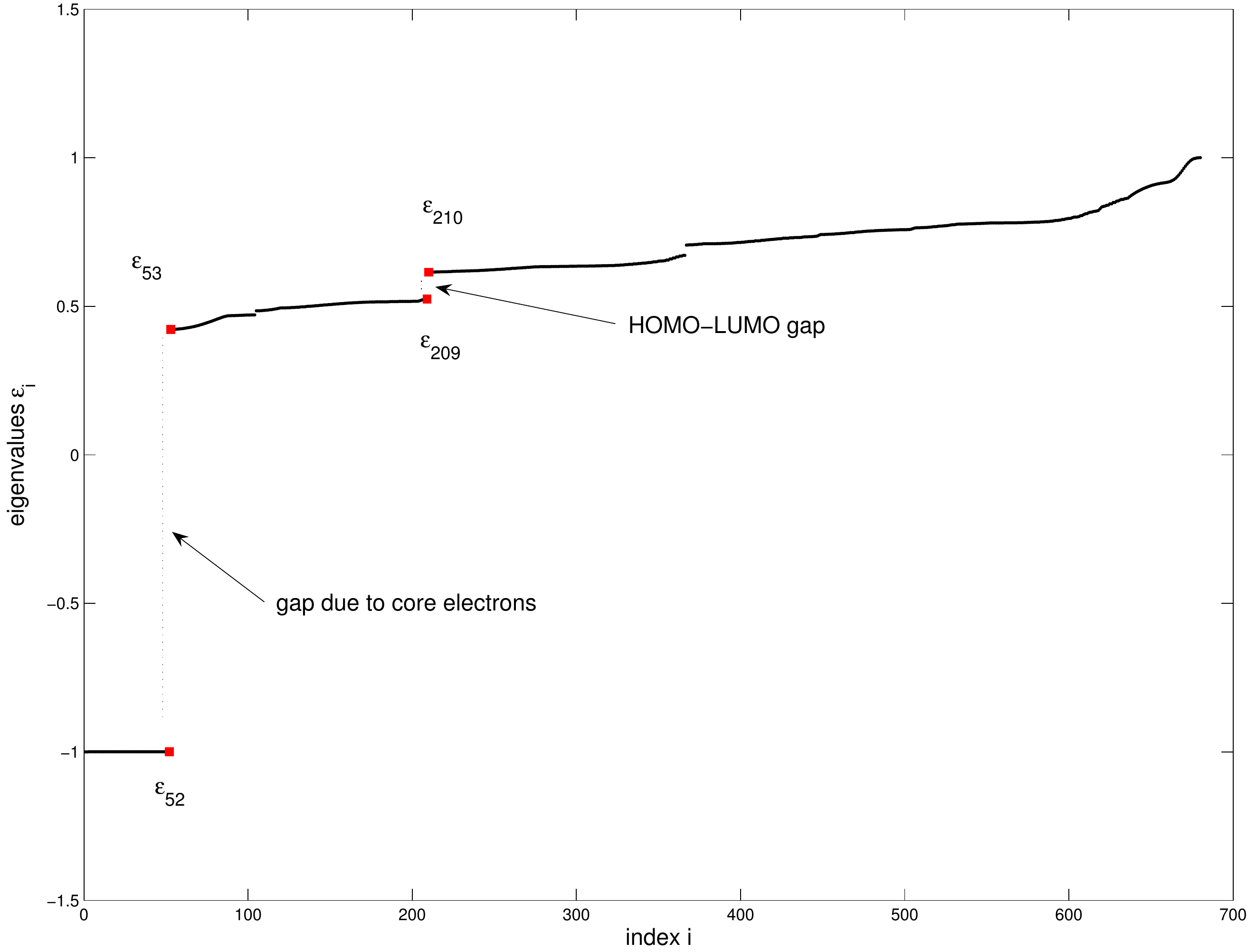}
\end{center}
\vspace{-0.1in}
\caption{Spectrum of the Hamiltonian for $C_{52}H_{106}$.}\label{spectrum_jacek}
\end{figure}

The Fermi--Dirac function can be used to approximate the 
Heaviside function; 
the larger is $\beta$, the better the approximation.
More precisely, the following result is easy to prove (see \cite{nadersthesis}):

\vspace{0.1in}

\begin{prop}\label{prop1}
Let $\delta>0$ be given. If $\beta$ is such that
\begin{equation}
\beta\geq \frac{2}{\gamma} \ln\Big(\frac{1-\delta}{\delta}\Big), \label{approx}
\end{equation}
then $1-f_{FD}({\varepsilon}^-)\leq\delta$ and $
f_{FD}({\varepsilon}^+)\leq\delta$.
\end{prop}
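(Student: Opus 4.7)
The plan is essentially a direct calculation based on the monotonicity of the Fermi--Dirac function, so I expect no serious obstacle; the only non-trivial point is a placement assumption on the Fermi level $\mu$. First, I would write out the two quantities explicitly from the definition $f_{FD}(x) = \left[1+{\rm e}^{\beta(x-\mu)}\right]^{-1}$, obtaining
\begin{equation*}
f_{FD}({\varepsilon}^+) = \frac{1}{1+{\rm e}^{\beta({\varepsilon}^+ - \mu)}}, \qquad
1 - f_{FD}({\varepsilon}^-) = \frac{1}{1+{\rm e}^{\beta(\mu - {\varepsilon}^-)}}.
\end{equation*}
Each of these has the form $1/(1+{\rm e}^{t})$, which is a strictly decreasing function of $t$, and satisfies $1/(1+{\rm e}^{t}) \le \delta$ iff ${\rm e}^{t} \ge (1-\delta)/\delta$, i.e., iff $t \ge \ln((1-\delta)/\delta)$. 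Therefore the two desired inequalities are equivalent, respectively, to
\begin{equation*}
\beta({\varepsilon}^+ - \mu) \ge \ln\!\Big(\frac{1-\delta}{\delta}\Big), \qquad
\beta(\mu - {\varepsilon}^-) \ge \ln\!\Big(\frac{1-\delta}{\delta}\Big).
\end{equation*}

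Next, I would invoke the natural placement of the Fermi level at the midpoint of the gap, $\mu = (\varepsilon^- + \varepsilon^+)/2$, so that both distances equal $\gamma/2$. Under this choice the two inequalities collapse into the single condition $\beta\,\gamma/2 \ge \ln((1-\delta)/\delta)$, which is exactly (\ref{approx}), and the proposition follows immediately. If one prefers not to assume midpoint placement, the same argument shows that the conclusion holds provided $\beta \cdot \min(\mu - \varepsilon^-, \varepsilon^+ - \mu) \ge \ln((1-\delta)/\delta)$, and the midpoint choice minimizes the required $\beta$ for a given gap $\gamma$, yielding precisely the threshold in (\ref{approx}).

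The only step that merits care is the placement assumption on $\mu$: the bound (\ref{approx}) is tight exactly when $\mu$ is positioned so that it lies at distance $\gamma/2$ from each of $\varepsilon^-$ and $\varepsilon^+$. No sophisticated tools are needed beyond the monotonicity of $f_{FD}$ and elementary algebra; the whole argument is a one-paragraph computation.
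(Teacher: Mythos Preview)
Your proof is correct and follows essentially the same direct approach as the paper: rewrite $1-f_{FD}(\varepsilon^-)$ and $f_{FD}(\varepsilon^+)$ in the form $1/(1+{\rm e}^t)$ and use monotonicity to reduce both inequalities to $\beta\cdot(\gamma/2)\ge \ln((1-\delta)/\delta)$. Your observation about the placement of $\mu$ is apt; the paper tacitly takes $\mu$ at the midpoint of the gap (this is made explicit later, where the authors set $\mu=(g_1+g_2)/2$ and $\gamma=g_2-\mu=\mu-g_1$), which is exactly the assumption needed for the stated threshold on $\beta$ to suffice.
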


\begin{figure}
\begin{center}
\includegraphics[width=0.95\textwidth]{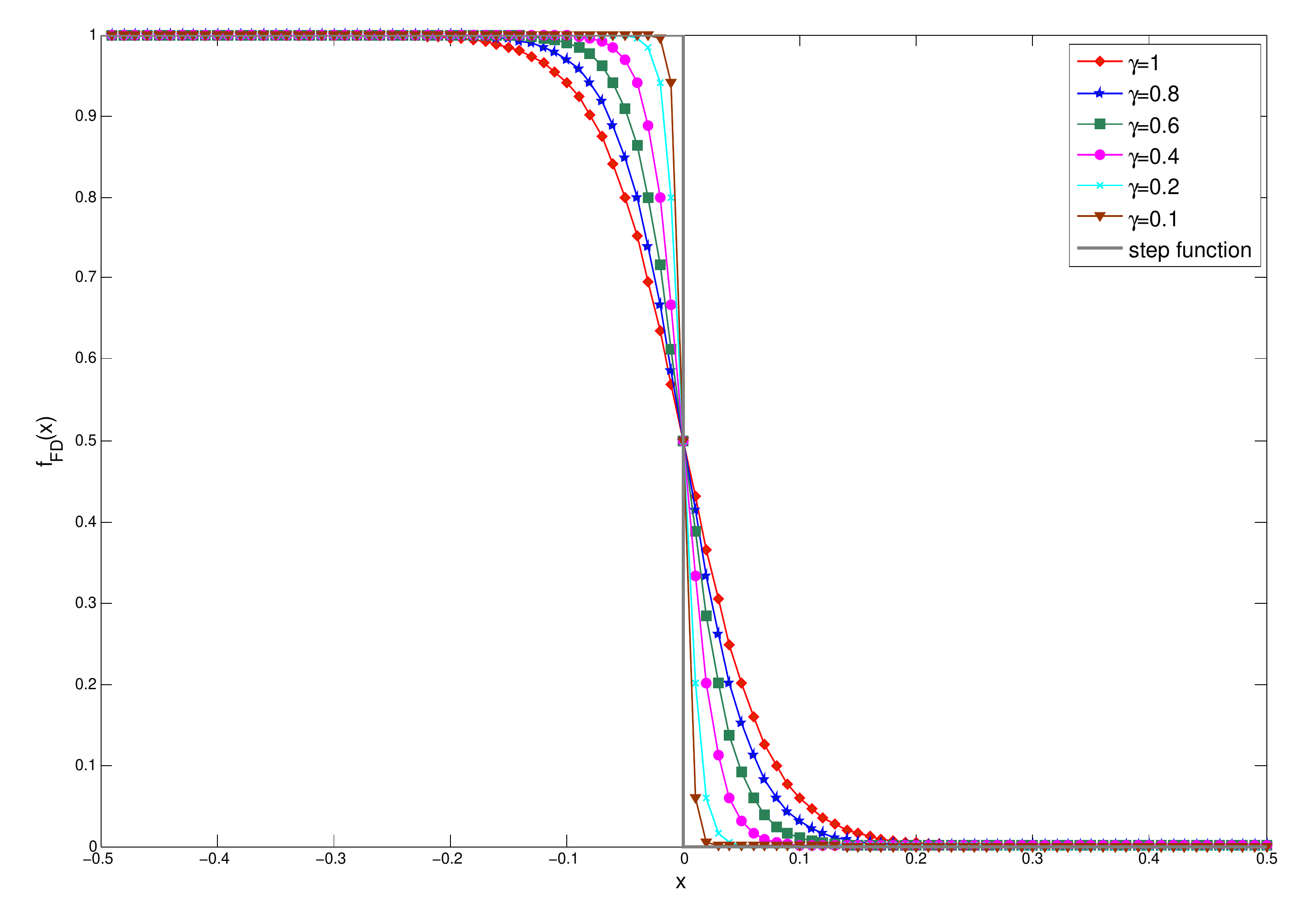}
\vspace{-0.15in}
\caption{Approximations of Heaviside function
by Fermi--Dirac function ($\mu=0$) for different values of
$\gamma$ and $\delta = 10^{-6}$.}\label{FD}
\end{center}
\end{figure}

\begin{figure}
\begin{center}
\includegraphics[width=0.95\textwidth]{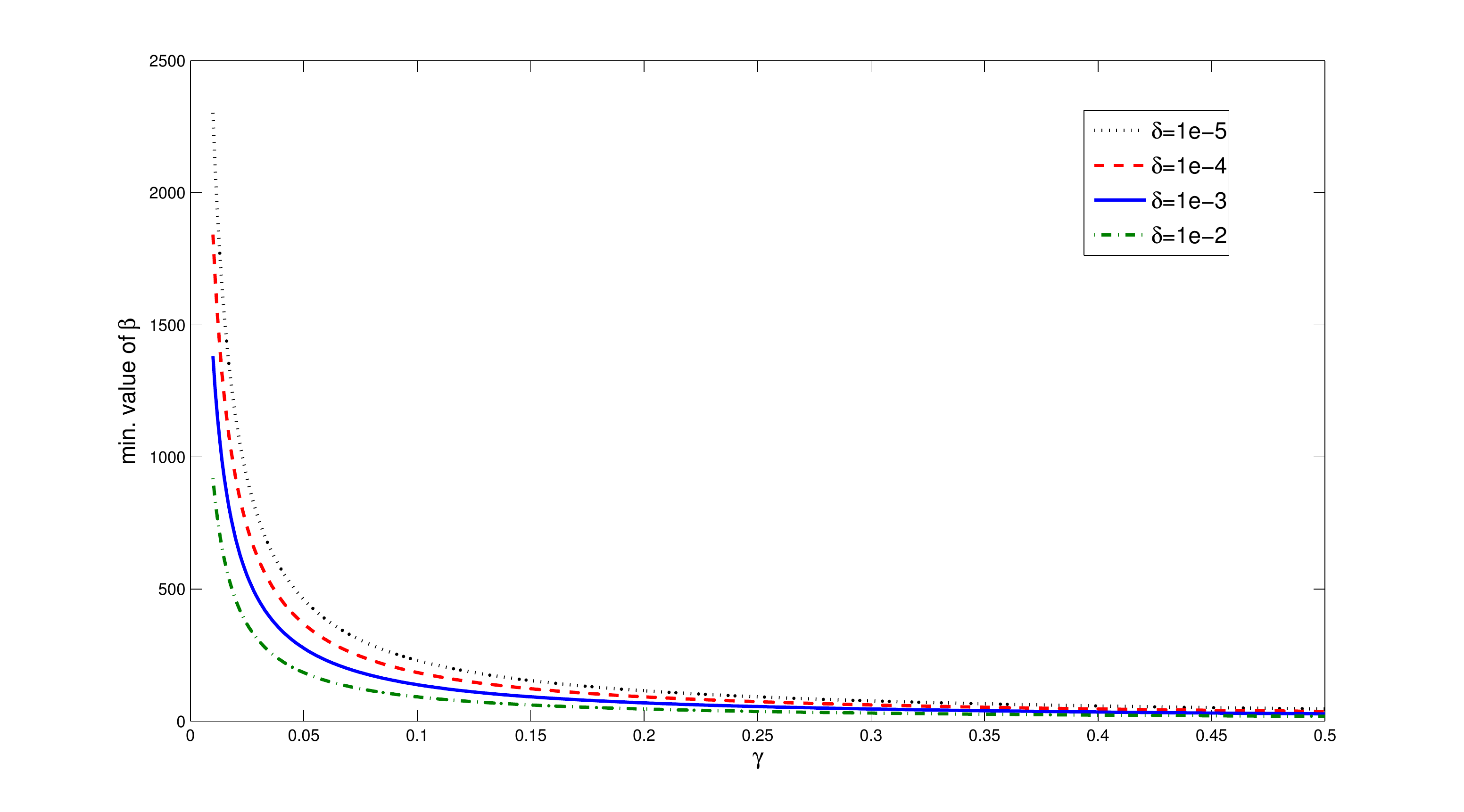}
\vspace{-0.2in}
\caption{Behavior of the minimum acceptable value of $\beta$ as a function of $\gamma$, for different values of $\delta$.}\label{betagamma}
\end{center}
\end{figure}

\vspace{0.1in}

In Fig.~\ref{FD} we show Fermi--Dirac approximations
to the Heaviside function (with a jump at $\mu=0$) for different
values of $\gamma$ between $0.1$ and $1$, where $\beta$ has
been chosen so as to reduce the error in Proposition \ref{prop1}
above the value $\delta = 10^{-6}$. The behavior of $\beta$ as
a function of $\gamma$ according to \eqref{approx} is plotted in Fig.~\ref{betagamma}. 

As a consequence of Theorem \ref{thm1} and Proposition \ref{prop1} we have:

\vspace{0.1in}

\begin{cor}\label{cor1}
Let $n_b$ be a fixed positive integer and $n=n_b\cdot n_e$, 
where the integers $n_e$ form a monotonically
increasing sequence. Let $\{H_n\}$ be a sequence of 
Hermitian $n\times n$ matrices with the following properties:
\begin{enumerate}
\item Each $H_n$ has bandwidth $m$ independent of $n$;
\item There exist two fixed intervals $I_1=[-1,a], 
I_2=[b,1]\subset\mathbb{R}$ with $\gamma=b-a>0$, such that 
for all $n=n_b\cdot n_e$,
$I_1$ contains the smallest $n_e$ eigenvalues of $H_n$ 
(counted with their multiplicities) and $I_2$ contains 
the remaining $n-n_e$ eigenvalues.
\end{enumerate}
Let $P_n$ denote the 
$n\times n$ spectral projector 
onto the subspace spanned by the eigenvectors associated 
with the $n_e$ smallest
eigenvalues of $H_n$, for each $n$. Let $\delta>0$ be 
arbitrary. Then there exist constants $c>0, \alpha>0$ 
independent of $n$ such that
\begin{equation}
|[P_n]_{ij}|\leq 
\min\left \{1,c\, {\rm e}^{-\alpha |i-j|}\right \}+\delta , 
\qquad {\rm for \,\, all} \quad i\neq j.\label{bound_band}
\end{equation}
The constants $c$ and $\alpha$ can be computed from 
\eqref{const_c} and \eqref{const_alpha}, where $\chi$ is 
chosen in the interval
$(1,\overline{\chi})$, with $\overline{\chi}$ given by 
\eqref{horribleformula} and $\beta$ such that \eqref{approx} holds.
\end{cor}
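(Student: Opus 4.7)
The plan is to realize $P_n$ as $h(H_n)$, where $h$ is the Heaviside step function with jump at some $\mu\in(a,b)$, and to compare it with $f_{FD}(H_n)$, to which Theorem \ref{thm1} applies directly. The exponential off-diagonal decay of $P_n$ will then come from the decay of $f_{FD}(H_n)$, while the additive error $\delta$ will absorb the spectral mismatch between $h$ and $f_{FD}$. The key observation is that, by hypothesis 2, the spectra $\sigma(H_n)$ avoid the open interval $(a,b)$ uniformly in $n$, so the scalar approximation $h\approx f_{FD}$ only needs to be accurate on $[-1,a]\cup[b,1]$.

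Concretely, I would first fix any $\mu\in(a,b)$ and choose $\beta$ according to Proposition \ref{prop1} with gap $\gamma=b-a$, so that $1-f_{FD}(a)\le\delta$ and $f_{FD}(b)\le\delta$; since $\gamma$ is independent of $n$, the same $\beta$ works for every $H_n$. Monotonicity of $f_{FD}$ then yields $|h(\varepsilon)-f_{FD}(\varepsilon)|\le\delta$ for every $\varepsilon\in[-1,a]\cup[b,1]$, hence for every eigenvalue of every $H_n$. Since each $H_n$ is Hermitian, the spectral theorem gives
\[
\|P_n-f_{FD}(H_n)\|_2=\max_i|h(\varepsilon_i^{(n)})-f_{FD}(\varepsilon_i^{(n)})|\le\delta,
\]
uniformly in $n$. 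Combining the elementary bound $|[A]_{ij}|\le\|A\|_2$ with Theorem \ref{thm1} (applied with the chosen $\beta$ and $\mu$, which deliver constants $c,\alpha$ depending only on the bandwidth $m$, on $\beta$ and $\mu$, but not on $n$), the triangle inequality gives, for $i\ne j$,
\[
|[P_n]_{ij}|\le |[f_{FD}(H_n)]_{ij}|+\|P_n-f_{FD}(H_n)\|_2\le c\,\mathrm{e}^{-\alpha|i-j|}+\delta.
\]
The competing trivial bound $|[P_n]_{ij}|\le 1$, valid because $P_n$ is an orthogonal projector with $\|P_n\|_2=1$, then yields the stated $\min\{1,c\,\mathrm{e}^{-\alpha|i-j|}\}+\delta$. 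Formulas \eqref{const_c}, \eqref{const_alpha} and \eqref{horribleformula} carry over unchanged since Theorem \ref{thm1} is invoked verbatim.

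There is no deep obstacle; the only point that deserves care is the uniformity of the constants in $n$. This is guaranteed by the two standing hypotheses of the corollary: the bandwidth $m$ is fixed, so Theorem \ref{thm1} supplies $c,\alpha$ depending only on $m,\beta,\mu$; and the uniform spectral-gap assumption $\sigma(H_n)\subset[-1,a]\cup[b,1]$ with $\gamma=b-a>0$ independent of $n$ allows one single choice of $\beta$ (via \eqref{approx}) to control $\|P_n-f_{FD}(H_n)\|_2$ for every $n$ at once. Without the uniform gap the argument would break, because $\beta$ would have to grow with $n$ and the Bernstein ellipse $\mathcal{E}_{\overline{\chi}}$ in Theorem \ref{thm1} would degenerate, driving $\alpha\to 0$; this is precisely the metallic case discussed separately in the paper.
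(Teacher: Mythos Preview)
Your proof is correct and follows essentially the same route as the paper's own argument: approximate the Heaviside function by the Fermi--Dirac function with $\beta$ chosen via Proposition~\ref{prop1}, use monotonicity to bound the pointwise error by $\delta$ on $[-1,a]\cup[b,1]$, pass to the $2$-norm via the spectral theorem, and then combine Theorem~\ref{thm1} with the trivial projector bound $|[P_n]_{ij}|\le 1$. The only minor imprecision is the phrase ``fix any $\mu\in(a,b)$'': Proposition~\ref{prop1} as stated (with the factor $2/\gamma$) presumes $\mu$ at the midpoint $(a+b)/2$, so either take $\mu$ there or note that an off-center $\mu$ merely requires a correspondingly larger $\beta$.
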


\vspace{0.1in}

Corollary \ref{cor1} allows us to determine \emph{a priori} 
a bandwidth $\bar m$ independent of $n$ outside of which the 
entries of $P_n$ are
smaller than a prescribed tolerance $\tau>0$.
Observe that it is not possible to incorporate $\delta$ 
in the exponential bound, but, at least in principle, 
one may always choose
$\delta$ smaller than a certain threshold. For instance, 
one may take $\delta<\tau/2$ and define $\bar{m}$ as 
the smallest integer
value of $m$ such that the relation 
$c\, {\rm e}^{-\alpha m}\leq\tau/2$ holds.

In the case of Hamiltonians with a general sparsity 
pattern one may apply Theorem \ref{thm2} to obtain a more 
general version of Corollary
\ref{cor1}. If the fixed bandwidth hypothesis is removed, 
the following bound holds:
\begin{equation}
|[P_n]_{ij}|\leq 
\min\left \{1, c\, {\rm e}^{-\theta d_n(i,j)}\right \}+\delta , 
\qquad {\rm for \,\, all}\quad  i\neq j,\label{bound_general}
\end{equation}
with $\theta=\ln \chi$.
Once again, for the result to be meaningful some restriction
on the sparsity patterns, like the uniformly bounded maximum
degree assumption already discussed, must be imposed.

\subsection{Proof of decay bounds}\label{proofs}
Theorem \ref{thm1} is a consequence of results proved in 
\cite{benzigolub} (Thm.~2.2) and \cite{nadersthesis} (Thm.~2.2); 
its proof
relies on a fundamental result in polynomial approximation theory known 
as Bernstein's Theorem \cite{meinardus}. Given a function $f$ continuous 
on $[-1,1]$ and
a positive integer $k$, the $k$th best
approximation error for $f$ is the quantity
$$
E_k(f)=\inf\left \{\max_{-1\leq x\leq 1}|f(x)-p(x)|:p\in P_k\right \},
$$
where $P_k$ is the set of all polynomials with real 
coefficients and degree less than or equal to $k$.
Bernstein's theorem describes the asymptotic behavior of the
best approximation error for a function $f$ analytic on 
a domain containing the interval $[-1,1]$.

Consider the family of ellipses in the complex 
plane with foci in $-1$
and $1$. As already mentioned, an ellipse in this family is completely determined 
by the sum $\chi>1$ of its half-axes and will be denoted 
as $\mathcal{E}_{\chi}$.

\vspace{0.05in}

\begin{theorem}\label{bern} [Bernstein]
Let the function $f$ be analytic in the interior of the 
ellipse $\mathcal{E}_{\chi}$ and continuous on 
$\mathcal{E}_{\chi}$. Moreover, assume that
$f(z)$ is real for real $z$. Then
$$
E_k(f)\leq\frac{2M(\chi)}{\chi^k(\chi-1)},
$$
where $M(\chi)=\max_{z\in\mathcal{E}_{\chi}}|f(z)|$.
\end{theorem}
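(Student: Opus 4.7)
The plan is to exhibit an explicit near-best approximating polynomial of degree $k$, namely the truncated Chebyshev expansion of $f$, and then estimate its error via the geometric decay of the Chebyshev coefficients of functions analytic in an ellipse. The principal tool is the Joukowski map $J(w) = \tfrac{1}{2}(w + w^{-1})$, which conformally identifies the exterior of the closed unit disk with $\mathbb{C}\setminus[-1,1]$ and sends the circle $|w|=\chi$ bijectively onto the ellipse $\mathcal{E}_\chi$. Under this map, the Chebyshev polynomials pull back to monomials via $T_k\bigl(\tfrac{1}{2}(w+w^{-1})\bigr) = \tfrac{1}{2}(w^k+w^{-k})$, so the Chebyshev expansion on $[-1,1]$ transforms into a Laurent expansion on an annulus.

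First I would set $F(w) := f\!\left(\tfrac{1}{2}(w+w^{-1})\right)$. Since $J$ is holomorphic on $\mathbb{C}^\ast$ and maps the open annulus $\chi^{-1}<|w|<\chi$ onto the interior of $\mathcal{E}_\chi$ (as a two-sheeted cover satisfying $J(w)=J(w^{-1})$), and since $f$ is analytic on that interior and continuous on $\mathcal{E}_\chi$, the function $F$ is holomorphic on the open annulus, continuous up to the two bounding circles, and invariant under $w\mapsto w^{-1}$. Its Laurent series $F(w)=\sum_{k\in\mathbb{Z}} c_k w^k$ therefore satisfies $c_k=c_{-k}$, and the choice $a_0=c_0$, $a_k=2c_k$ for $k\ge 1$ yields the Chebyshev expansion $f(x) = a_0 + \sum_{k\ge 1} a_k T_k(x)$, convergent uniformly on $[-1,1]$. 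The reality hypothesis on $f$ ensures that the $a_k$ are real, so that truncations lie genuinely in $P_k$.

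Next I would bound the coefficients by a standard Cauchy estimate on $|w|=r$ with $1 < r < \chi$:
$$|c_k| = \left|\frac{1}{2\pi i}\oint_{|w|=r}\frac{F(w)}{w^{k+1}}\,dw\right| \le \frac{\max_{|w|=r}|F(w)|}{r^k}.$$
Letting $r\uparrow\chi$ yields $|c_k|\le M(\chi)/\chi^k$, hence $|a_k|\le 2M(\chi)/\chi^k$ for all $k\ge 1$. Taking $p_k\in P_k$ to be the $k$-th Chebyshev partial sum and using $|T_j(x)|\le 1$ on $[-1,1]$, a geometric series gives
$$\max_{x\in[-1,1]}|f(x)-p_k(x)| \le \sum_{j=k+1}^\infty |a_j| \le \sum_{j=k+1}^\infty \frac{2M(\chi)}{\chi^j} = \frac{2M(\chi)}{\chi^k(\chi-1)},$$
and since $E_k(f)$ is the infimum over $P_k$, this is the claimed bound.

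The one subtle point, which I expect to be the main obstacle, is legitimizing the boundary passage $\max_{|w|=r}|F(w)|\to M(\chi)$ as $r\uparrow\chi$, given that $f$ is only assumed continuous (not analytic) on $\mathcal{E}_\chi$. This follows from uniform continuity of $f$ on the compact set $\overline{\mathcal{E}_\chi}$ composed with continuity of $J$ on the closed annulus $\{\chi^{-1}\le |w|\le\chi\}$; equivalently, one can apply the maximum-modulus principle to the holomorphic $F$ on slightly smaller annuli $\{\chi^{-1}+\varepsilon\le|w|\le \chi-\varepsilon\}$ and pass to the limit. Once this continuity up to the outer circle is in hand, everything else reduces to the geometric series above and no further approximation-theoretic machinery is required.
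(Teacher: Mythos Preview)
Your proposal is correct and follows the classical proof via the Joukowski map, Laurent expansion on the annulus, Cauchy estimates on the coefficients, and summation of the Chebyshev tail. The paper itself does not prove this theorem but quotes it from Meinardus; however, the proof it gives later for the variant Theorem~\ref{cor2} (which it explicitly says ``closely parallels the argument given in \cite{meinardus} for the proof of Theorem~\ref{bern}'') is precisely your argument, so your approach coincides with what the paper regards as the proof.
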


\vspace{0.05in}

Let us now consider the special case where 
$f(z):=f_{FD}(z)=1/(1+{\rm e}^{\beta(z-\mu)})$ is the 
Fermi--Dirac function of parameters
$\beta$ and $\mu$. Observe that $f_{FD}(z)$ has poles 
in $\mu\pm\iu\frac{\pi}{\beta}$, so the admissible values 
for $\chi$ with respect to $f_{FD}(z)$
are given by $1<\chi<\overline{\chi}$, where the parameter 
$\overline{\chi}$ is such that 
$\mu\pm\iu \frac{\pi}{\beta}\in\mathcal{E}_{\overline{\chi}}$
(the {\em regularity ellipse} for $f = f_{FD}$).
Also observe that smaller values of $\beta$ correspond to a
greater distance between the poles of $f_{FD}(z)$ and the real
axis, which in turn yields a larger value of $\bar{\chi}$. In other words, 
the smaller $\beta$, the faster the decay in Theorem \ref{thm1}. 
Explicit computation of $\overline{\chi}$ yields 
\eqref{horribleformula}.

Now, let $H_n$ be as in Theorem \ref{thm1}. We have
$$
\|f_{FD}(H_n)-p_k(H_n)\|_2=
\max_{x\in\sigma(H_n)}|f_{FD}(x)-p_k(x)|\leq E_k(f_{FD})\leq cq^{k+1},
$$
where $c=2\chi M(\chi)/(\chi -1)$ and $q=1/\chi$. 
The Bernstein approximation
of degree $k$ gives a bound on $|[f_{FD}(H_n)]_{ij}|$ 
when $[p_k(H_n)]_{ij}=0$, that is, when $|i-j|>mk$. 
We may also assume $|i-j|\leq m(k+1)$.
Therefore, we have
$$
|[f_{FD}(H_n)]_{ij}|\leq c\, {\rm e}^{m(k+1)\ln(q^{1/m})}=
c\, {\rm e}^{-\alpha m(k+1)}\leq c\, {\rm e}^{-\alpha |i-j|}.
$$

As for Theorem \ref{thm2}, note that for a 
general sparsity pattern we have $[(H_n)^k]_{ij}=0$, 
and therefore $[p_k(H_n)]_{ij}=0$,
whenever $d_n(i,j)>k$. Writing
$d_n(i,j)=k+1$ we obtain
$$
|[f_{FD}(H_n)]_{ij}|\leq c\, (1/\chi)^{k+1}=c\, {\rm e}^{-\theta d_n(ij)}.
$$

Let us now prove Corollary \ref{cor1}. Assume that 
$\beta$ satisfies the inequality \eqref{approx} for 
given values of $\delta$ and $\gamma$.
If we approximate the Heaviside function with 
step at $\mu$ by means of the Fermi--Dirac function 
$f_{FD}(x)=1/(1+{\rm e}^{\beta(x-\mu)})$,
the pointwise approximation error is given by 
$g(x)={\rm e}^{\beta(x-\mu)}/(1+{\rm e}^{\beta (x-\mu)})$ 
for $x<\mu$ and by $f_{FD}(x)$ for $x>\mu$. It
is easily seen that $g(x)$ is a monotonically 
increasing function, whereas $f_{FD}$ is monotonically 
decreasing. As a consequence,
for each Hamiltonian $H_n$ we have that 
$1-f_{FD}(\lambda)\leq\delta$ for all eigenvalues 
$\lambda\in I_1$ and $f_{FD}(\lambda)\leq\delta$
for all $\lambda\in I_2$. In other words, the 
pointwise approximation error on the spectrum 
of $H_n$ is always bounded by $\delta$. Therefore, we have
$$
|[P_n-f_{FD}(H_n)]_{ij}|\leq \|P_n-f_{FD}(H_n)\|_2\leq\delta .
$$
We may then conclude using Theorem \ref{thm1}:
$$
 |[P_n]_{ij}|\leq |[f_{FD}(H_n)]_{ij}|+
\delta\leq c\, {\rm e}^{-\alpha|i-j|}+\delta .
$$
Finally, recall that in an orthogonal projector
no entry can exceed unity in absolute value. With this in mind,
(\ref{bound_band}) and (\ref{bound_general}) readily follow.

\subsection{Additional bounds}

Theorems \ref{thm1} and \ref{thm2} rely on Bernstein's 
result on best polynomial approximation. Following the same 
argument, one may derive decay bounds for the density
matrix from any other estimate on the best 
polynomial approximation error for classes of functions that include the
Fermi--Dirac function. 
For instance, consider the following result of 
Achieser (see \cite[Thm.~78]{meinardus}, and \cite{achieser}):
\vspace{0.05in}

\begin{theorem}\label{thm_achieser}
Let the function $f$ be analytic in the interior of the 
ellipse $\mathcal{E}_{\chi}$. Suppose that 
$|{\rm Re}\, f(z)|<1$ holds in $\mathcal{E}_{\chi}$
and that $f(z)$ is real for real $z$. Then the following bound holds:
\begin{equation}
E_k(f)\leq \frac{4}{\pi}
\sum_{\nu=0}^\infty \frac{(-1)^{\nu}}
{(2\nu+1)\cosh ((2\nu+1)(k+1)\ln\chi)}.\label{infinite_sum}
\end{equation}
\end{theorem}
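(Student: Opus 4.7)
The plan is to reduce the theorem, just as in the classical Bernstein estimate used in the proof of Theorem \ref{thm1}, to a bound on the Chebyshev coefficients of $f$, but one that exploits the hypothesis on $\mathrm{Re}\, f$ rather than on $|f|$ itself. The first step is to apply the Joukowski map $z = \tfrac{1}{2}(w + w^{-1})$, which is a biholomorphism of the annulus $A_\chi := \{w : 1 < |w| < \chi\}$ onto the interior of $\mathcal{E}_\chi$ cut along $[-1,1]$ and which identifies $T_n(z) = \tfrac{1}{2}(w^n + w^{-n})$. Under this map, $f$ lifts to $F(w) := f(\tfrac{1}{2}(w+w^{-1}))$, analytic on $\overline{A_\chi}$, palindromic ($F(w) = F(1/w)$), real on the unit circle $|w|=1$, and still satisfying $|\mathrm{Re}\, F(w)| < 1$ throughout the annulus.

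Expanding $F$ in a Laurent series on $A_\chi$ as $F(w) = \sum_{n\in\mathbb{Z}} a_n w^n$, the palindromic symmetry forces $a_n = a_{-n}$, so one recovers the Chebyshev series $f(z) = a_0 + 2\sum_{n\ge 1} a_n T_n(z)$. Truncating after degree $k$ and using the triangle inequality gives $E_k(f) \le 2\sum_{n > k}|a_n|$, so the theorem reduces to a sharp upper bound on $|a_n|$ that uses $|\mathrm{Re}\, F|<1$ rather than $\sup|F|$. The natural setting for such an estimate is the further conformal image $w = e^{i\zeta}$: $A_\chi$ becomes the horizontal strip $\{0 < \mathrm{Im}\,\zeta < \ln\chi\}$ (with $2\pi$-periodicity in $\mathrm{Re}\,\zeta$), and $F$ becomes a real-valued, $2\pi$-periodic function on the real axis whose analytic extension into the strip has real part bounded by $1$.

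The main obstacle, and the technical heart of the argument, is the explicit determination of the sharp majorant for the $n$-th Fourier coefficient of such a function. The classical solution, due to Achieser, is that the extremal function is a Jacobi elliptic mapping of the strip $\{0 < \mathrm{Im}\,\zeta < \ln\chi\}$ onto the vertical strip $|\mathrm{Re}\, w| \le 1$, whose Fourier series is a theta-function sum whose $n$-th coefficient is proportional to $\sum_{\nu \ge 0}(-1)^\nu /[(2\nu+1)\cosh((2\nu+1)n\ln\chi)]$. Inserting this sharp coefficient bound into $E_k(f) \le 2\sum_{n > k}|a_n|$ and performing the resulting double sum, whose dominant contribution comes from $n = k+1$, yields exactly \eqref{infinite_sum}. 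I expect the identification of the extremal elliptic function, and the bookkeeping that reduces the double sum to the single sum over $\nu$ with $(k+1)\ln\chi$ in the argument of $\cosh$, to be the most delicate steps; for the elliptic-function computations I would follow the treatment in \cite[Thm.~78]{meinardus} and Achieser's monograph \cite{achieser}.
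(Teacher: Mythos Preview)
The paper does not actually prove this theorem; it is stated with a citation to \cite[Thm.~78]{meinardus} and \cite{achieser} and then used as a black box. So there is no ``paper's own proof'' to compare against, and your task is really to reproduce the classical argument.

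Your outline starts well: the Joukowski lift to the annulus, the identification of Laurent and Chebyshev coefficients, and the passage $w = e^{i\zeta}$ to a periodic function on a strip with bounded real part are exactly the right set-up. The gap is in the step that follows. You propose to bound each coefficient $|a_n|$ sharply by an expression of the form $\sum_{\nu\ge 0}(-1)^\nu/[(2\nu+1)\cosh((2\nu+1)n\ln\chi)]$ and then insert this into $E_k(f)\le 2\sum_{n>k}|a_n|$. But summing the resulting double series over $n>k$ does \emph{not} collapse to the single sum in \eqref{infinite_sum} with $(k+1)\ln\chi$ in the argument; that single sum corresponds only to the $n=k+1$ term, and the remaining terms $n=k+2,k+3,\ldots$ make a strictly positive additional contribution. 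Your remark that ``the dominant contribution comes from $n=k+1$'' is an asymptotic statement, not an identity, so this route would at best give the bound up to a multiplicative constant, not the exact formula claimed.

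The classical proof does not pass through the tail sum $2\sum_{n>k}|a_n|$ at all. Instead one identifies the right-hand side of \eqref{infinite_sum} directly as $E_k(\Phi)$ for the \emph{extremal} function $\Phi$, namely the conformal map of the interior of $\mathcal{E}_\chi$ onto the strip $|\mathrm{Re}\,w|<1$ (an elliptic function, whose Chebyshev expansion is the theta-type series you mention), and then shows by a subordination argument that every $f$ in the class satisfies $E_k(f)\le E_k(\Phi)$. The elliptic computation you flag as delicate is indeed where the work lies, but it enters as the evaluation of $E_k(\Phi)$ for this single function, not as a coefficient bound to be summed. If you want to carry out the proof, follow that route in \cite{achieser} or \cite{meinardus} rather than the tail-sum approach.
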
  

\vspace{0.05in}

The series in \eqref{infinite_sum} converges quite fast; 
therefore, it suffices to compute a few terms explicitly 
to obtain a good approximation
of the bound. A rough estimate shows that, in 
order to approximate the right hand side of 
\eqref{infinite_sum} within a tolerance $\tau$, one may
truncate the series after $\nu_0$ terms, where 
$r^{\nu_0}<\tau(1-r)$ and $r=\chi^{-{\frac{k+1}{2}}}$.

Observe that, as in Bernstein's results, there 
is again a degree of arbitrariness in the choice of 
$\chi$. However, the
admissible range for $\chi$ is smaller here because 
of the hypothesis $|{\rm Re}\, f(z) |<1$.

The resulting matrix decay bounds have the form
\begin{equation}\label{bound_achies}
|[f_{FD}(H_n)]_{ij}|\leq \frac{4}{\pi}
\sum_{\nu=0}^\infty \frac{(-1)^{\nu}}
{(2\nu+1)\cosh ((2\nu+1)(d(i,j)+1)\ln\chi)} 
\end{equation}
for the case of general sparsity patterns. While these bounds
are less transparent than those derived from Bernstein's
Theorem, they are computable. We found that the bounds (\ref{bound_achies})
improve on \eqref{bound} for entries close to the 
main diagonal, but do not seem to have a better asymptotic 
behavior. A possibility would be to combine the two bounds
by taking the smaller between the two values.



So far we have only considered bounds based on best approximation
of analytic functions defined on a single interval. In \cite{hasson},
Hasson has obtained an interesting result on 
polynomial approximation of a step function defined on the union of two 
symmetric intervals. Let $a,b\in\mathbb{R}$ 
with $0<a<b$ and let ${\rm sgn}(x)$ be the sign function 
defined on $[-b,-a]\cup [a,b]$, i.e., 
${\rm sgn}(x)=-1$ on $[-b,-a]$ and ${\rm sgn}(x)=1$ on $[a,b]$. Notice that
the sign function is closely related to the 
Heaviside function $h(x)$, since we have 
$h(x)=\frac{1}{2}(1+{\rm sgn}(x))$. 

\vspace{0.2in}

\begin{prop}
There exists a positive constant $K$ such that
\begin{equation}
E_k({\rm sgn};[-b,-a]\cup [a,b])\leq 
K\frac{\Big(\sqrt{\frac{b-a}{b+a}}\,\Big)^k}{\sqrt{k}}.\label{hasson_bound}
\end{equation}
\end{prop}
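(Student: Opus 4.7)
The plan is to exploit the odd symmetry of ${\rm sgn}$ on the symmetric set $[-b,-a]\cup[a,b]$ in order to reduce this two-interval approximation problem to a one-interval problem for the analytic function $y^{-1/2}$. Because ${\rm sgn}$ is odd, a standard symmetrization argument shows that its best polynomial approximant of degree $k$ may be taken odd, i.e.\ $p_k(x)=x\,q(x^2)$ with $q$ a polynomial of degree $\lfloor(k-1)/2\rfloor$. On $[a,b]$ one has $|p_k(x)-1|=x\,|q(x^2)-1/x|$, so writing $y=x^2\in[a^2,b^2]$ gives
$$E_k({\rm sgn};[-b,-a]\cup[a,b])\;\le\;b\cdot E_{\lfloor(k-1)/2\rfloor}\!\left(y^{-1/2};[a^2,b^2]\right).$$

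Next I would apply a quantitative polynomial approximation estimate to $g(y)=y^{-1/2}$ on $[a^2,b^2]$. Under the affine map sending $[a^2,b^2]$ onto $[-1,1]$, the branch point at $y=0$ is carried to the real point $t_0=-(a^2+b^2)/(b^2-a^2)$ with $|t_0|>1$, so the regularity ellipse is the Bernstein ellipse $\mathcal{E}_{\chi_0}$ passing through $t_0$. The identity $(a^2+b^2)^2-(b^2-a^2)^2=4a^2b^2$ gives
$$\chi_0\;=\;|t_0|+\sqrt{t_0^2-1}\;=\;\frac{a+b}{b-a},\qquad \chi_0^{-1}\;=\;\frac{b-a}{a+b}.$$
Combined with the reduction above, a direct appeal to Theorem \ref{bern} already produces $E_k({\rm sgn})\le C\,\chi_0^{-k/2}=C\bigl(\sqrt{(b-a)/(b+a)}\bigr)^k$, which is the geometric factor claimed by Hasson but without the polynomial refinement.

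The genuinely delicate point is the additional $1/\sqrt{k}$ gain, which a crude maximum-modulus argument does not supply. To capture it I would bypass Bernstein and expand $g$, pulled back to $[-1,1]$, in its Chebyshev series $\sum_n a_n T_n(t)$. The coefficients admit the integral representation $a_n=\tfrac{2}{\pi}\int_{-1}^{1}g(t)T_n(t)(1-t^2)^{-1/2}\,dt$, which can be evaluated asymptotically by deforming the contour around the branch cut running from $t_0$ to $-\infty$ in the $t$-plane. Because the local behavior of $g$ at the branch point is of inverse-square-root type, a Darboux/Watson-type analysis yields $|a_n|\le C'\,\chi_0^{-n}/\sqrt{n}$; summing the tail $\sum_{n>j}|a_n|$ preserves the $1/\sqrt{j}$ prefactor, since the geometric series $\sum_{n>j}\chi_0^{-n}$ contributes only an $O(\chi_0^{-(j+1)})$ factor. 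Substituting $j=\lfloor(k-1)/2\rfloor$ and multiplying by the outer factor $b$ then delivers Hasson's bound with an absolute constant $K$.

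The main technical obstacle is therefore this refined Chebyshev-coefficient estimate: one must justify the contour deformation through a neighborhood of $t_0$ and extract the precise $1/\sqrt{n}$ correction from the local behavior $g(y)\sim y^{-1/2}$. By comparison, the algebraic reduction from the two-interval sgn problem to the one-interval $y^{-1/2}$ problem, and the identification of $\chi_0=(a+b)/(b-a)$ as the correct Bernstein parameter, are routine.
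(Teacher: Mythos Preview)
The paper does not prove this proposition; it is stated as a result of Hasson (reference \cite{hasson}) and then applied to derive the decay bound \eqref{hasson_decay}. There is therefore no proof in the paper to compare against.

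Your outline is a reasonable route to the result. The symmetrization step is correct: since ${\rm sgn}$ is odd on the symmetric set $[-b,-a]\cup[a,b]$, a best approximant of degree $k$ may be taken odd, $p_k(x)=x\,q(x^2)$, and the substitution $y=x^2$ does reduce the problem to approximating $g(y)=y^{-1/2}$ on $[a^2,b^2]$. Your computation of the Bernstein parameter $\chi_0=(a+b)/(b-a)$ is also correct and matches the value $\overline\xi$ that the paper obtains later in Theorem~\ref{cor2} for the ellipse $\mathcal E_{a^2,b^2}$ with a vertex at the origin.

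The only genuine gap is the one you yourself flag: the $1/\sqrt{k}$ prefactor. A direct application of Theorem~\ref{bern} gives only the geometric rate $\chi_0^{-k/2}$, and upgrading this to $\chi_0^{-k/2}/\sqrt{k}$ requires the refined Chebyshev-coefficient estimate $|a_n|\le C'\chi_0^{-n}/\sqrt{n}$ coming from the square-root singularity of $g$ at $y=0$. The Darboux/contour-deformation argument you describe is the right mechanism, but you have named it rather than carried it out; the deformation past the branch point and the extraction of the $n^{-1/2}$ correction require genuine work (this is precisely the content of Hasson's paper). So your proposal correctly isolates the reduction and the key parameter, but the sharp prefactor---which is the whole point of the proposition over a straight Bernstein bound---remains a black box in your write-up.
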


Given a sequence of Hamiltonians $\{H_n\}$ with gapped spectra, 
one may choose $a$ and $b$ and shift $H_n$, 
if necessary, so that the spectrum of each $H_n$
is contained in $[-b,-a]\cup [a,b]$ and the 
eigenvalues corresponding to occupied states 
belong to $[-b,-a]$. Then we obtain the following
decay bound for the density matrix:
\begin{equation}\label{hasson_decay}
\left |[P_n]_{ij}\right |\leq 
K\frac{{\rm e}^{-\xi d(i,j)}}{2\sqrt{d(i,j)}},\qquad {\rm where} \quad
\xi=\frac{1}{2}\ln\frac{b+a}{b-a}\,.
\end{equation}  
Under the bounded maximal degree condition, the rate
of decay is independent of $n$.

A few remarks on \eqref{hasson_decay} are in order:
\begin{itemize}
\item Since \eqref{hasson_decay} relies directly 
on a polynomial approximation of the step function, we do not need here
the extra term $\delta$ found in \eqref{bound_general}.
\item Unfortunately, it is not possible to assess 
whether \eqref{hasson_decay} may be useful in 
practice without an explicit formula -- or
at least an estimate -- for the constant $K$. 
The asymptotic decay rate, however, is faster than
exponential and indeed faster
than for other bounds; a comparison
is shown in Fig.~\ref{fig_hasson} (top). 
Notice that this logarithmic plot is only meant to 
show the slope of the bound (which is computed
for $K=1$). 
\item A disadvantage of \eqref{hasson_decay} is 
the requirement that the intervals containing 
the spectra $\sigma(H_n)$ should be 
symmetric with respect to $0$. Of course one 
may always choose $a$ and $b$ so that this 
hypothesis is satisfied, but the quality of the
decay bound deteriorates if $b$ (or $-b$) is 
not close to the maximum (resp., minimum) eigenvalue; 
see Fig.~\ref{fig_hasson} (bottom). 
The blue curve shows the slope of the decay 
bound for $a=0.25$ and $b=1$, in a logarithmic scale. 
In green we display the behavior
of the first row of the density matrix associated 
with a tridiagonal $100\times 100$ matrix 
with spectrum in $[-1,-0.25]\cup [0.25,1]$. 
The red plot refers to the first row of the 
density matrix associated with a matrix 
with spectrum in $[-0.4375,-0.25]\cup [0.25,1]$. The
first matrix is clearly better approximated 
by the decay bound than the second one.
\end{itemize}


\begin{figure}[t!]
\begin{center}
\includegraphics[width=0.85\textwidth]{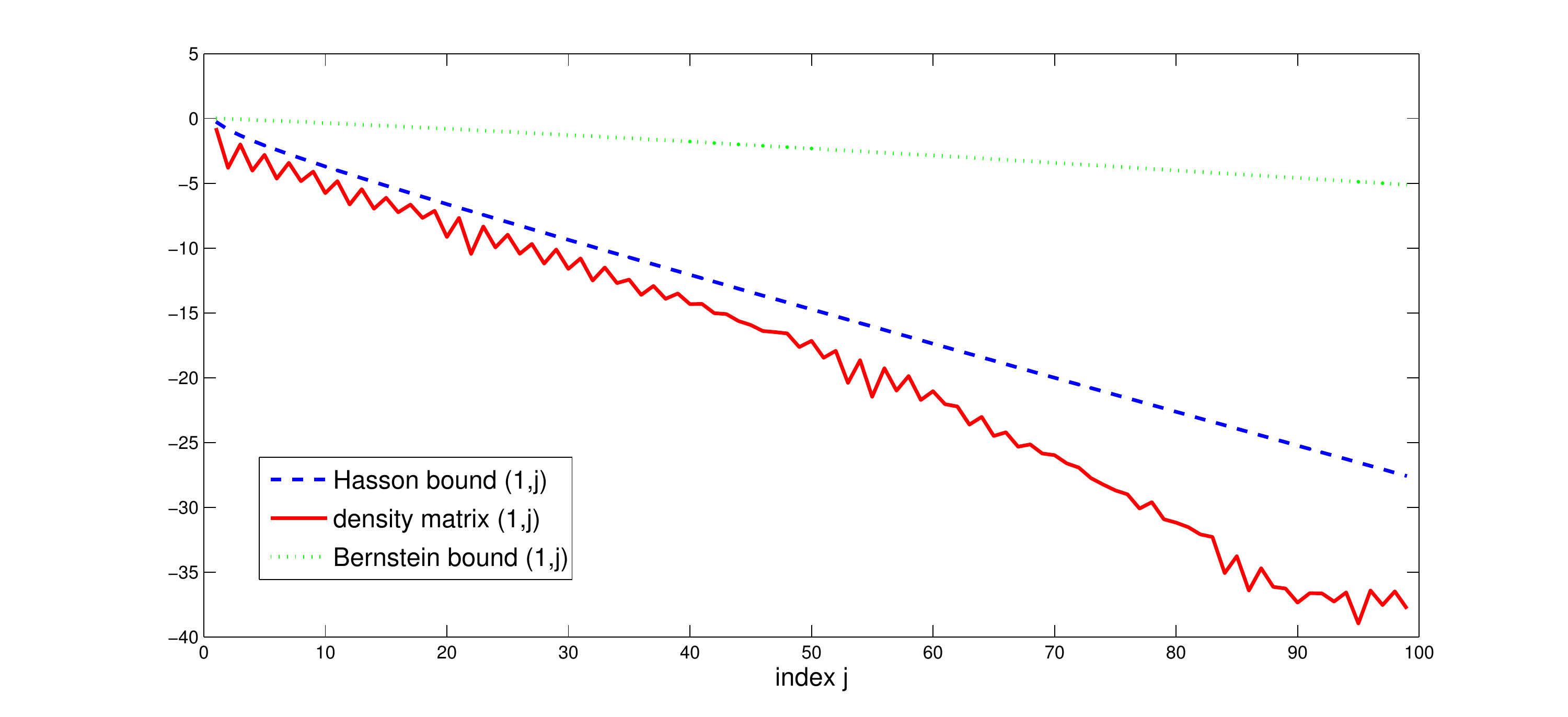}
\end{center}
\begin{center}
\includegraphics[width=0.85\textwidth]{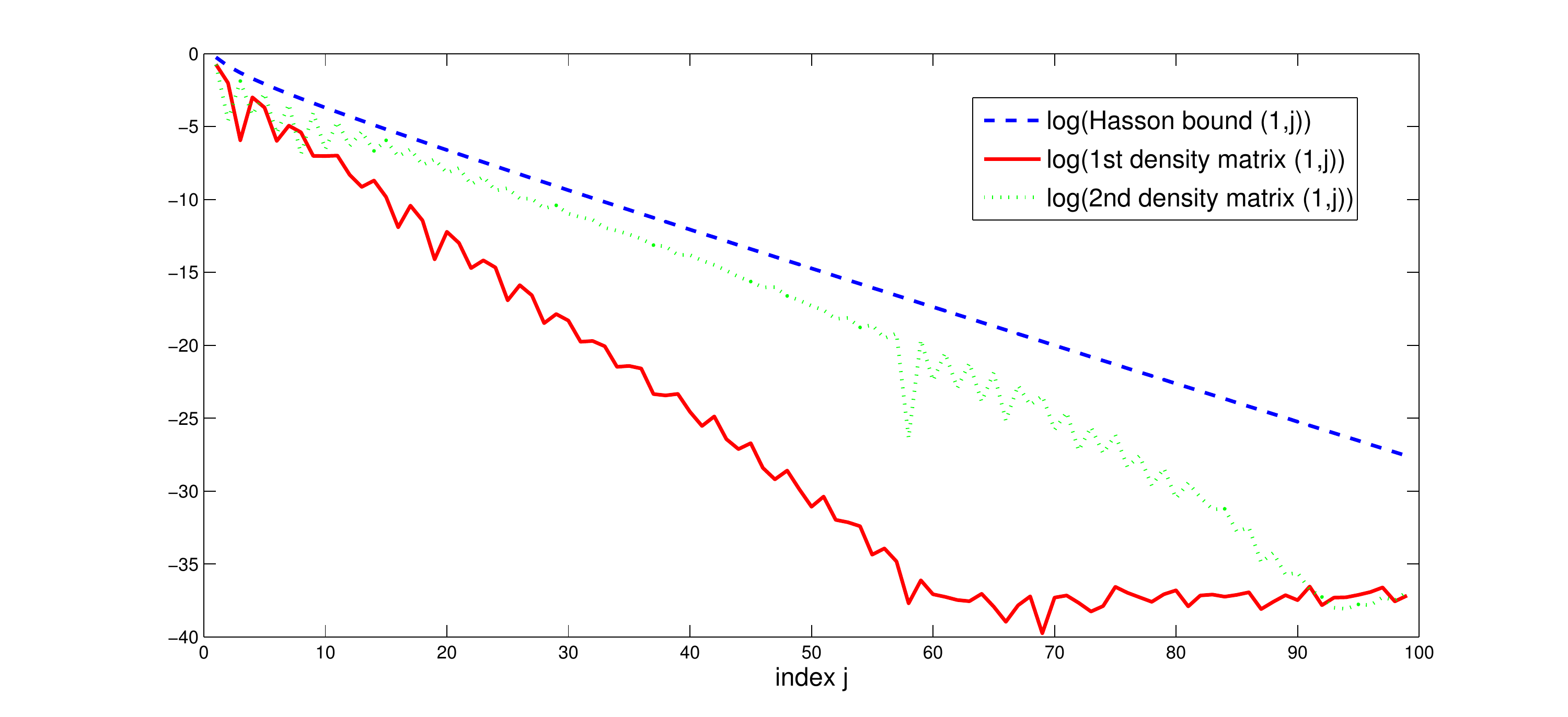}
\end{center}
\caption{Top: logarithmic plot of Hasson (blue) 
and Bernstein-type (green) decay bounds, for 
a $100\times 100$ tridiagonal matrix
with spectrum in $[-1,-0.25]\cup [0.25,1]$. 
The first row of the \lq\lq exact\rq\rq\  
density matrix is plotted in red. Bottom:
logarithmic plot of
Hasson decay bounds (blue) and first rows of 
density matrices associated with matrices 
with different eigenvalue distributions
(red and green).}\label{fig_hasson}
\end{figure}

As one can see from the two plots in Fig.~\ref{fig_hasson},
even for $c=K=1$
both types of decay bounds are rather conservative, and estimating
the truncation bandwidth $\bar m$ needed to achieve a prescribed
error from these bounds would lead to an overly large
band. Hence, the bounds may not be very useful in practice. 
For further discussion of these issues, see section \ref{pract}.

\subsection{Further results}
Let us assume again that we have a sequence $\{H_n\}$ of Hermitian $n\times n$
Hamiltonians (with $n=n_b\cdot n_e$, $n_b$ fixed, $n_e \to \infty$) such that
\begin{itemize} 
\item The matrices $H_n$ are banded with uniformly bounded bandwidth, or
sparse with graphs having uniformly bounded maximum degree;
\item the spectra $\sigma(H_n)$ are uniformly bounded;
\item
the sequence $\{H_n\}$ has a \lq\lq stable\rq\rq\ spectral gap, i.e., there
exist real numbers $g_1<g_2$
such that $[g_1,g_2]\cap\sigma(H_n)=\emptyset$ for sufficiently large $n$.
\end{itemize}
In this subsection we let
\begin{itemize}
\item
$\mu:=(g_2+g_1)/2$ (Fermi level),
\item
$\gamma:=g_2-\mu=\mu-g_1$ (absolute spectral
gap). 
\end{itemize}
Note that because of the uniformly bounded spectra assumption,
the absolute spectral gap is within a constant of the relative gap
previously defined.

Chui and Hasson study in \cite{chuihasson} the asymptotic
behavior of the error of best polynomial
approximation for a sufficiently smooth function $f$
defined on the set $I=[-b,-a]\cup [a,b]$,
with $0<a<b$. Denote as $\mathcal{C}(I)$ the space of
real-valued continuous functions on $I$,
with the uniform convergence norm. Then we have (see
\cite[Thm.~1]{chuihasson} and \cite{lorentz}):

\begin{theorem}\label{chtheorem}
Let $f\in\mathcal{C}(I)$ be such that $f|_{[-b,-a]}$ is
the restriction of a function $f_1$
analytic on the left half plane ${\rm Re}\, z<0$ and $f|_{[a,b]}$
is the restriction of a function $f_2$
analytic on the right half plane ${\rm Re}\, z>0$. Then
$$
\limsup_{k\to \infty}\left [ E_k(f,I)\right ]^{1/k}\leq\sqrt{\frac{b-a}{b+a}},
$$
where $E_k(f,I)$ is the error of best polynomial
approximation for $f$ on $I$.
\end{theorem}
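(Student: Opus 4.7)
The plan is to reduce the two-interval approximation problem on $I=[-b,-a]\cup[a,b]$ to two single-interval problems on $[a^2,b^2]$, to which the classical Bernstein theorem (Theorem \ref{bern}) applies. Splitting $f$ into its even and odd parts in $x$, I introduce the functions
$$
g(w)=\tfrac{1}{2}\bigl[f_2(\sqrt{w})+f_1(-\sqrt{w})\bigr],\qquad h(w)=\frac{f_2(\sqrt{w})-f_1(-\sqrt{w})}{2\sqrt{w}},
$$
where $\sqrt{\,\cdot\,}$ denotes the principal branch. Then $f(x)=g(x^2)+x\,h(x^2)$ on $I$, and if $P_k$ and $Q_k$ are polynomials of degree $k$ approximating $g$ and $h$ on $[a^2,b^2]$, the combination $R_{2k+1}(x):=P_k(x^2)+xQ_k(x^2)$ has degree at most $2k+1$ and satisfies
$$
\max_{x\in I}|f(x)-R_{2k+1}(x)|\le\max_{w\in[a^2,b^2]}|g(w)-P_k(w)|+b\max_{w\in[a^2,b^2]}|h(w)-Q_k(w)|.
$$

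Next I would verify that $g$ and $h$ are analytic on the slit plane $\mathbb{C}\setminus(-\infty,0]$: the principal square root sends this domain biholomorphically onto the open right half-plane, so $f_2(\sqrt{w})$ is analytic there by hypothesis on $f_2$, while $f_1(-\sqrt{w})$ is analytic because $-\sqrt{w}$ lies in the open left half-plane, where $f_1$ is analytic. The denominator $2\sqrt{w}$ in $h$ causes no trouble since $[a^2,b^2]\subset(0,\infty)$ does not meet the slit.

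The heart of the argument is to identify the largest Bernstein ellipse with foci at $a^2$ and $b^2$ that fits inside $\mathbb{C}\setminus(-\infty,0]$. Under the affine change of variable $t=(2w-a^2-b^2)/(b^2-a^2)$ that sends $[a^2,b^2]$ to $[-1,1]$, the ellipse with parameter $\chi>1$ extends on the real axis to $\pm\tfrac{1}{2}(\chi+\chi^{-1})$ in $t$, and it stays strictly to the right of $w=0$ exactly when $\tfrac{1}{2}(\chi+\chi^{-1})<(a^2+b^2)/(b^2-a^2)$. The quadratic $\chi^2-\tfrac{2(a^2+b^2)}{b^2-a^2}\chi+1=0$ has roots $\bigl(2(a^2+b^2)\pm 4ab\bigr)/(2(b^2-a^2))$, and the identity $2(a^2+b^2)+4ab=2(a+b)^2$ collapses the critical upper root to $\overline{\chi}=(a+b)/(b-a)$. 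For every $1<\chi<\overline{\chi}$, Theorem \ref{bern} applied to $g$ and $h$ on $[a^2,b^2]$ then yields a constant $C(\chi)$ independent of $k$ with
$$
E_k(g,[a^2,b^2])+b\,E_k(h,[a^2,b^2])\le C(\chi)\,\chi^{-k}.
$$

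Combining with the estimate from the first step gives $E_{2k+1}(f,I)\le C(\chi)\chi^{-k}$, and monotonicity of $E_k(f,I)$ in $k$ also gives $E_{2k}(f,I)\le C(\chi)\chi^{-(k-1)}$. Taking $k$-th roots and sending $k\to\infty$ produces $\limsup_{k\to\infty}[E_k(f,I)]^{1/k}\le\chi^{-1/2}$, and letting $\chi\uparrow\overline{\chi}$ gives the claimed bound $\sqrt{(b-a)/(b+a)}$. I expect the main obstacle to be the geometric identification of $\overline{\chi}$: one must recognize that the extremal Bernstein ellipse is the one tangent to the branch cut $(-\infty,0]$ at the origin, and then carry out the algebraic simplification that collapses the quadratic formula to the clean expression $(a+b)/(b-a)$; the square-root exponent in the final bound arises because a degree-$k$ approximation on $[a^2,b^2]$ produces a degree-$(2k+1)$ approximation on $I$.
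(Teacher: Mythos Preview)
Your proof is correct and follows essentially the same route as the paper's own reformulation of this Chui--Hasson result (Theorems~\ref{cor2} and~\ref{thm3}): reduce to polynomial approximation on the single interval $[a^2,b^2]$ via the substitution $w=x^2$, apply a Bernstein-type bound there, and identify the critical ellipse parameter as $\overline{\chi}=(a+b)/(b-a)$. The only organizational difference is that you package $f_1$ and $f_2$ together through the even/odd decomposition $f(x)=g(x^2)+x\,h(x^2)$, whereas the paper treats them separately---building one approximant $R_{2k+1}$ that is close to $f_2$ on $[a,b]$ and small on $[-b,-a]$, another $S_{2k+1}$ doing the reverse, and then summing---but the two constructions are linear recombinations of each other and the underlying analysis is identical.
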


The authors of \cite{chuihasson} observe that the above result cannot be
obtained by extending $f(x)$ to a continuous
function on $[-b,b]$ and applying known bounds for polynomial
approximation over a single interval.
Theorem \ref{chtheorem} looks potentially useful for our
purposes, except that it provides an asymptotic result,
rather than an explicit bound for each value of $k$.
Therefore, we need to reformulate the argument
in \cite{chuihasson}. To this end, we prove a variant
of Bernstein's Theorem (cf.~Theorem \ref{bern}) adapted
to our goals. Instead of working on the interval
$[-1,1]$ we want to bound the approximation error
on the interval $[a^2,b^2]$.

\begin{theorem}\label{cor2}
Let $f\in\mathcal{C}([a^2,b^2])$ be the restriction of
a function $f$ analytic in the interior of the ellipse
$\mathcal{E}_{a^2,b^2}$ with foci in $a^2, b^2$ and a
vertex at the origin. Then, for all $\xi$ with
$$
1<\xi<\overline{\xi}:=\frac{a+b}{a-b}, 
$$
there exists a constant $K$ such that
$$
E_k(f,[a^2,b^2])\leq K\left (\frac{1}{\xi} \right)^k.
$$
\end{theorem}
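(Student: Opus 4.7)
The plan is to reduce Theorem \ref{cor2} to Bernstein's Theorem (Theorem \ref{bern}) by an affine change of variables that sends $[a^2,b^2]$ to $[-1,1]$ and maps the ellipse $\mathcal{E}_{a^2,b^2}$ with foci $a^2,b^2$ onto a standard Bernstein ellipse $\mathcal{E}_{\bar\chi}$ with foci $\pm 1$. Since polynomial approximation is affinely invariant (the composition of a polynomial with an affine substitution is again a polynomial of the same degree), best-approximation errors transfer verbatim between the two intervals.

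First, introduce the change of variables
$$t=\varphi(x):=\frac{2x-(a^2+b^2)}{b^2-a^2},\qquad x=\psi(t)=\tfrac{1}{2}(b^2-a^2)\,t+\tfrac{1}{2}(a^2+b^2),$$
so that $\varphi([a^2,b^2])=[-1,1]$. Define $\tilde f(t):=f(\psi(t))$. Because $\psi$ is affine, $\tilde f$ is analytic in the image under $\varphi$ of the interior of $\mathcal{E}_{a^2,b^2}$ and continuous on its closure. The next step is to identify this image: it is an ellipse with foci $\pm 1$, and its left vertex is the image of the left vertex of $\mathcal{E}_{a^2,b^2}$, which by hypothesis lies at $x=0$. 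Hence the left vertex lies at $t=\varphi(0)=-(a^2+b^2)/(b^2-a^2)$, so the semi-major axis is $\kappa_1=(a^2+b^2)/(b^2-a^2)$. Using the identity $\kappa_1^2-\kappa_2^2=1$ valid for any ellipse with foci $\pm 1$, one computes $\kappa_2=2ab/(b^2-a^2)$, whence
$$\bar\chi=\kappa_1+\kappa_2=\frac{(a+b)^2}{b^2-a^2}=\frac{a+b}{b-a}=\bar\xi.$$
Thus $\varphi$ sends the interior of $\mathcal{E}_{a^2,b^2}$ exactly onto the interior of $\mathcal{E}_{\bar\xi}$.

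Now fix any $\xi$ with $1<\xi<\bar\xi$. The corresponding ellipse $\mathcal{E}_\xi$ lies strictly inside $\mathcal{E}_{\bar\xi}$, so $\tilde f$ is analytic in the interior of $\mathcal{E}_\xi$ and continuous on $\mathcal{E}_\xi$, and it is real on $[-1,1]$ because $f$ is real on $[a^2,b^2]$. Bernstein's Theorem applied to $\tilde f$ yields
$$E_k(\tilde f,[-1,1])\le \frac{2\,\widetilde M(\xi)}{\xi^k(\xi-1)},\qquad \widetilde M(\xi):=\max_{t\in\mathcal{E}_\xi}|\tilde f(t)|,$$
and $\widetilde M(\xi)$ is finite since $\mathcal{E}_\xi$ is a compact subset of the domain of analyticity of $\tilde f$.

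Finally, if $p_k$ is a polynomial of degree $\le k$ realizing (or approaching) this error on $[-1,1]$, then $q_k(x):=p_k(\varphi(x))$ is a polynomial of degree $\le k$ in $x$, and $\max_{x\in[a^2,b^2]}|f(x)-q_k(x)|=\max_{t\in[-1,1]}|\tilde f(t)-p_k(t)|$. Therefore
$$E_k(f,[a^2,b^2])=E_k(\tilde f,[-1,1])\le \frac{2\,\widetilde M(\xi)}{\xi-1}\cdot\xi^{-k},$$
which is the claimed inequality with $K=2\widetilde M(\xi)/(\xi-1)$. There is no real obstacle here; the only delicate point is the geometric verification that the condition ``vertex at the origin'' in the $x$-plane corresponds to $\bar\chi=(a+b)/(b-a)$ in the $t$-plane, which is a short calculation using $\kappa_1^2-\kappa_2^2=1$.
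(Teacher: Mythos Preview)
Your proof is correct and follows essentially the same route as the paper: both use the affine map $\psi$ carrying $[-1,1]$ to $[a^2,b^2]$ and identify the image ellipse as $\mathcal{E}_{\bar\xi}$ with $\bar\xi=(a+b)/(b-a)$, then appeal to Bernstein's bound. The only difference is cosmetic: you invoke Theorem~\ref{bern} directly after the change of variable, whereas the paper re-runs the Laurent/Chebyshev argument underlying Bernstein's theorem from scratch; both arrive at the same constant $K=2M(\xi)/(\xi-1)$.
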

\begin{proof}
The proof closely parallels the argument given in \cite{meinardus}
for the proof of Theorem \ref{bern}.
First of all, observe that 
the ellipse $\mathcal{E}_{\chi}$ in Bernstein's Theorem has foci in $\pm 1$ and
vertices in $\pm (\chi +1/\chi)/2$ and
$\pm (\chi -1/\chi)/2$. The parameter $\chi$ is the sum
of the lengths of the semiaxes. Similarly,
the ellipse $\mathcal{E}_{a^2,b^2}$ has foci in $a^2, b^2$
and vertices in $0$, $a^2+b^2$
 and $(a^2+b^2)/2\pm \iu ab$.
 Also observe that $\overline{\xi}$ is the sum of the lengths
of the semiaxes of $\mathcal{E}_{a^2,b^2}$,
normalized w.r.t. the semifocal length, so that it plays
exactly the same role as $\chi$ for
$\mathcal{E}_{\chi}$.
Now we look for a
conformal map that sends an annulus in the complex plane
to the ellipse
where $f$ is analytic. When this ellipse is $\mathcal{E}_{\chi}$,
a suitable map is
$u=c(v)=(v+1/v)/2$, which sends the annulus
$\chi ^{-1}<|v|<\chi$ to $\mathcal{E}_{\chi}$.
When the desired ellipse has foci in $a^2,b^2$,
we compose $c(v)$ with the
change of variable
$$
x=\psi (u)=\left ( u+\frac{a^2+b^2}{b^2-a^2}\right )\frac{b^2-a^2}{2},
$$
thus obtaining a function that maps the annulus
$\mathcal{A}=\left\{\xi^{-1} <|v|<\xi\right\}$ to an
ellipse. Denote this ellipse as $\mathcal{E}_{a^2,b^2,\xi}$ and observe that it is
contained in the interior of
$\mathcal{E}_{a^2,b^2}$.
Therefore we have that the function
$$
f(\psi(c(v)))=
f\left (\left [\frac{1}{2}\left ( v+\frac{1}{v}\right )
+\frac{a^2+b^2}{b^2-a^2}\right ]\frac{b^2-a^2}{2} \right )
$$
is analytic on $\mathcal{A}$ and continuous on $|v|=\xi $.
The proof now proceeds as in the original
Bernstein Theorem. The Laurent expansion
$$
f(\psi(c(v)))=\sum_{\nu =-\infty}^{\infty}\alpha_{\nu}v^{\nu}
$$
converges in $\mathcal{A}$ with $\alpha_{-\nu}=\alpha_{\nu}$.
Moreover, we have the bound
$$
|\alpha_{\nu}|=\left | \frac{1}{2\pi \iu}\int_{|v|=\xi}
\frac{f(\psi(c(v)))}{v^{\nu +1}}dv \right |\leq\frac{M(\xi)}{\xi ^{\nu}},
$$
where $M(\xi)$ is the maximum value (in modulus) taken
by $f$ on the ellipse $\mathcal{E}_{a^2,b^2,\xi}$.

Now observe that $u=c(v)$ describes the real interval
$[-1,1]$ for $|v|=1$, so for $u\in [-1,1]$ we have
$$
f(\psi (u))=\alpha _0+2\sum_{\nu =1}^{\infty}\alpha _{\nu}T_{\nu}(u),
$$
where $T_{\nu}(u)$ is the $\nu$-th Chebyshev polynomial.
Since $\psi (u)$ is a linear transformation, we have
$E_k(f(z),[a^2,b^2])=E_k(f(u),[-1,1])$, so from the theory
of Chebyshev approximation \cite{meinardus} we obtain
$$
E_k(f,[a^2,b^2])=E_k(f(u),[-1,1])\leq 2M(\xi)
\sum_{\nu=k+1}^{\infty}\xi ^{-\nu}=\frac{2M(\xi)}{\xi-1}\xi ^{-k},
$$
hence the thesis.
Note that the explicit value of $K$ is computable.
\end{proof}

The following result is based on \cite[Thm.~1]{chuihasson}.

\begin{theorem}\label{thm3}
Let $f\in\mathcal{C}(I)$ be as in Theorem \ref{chtheorem}.
Then, for all $\xi$ with
$$
1<\xi<\overline{\xi}:=\frac{a+b}{a-b}, 
$$
there exists $C>0$ independent of $k$ such that
$$
E_k(f,I)\leq C\xi^{-\frac{k}{2}}.
$$
\end{theorem}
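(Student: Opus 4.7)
The plan is to reduce best polynomial approximation on the symmetric two-interval set $I=[-b,-a]\cup[a,b]$ to best polynomial approximation on the single interval $[a^2,b^2]$ by means of the substitution $y=x^2$, and then to invoke Theorem \ref{cor2}. This is the quantitative counterpart of the strategy used in \cite{chuihasson} to derive the $\limsup$-type statement of Theorem \ref{chtheorem}.

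First, I would decompose $f$ into its even and odd parts on $I$,
\[
f_e(x):=\tfrac12\bigl(f(x)+f(-x)\bigr),\qquad f_o(x):=\tfrac12\bigl(f(x)-f(-x)\bigr),
\]
and use the symmetry of $I$ to write $f_e(x)=g(x^2)$ and $f_o(x)=x\,h(x^2)$ for continuous functions $g,h$ on $[a^2,b^2]$. For any polynomials $p,q$ with $\deg p\le j_1$ and $\deg q\le j_2$, the function $P(x):=p(x^2)+x\,q(x^2)$ is a polynomial in $x$ of degree at most $\max(2j_1,2j_2+1)$; choosing $j_1,j_2$ so that this maximum is at most $k$ forces $\min(j_1,j_2)\ge (k-2)/2$. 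The triangle inequality gives, for $x\in I$,
\[
|f(x)-P(x)|\le\|g-p\|_{\infty,[a^2,b^2]}+b\,\|h-q\|_{\infty,[a^2,b^2]},
\]
so it suffices to bound the errors of best polynomial approximation of $g$ and $h$ on $[a^2,b^2]$.

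Second, I would verify that $g$ and $h$ satisfy the hypotheses of Theorem \ref{cor2}. Fixing the principal branch of $\sqrt{\cdot}$ on $\mathbb{C}\setminus(-\infty,0]$, we may write
\[
g(y)=\tfrac12\bigl(f_2(\sqrt{y})+f_1(-\sqrt{y})\bigr),\qquad
h(y)=\frac{1}{2\sqrt{y}}\bigl(f_2(\sqrt{y})-f_1(-\sqrt{y})\bigr).
\]
The ellipse $\mathcal{E}_{a^2,b^2}$ is tangent to the imaginary axis at its leftmost vertex $0$, so its interior is a subset of the open right half-plane and in particular omits the branch point $0$. Under $y\mapsto\sqrt{y}$ this interior is mapped analytically and bijectively into the open right half-plane, where $f_2$ is analytic by hypothesis; simultaneously $-\sqrt{y}$ lands in the open left half-plane, where $f_1$ is analytic. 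Hence $g$ is analytic on the interior of $\mathcal{E}_{a^2,b^2}$, and since $\sqrt{y}$ is nowhere zero there, so is $h$. Continuity of $g$ and $h$ on the closed interval $[a^2,b^2]$ is inherited from continuity of $f$ on $I$. Theorem \ref{cor2} then yields, for every $1<\xi<\overline{\xi}$, a constant $K$ (depending on $\xi$, $f_1$, $f_2$, but not on $k$) such that
\[
E_j(g,[a^2,b^2]),\ E_j(h,[a^2,b^2])\le K\xi^{-j}.
\]

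Combining these, setting $j=\lfloor k/2\rfloor$ (or $\lfloor k/2\rfloor-1$ for the $q$-polynomial when $k$ is even), assembling $P$ as above, and absorbing the $O(\xi^{1/2})$ parity correction into the constant, I obtain $E_k(f,I)\le C\xi^{-k/2}$, which is the desired bound. The main obstacle I anticipate is the analytic-piecing step in the second paragraph: one must confirm that two functions $f_1,f_2$, originally analytic only on disjoint half-planes, really do paste together through the square-root substitution into single-valued analytic functions $g,h$ on the interior of $\mathcal{E}_{a^2,b^2}$, and that the apparent singularity $1/\sqrt{y}$ in $h$ is not a genuine obstruction. Both concerns are resolved by the geometric observation that the interior of $\mathcal{E}_{a^2,b^2}$ lies entirely in the open right half-plane and omits the point $0$, after which the remainder is routine degree-counting and substitution into Theorem \ref{cor2}.
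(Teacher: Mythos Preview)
Your proof is correct and follows essentially the same route as the paper: reduce approximation on $I$ to approximation on $[a^2,b^2]$ via $y=x^2$, then invoke Theorem~\ref{cor2}. The only difference is organizational: the paper treats $f_1$ and $f_2$ separately, approximating $f_2(\sqrt{x})$ and $f_2(\sqrt{x})/\sqrt{x}$ to build an auxiliary polynomial $R_{2k+1}$ that matches $f$ on $[a,b]$ and is small on $[-b,-a]$, and then does the analogous construction for $f_1$ to get $S_{2k+1}$; you instead combine $f_1$ and $f_2$ from the outset through the even/odd decomposition, which spares you the extra step of checking smallness on the ``wrong'' interval. The two constructions produce the same approximant and the same constants up to trivial rearrangement.
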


\begin{proof} 
Let $P_k$ and $Q_k$ be polynomials of best uniform approximation
of degree $k$ on the interval $[a^2,b^2]$ for
the functions $f_2(\sqrt{x})$ and $f_2(\sqrt{x})/\sqrt{x}$,
respectively. Then by Theorem \ref{cor2} there are
constants $K_1$ and $K_2$ such that
\begin{equation}
\max_{x\in [a^2,b^2]} |P_k(x)-f_2(\sqrt{x})| \leq K_1\xi^{-k}\label{approx1}
\end{equation}
and
\begin{equation}
\max_{x\in [a^2,b^2]}|Q_k(x)-f_2(\sqrt{x})/\sqrt{x}|
\leq K_2\xi^{-k}.\label{approx2}
\end{equation}
We use the polynomials $P_k$ and $Q_k$ to define a third
polynomial $R_{2k+1}(x):=[P_k(x^2)+xQ_k(x^2)]/2$, of degree
$\leq 2k+1$, which approximates $f(x)$ on $[a,b]$
and has small norm on $[-b,-a]$. Indeed, from \eqref{approx1}
and \eqref{approx2} we have:
\begin{equation}\label{firstdis}
\begin{split}
\max_{x\in [a,b]}|R_{2k+1}(x)-f(x)| & \leq\frac{1}{2}\max_{x\in [a,b]}|P_k(x^2)-f(x)|+
\frac{1}{2}\max_{x\in [a,b]}|xQ_k(x^2)-f(x)|\\
& \leq\frac{1}{2}K_1\xi^{-k} + \frac{1}{2}bK_2\xi^{-k} =
\frac{K_1 + bK_2}{2}\xi^{-k}
\end{split}
\end{equation}
and
\begin{eqnarray}
\hspace{-.4in} \max_{x\in [-b,-a]} |R_{2k+1}(x)|\leq\frac{1}{2}
\max_{x\in [a,b]}|P_k(x^2)-f(x)+
f(x)-xQ_k(x^2)|\\
\hspace{-.2in} \leq\frac{1}{2}\max_{x\in [a,b]}|P_k(x^2)-f(x)|+
\frac{1}{2}\max_{x\in [a,b]}|xQ_k(x^2)-f(x)|
\leq\frac{K_1 + bK_2}{2}\xi^{-k}.
\end{eqnarray}
Similarly, we can find another polynomial $S_{2k+1}(x)$ such that
\begin{equation}
\max_{x\in [-b,-a]} |S_{2k+1}(x)-f(x)|\leq\frac{K_3 + bK_4}{2}\xi^{-k}
\end{equation}
and
\begin{equation}
\max_{x\in [a,b]}|S_{2k+1}(x)|\leq\frac{K_3 + bK_4}{2}\xi^{-k}.\label{lastdis}
\end{equation}
Then, from the inequalities \eqref{firstdis}-\eqref{lastdis} we have
\begin{eqnarray*}
\max_{x\in I}|R_{2k+1}(x)+S_{2k+1}(x)-f(x)|\leq \max_{x\in [a,b]}|R_{2k+1}(x)-f(x)|
+\max_{x\in [a,b]}|S_{2k+1}(x)|\\
+\max_{x\in [-b,-a]}|S_{2k+1}(x)-f(x)|+\max_{x\in [-b,-a]}|R_{2k+1}(x)|\\
\leq (K_1+K_3+b(K_2+K_4))\xi^{-k}\,,
\end{eqnarray*}
and therefore
$$
E_k(f,I)\leq \sqrt{\xi}\,(K_1+K_3+b\, (K_2+K_4))\,\xi^{-\frac{k}{2}},
$$
for odd values of $k$, and
$$
E_k(f,I)\leq \xi (K_1+K_3+b\, (K_2+K_4)) \xi^{-\frac{k}{2}}
$$
for even values of $k$.
This completes the proof. \end{proof}

In the following we assume, without loss of generality, that
$k$ is odd.
In order to obtain bounds on the density matrix,
we apply  Theorem \ref{thm3} to the step
function $f$ defined on $I$ as follows:
$$
f(x)=\left\{\begin{array}{ccc}
1&{\rm for}&-b\leq x\leq -a\\
0&{\rm for}&a\leq x\leq b\,,\end{array}\right.
$$
i.e., $f$ is the restriction of $f_1(z)\equiv 1$ on $[-b,-a]$
and the restriction of $f_2(z)\equiv 0$
on $[a,b]$.
 Here the polynomial
approximation of $f_2(\sqrt{x})$,
$f_2(\sqrt{x})/\sqrt{x}$ and $f_1(\sqrt{-x})$ is exact,
so we have $K_1=K_2=K_3=0$.
As for $K_4$, observe that $|1/\sqrt{z}|$ achieves its maximum
on the vertex of $\mathcal{E}_{a^2,b^2,\xi}$
with smallest abscissa; therefore we have
$$
K_4=\frac{2M(\xi)}{\xi-1},
$$
where
$$
M(\xi)=\frac{1}{\sqrt{z_0}}\quad\textnormal{with}\quad
z_0=\left[-\frac{1}{2}\left(\xi+\frac{1}{\xi}\right)+
\frac{a^2+b^2}{b^2-a^2}\right]\frac{b^2-a^2}{2}.
$$
Moreover, we find $R_{2k+1}(x)\equiv 0$
and $S_{2k+1}(x)=(1+xV_k(x^2))/2$,
where $V_k(x)$ is the polynomial of best uniform
approximation for $1/\sqrt{x}$ on $[a^2,b^2]$.
Thus, we obtain the bound
$$
E_k(f,I)\leq C \xi ^{-\frac{k}{2}},
$$
where
$C$ is given by
$$
C=\sqrt{\xi}\,K_4\,b.
$$

Let us now apply this result to our sequence of Hamiltonians.
We will assume that
the matrices are shifted so that $\mu=0$, that is,
we replace each $H_n$
by $H_n-\mu I_n$. Under this hypothesis, the natural
choice for $a$ is $a=\gamma$,
whereas $b$ is the smallest number such that
$\sigma(H_n)\subset [-b,-a]\cup [a,b]$
for every $n$.

Using the same argument used in section \ref{proofs}
for the derivation of matrix decay bounds
(see also \cite{benzigolub}
and \cite{benzirazouk}), we can obtain
bounds on the off-diagonal entries
of $f(H_n)$. If $H_n$ is banded with bandwidth $m$
independent of $n$, we have
\begin{equation}
\left |[P_n]_{ij} \right |=|[f(H_n)]_{ij}|\leq\sqrt{\xi}\,
\frac{2M(\xi)}{\xi-1}\,b\, \xi ^{-\frac{|i-j|}{2m}},\label{bound3}
\end{equation}
whereas if $H_n$ has a more general sparsity pattern we obtain
\begin{equation}
\left |[P_n]_{ij} \right |=|[f(H_n)]_{ij}|\leq\sqrt{\xi}\,
\frac{2M(\xi)}{\xi-1}\,b\, \xi ^{-\frac{d_n(i,j)}{2}},\label{bound2}
\end{equation}
where $d_n(i,j)$ is the distance between nodes $i$ and $j$
in the graph $G_n$ associated with $H_n$.

Next, we compare the bounds derived in this section with those
for the Fermi--Dirac approximation of the step function obtained
in section \ref{sec_fd}, using a suitable choice of the inverse
temperature $\beta$. Recall that if 
$\mathcal{E}_{\chi}$ denotes the
regularity ellipse for the Fermi--Dirac function,
the earlier bounds for the banded case are:
\begin{equation}
\left |[P_n]_{ij} \right |\leq\frac{2M(\chi)}{\chi-1}
\left(\frac{1}{\chi}\right)^{\frac{|i-j|}{m}}.\label{oldbound}
\end{equation}
For ease of computation, we assume in this section that
$\mu=0$ and that the spectrum of each matrix $H_n$ is contained
in $[-1,1]$. As explained in section \ref{sec_fd},
once $\gamma$ is known, we pick a tolerance $\delta$
and compute $\beta$ so that the Fermi--Dirac
function provides a uniform approximation of the step function
with error $\le \delta$ outside the gap:
$$
\beta\geq \frac{2}{\gamma} \ln\Big(\frac{1-\delta}{\delta}\Big) .
$$
Then the supremum of the set of admissible values of $\chi$, which ensures
optimal asymptotic decay in this framework, is
$$
\overline{\chi}=\left (\pi+\sqrt{\beta^2+\pi^2}\right )/\beta.
$$

\begin{figure}[t!]
\begin{center}
\includegraphics[width=0.70\textwidth]{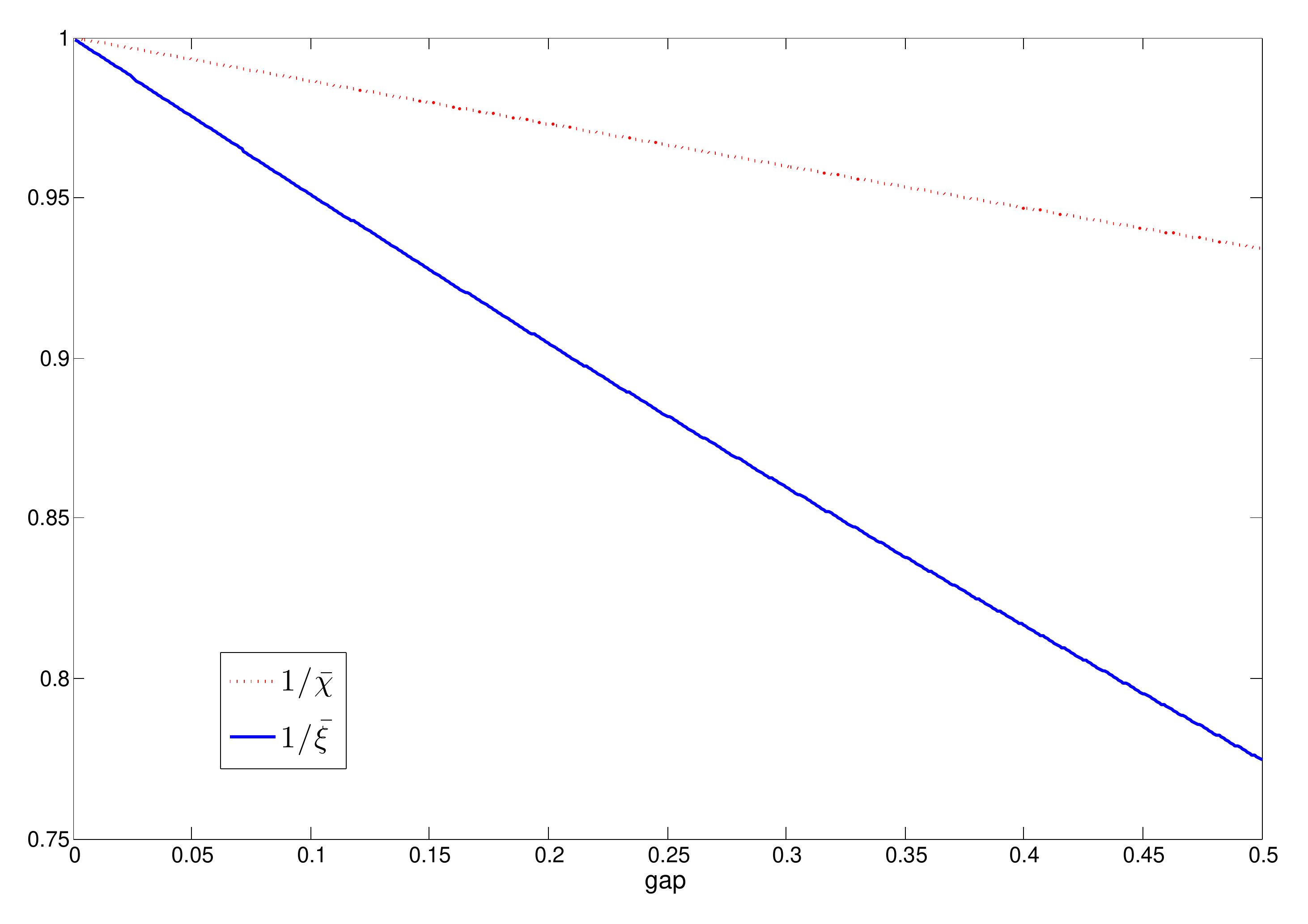}
\vspace{-0.15in}
\caption{Comparison of parameters
$1/\,\overline{\xi}$ and $1/\,\overline{\chi}$ for several values of the spectral gap.
Here $\delta=10^{-5}$.}
\label{chi_xi_1}
\end{center}
\end{figure}

\begin{figure}[h!]
\begin{center}
\includegraphics[width=0.70\textwidth]{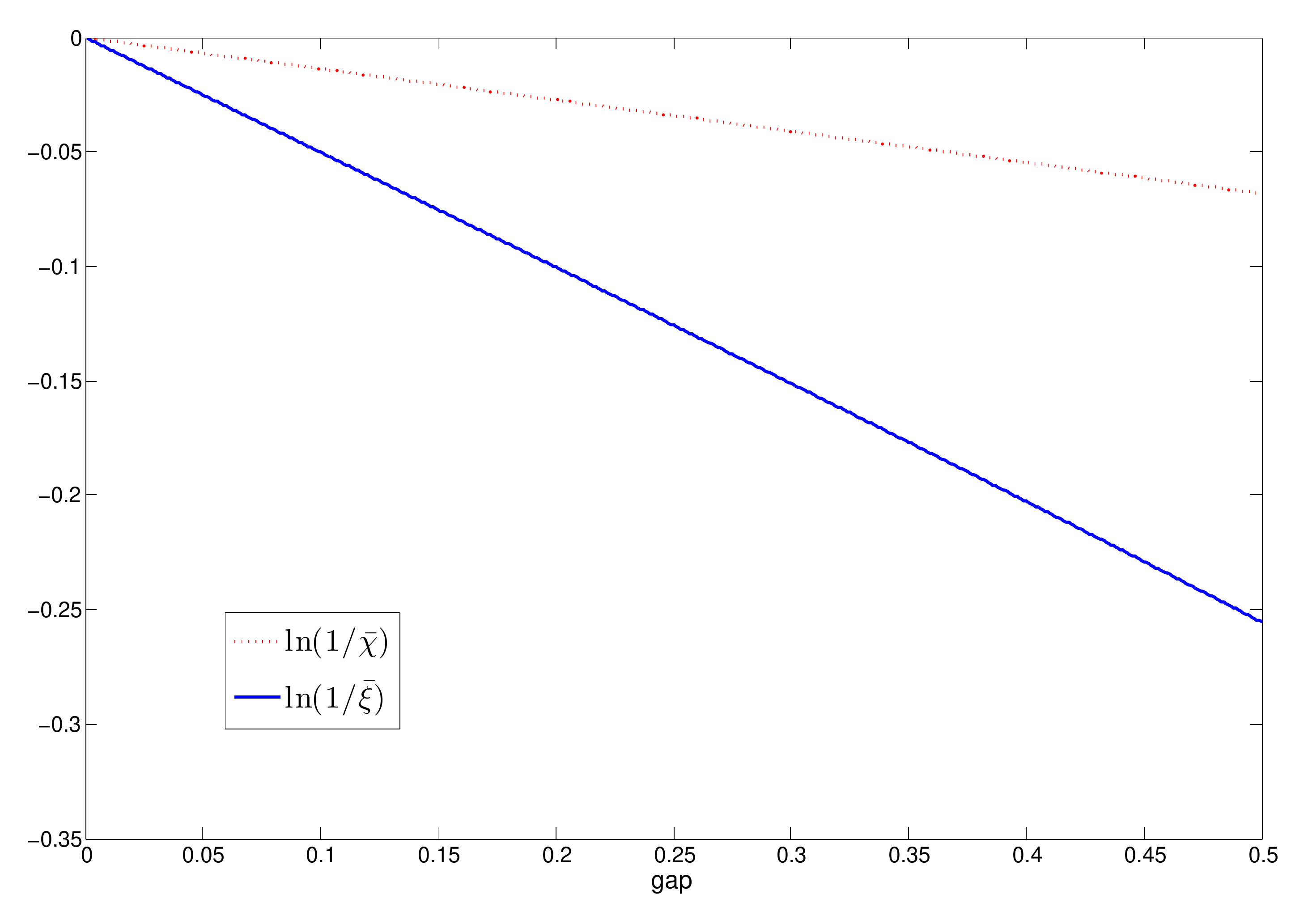}
\vspace{-0.15in}
\caption{Logarithmic plot of parameters $1/\,\overline{\xi}$ and
$1/\,\overline{\chi}$ w.r.t.~several values of the spectral gap.
Here $\delta=10^{-5}$.}
\label{chi_xi_2}
\end{center}
\end{figure}

Figures \ref{chi_xi_1} and \ref{chi_xi_2} compare the
values of $1/\,\overline{\xi}$ and $1/\,\overline{\chi}$ (which
characterize the behavior of the bounds \eqref{bound3}
and \eqref{oldbound}, respectively). Note that
in general we find $1/\,\overline{\xi}<1/\,\overline{\chi}$; this means
that the asymptotic decay rate is higher for the bound based
on disjoint interval approximation. Moreover, the disjoint
interval method directly approximates
the step function and therefore does not require one to choose
a tolerance for \lq\lq intermediate\rq\rq\  approximation.
As a result, the bounds based on disjoint interval approximation
prescribe a smaller truncation bandwidth $\bar m$ in the approximation
to the spectral projector in order to achieve a given level
of error. For instance, in the tridiagonal case ($m=1$) we
observed a factor of three reduction in $\bar m$ compared to the
previous bounds, independent of the size of the gap.

\subsection{Dependence of the rate of decay 
on the spectral gap}\label{dep_gap}

As already mentioned in section \ref{related}, the functional
dependence of the decay length (governing
the rate of decay in the density matrix) 
on the spectral gap has been the subject of some
discussion; see, for instance, \cite{baer,IBA,JK,prodan2,tar1,zhang}. 
Some of these authors have argued that the decay length
decreases like the square root of the gap if the Fermi
level is located near one of the gap edges (i.e., 
close to either $\varepsilon_{n_e}$ or to
$\varepsilon_{n_e+1}$), and like the gap itself if
the Fermi level falls in the middle of the gap.
These estimates hold for the small gap limit.

In this section we address this problem by studying
how the decay described by the bounds
\eqref{bound3} and \eqref{bound2} behaves asymptotically with 
respect to $\gamma$ or, equivalently, with respect to $a$ (see the notation
introduced in the previous section). Note that we are
assuming here that the Fermi level falls exactly in the middle
of the gap.

Let us rewrite
\eqref{bound3} in the form
$$
|[P_n]_{ij}|\leq C{\rm e}^{-\alpha |i-j|/m},
$$
where
$$
\alpha=\frac{1}{2}\ln \xi=\frac{1}{2}\ln\left(\frac{a+b}{b-a}\right).
$$
For a fixed $m$, the decay behavior is essentially
described by the parameter $\alpha$.
Let us assume for simplicity of notation that $b=1$, so that
the spectral gap is normalized
and the expression for $\alpha$ becomes
$$
\alpha=\frac{1}{2}\ln\left(\frac{1+a}{1-a}\right).
$$
The Taylor expansion of $\alpha$ for $a$ small yields
$$
\alpha=a+\frac{a^3}{3}+o\,(a^3)\,.
$$
Therefore, for small values of $\gamma$, the
decay behavior is described at first order by the gap itself,
rather than by a more complicated function of $\gamma$.
This result is consistent with similar ones
found in the literature \cite{IBA,JK,zhang}. The fact that some systems
exhibit density matrix decay lengths proportional to the square 
root of the gap (see, e.g., \cite{JK}) does not contradict
our result: since we are dealing here with upper bounds
a square root-dependence, which corresponds to
faster decay for small $a$, is still consistent with our bounds.
Given that our bounds are completely general, it does
not come as a surprise that we obtain the more conservative
estimate among the alternatives discussed in the literature.

\subsection{Dependence of the rate of decay on the temperature}\label{dep_T}

Another issue that has stirred some controversy in the literature 
concerns the precise rate of decay in the density matrix
in metals at positive temperature; see, e.g., the results and
discussion in \cite{baer,goe,IBA}. Recall that in metals at positive
temperatures $T$, the density matrix $F_n = f_{FD}(H_n)$ decays
exponentially.   The question is whether
the decay length  
is proportional to $T$ or to $\sqrt{T}$, for small $T$.
Our approach shows
that the decay length
is proportional to $T$.

Indeed, from the analysis in section \ref{sec_fd},
in particular Theorems \ref{thm1} and \ref{thm2}, we find that
the decay length $\alpha$ in the exponential 
decay bound (\ref{bound}) (or, more generally,
the decay length $\theta$ in the bound (\ref{generalbound}))  
behaves like ${\rm ln}\,\chi$, 
where -- assuming for simplicity that $\mu = 0$, as before -- the
parameter $\chi$ is any number satisfying 
$$1 < \chi < \overline{\chi}, \quad \overline{\chi} = 
\left (\pi + \sqrt{\beta^2 + \pi^2}\, \right )/\beta\,.$$

Letting $x = \pi/\beta = \pi k_B T$ and observing that for small $x$
$${\rm ln}\left (x + \sqrt{1 + x^2}\,\right) = x + o\,(x^2), $$
we conclude that, at low temperatures,
the decay length is proportional to $k_B T$.
This conclusion is in complete agreement
with the results in \cite{goe,IBA}.
To the best of our knowledge,
this is the first time this result has been established in a
fully rigorous and completely general manner.

\subsection{Other approaches}
Decay bounds on the entries of spectral projectors can also be
obtained from the contour integral representation
\begin{equation}\label{cauchy2}
 P_n = \frac{1}{2\pi \iu}\int_\Gamma (zI_n - H_n)^{-1}dz\,,
\end{equation}
where $\Gamma$ is a simple closed curve (counterclockwise oriented)
in $\complex$ surrounding a portion of the real axis containing
the eigenvalues of $H_n$ which correspond to the occupied states
and only those. Componentwise, (\ref{cauchy2}) becomes
$$[P_n]_{ij}=\frac{1}{2\pi \iu}\int_\Gamma \left [(zI_n - H_n)^{-1}
\right ]_{ij} dz\,, \quad 1\le i,j\le n,$$
from which we obtain
$$\left |[P_n]_{ij}\right | \le 
\frac{1}{2\pi}\int_\Gamma | \left [(zI_n - H_n)^{-1}
\right ]_{ij} |\, dz\,, \quad 1\le i,j\le n.$$
Assume the matrices $H_n$ are banded, with uniformly bounded spectra
and bandwidths as $n\to \infty$.
 By \cite[Prop.~2.3]{demkomosssmith} there exist, 
for all $z \in \Gamma$, explicitly computable constants
$c(z) \geq 0$ and $0 < \lambda(z) < 1$ (independent of $n$) such that
\begin{align}\label{res_est}
\big|\left[(zI_n-H_n)^{-1}\right]_{ij}\big| \leq c(z) [\lambda(z)]^{|i-j|},
\end{align}for all $i,j =1,\dots,n$. Moreover, $c$ and $\lambda$
depend continuously on $z \in \Gamma$. Since $\Gamma$ is compact we
can set
\begin{align}\label{step2}
c = \max_{z\in \Gamma} c(z) \quad \text{and} \quad \lambda =  \max_{z\in \Gamma} \lambda(z).
\end{align} Now let us assume that the matrices $H_n$ have spectral
gaps $\gamma_n$ satisfying $\inf_n \gamma_n > 0$. It is then clear that $c$ is finite 
and that $\lambda \in (0,1)$.
Hence, we obtain the following bound:
\begin{align}\label{cc_bound}
 \left |[P_n]_{ij}\right | \leq \left(c \cdot \frac{\ell(\Gamma)}{2\pi} \right) \lambda^{|i-j|},
\end{align}for all $i,j =1,\dots,n$, where $\ell(\Gamma)$ denotes the length of $\Gamma$.
Finally, letting $C = c\cdot \frac{\ell(\Gamma)}{2\pi}$ and $\alpha = -\ln \lambda$
we obtain the exponential decay bounds
\begin{equation}\label{cauchy_bd}
\left |[P_n]_{ij}\right | \le C\cdot {\rm e}^{-\alpha |i-j|}, \quad 1\le i,j\le n,
\end{equation}
with both $C>0$ and $\alpha > 0$ independent of $n$. As usual, the bounds
can be easily extended to the case of general sparsity patterns.
One disadvantage of this
approach is that explicit evaluation of the constants $C$ and $\alpha$ is 
rather complicated.
  
The integral representation (\ref{cauchy2}) is useful not only as
a theoretical tool, but also increasingly as a computational tool.
Indeed, quadrature rules with suitably chosen nodes
$z_1, \ldots ,z_k\in \Gamma$ can be used to approximate the integral in
(\ref{cauchy2}), leading to 
\begin{equation}\label{rational}
P_n\approx \sum_{i=1}^k w_i (z_i I_n - H_n)^{-1}
\end{equation}
for suitable quadrature weights $w_1,\ldots w_k$.
Note that this amounts to a rational approximation of $P_n=h(H_n)$.
In practice, using the trapezoidal rule with a small number of nodes 
suffices to achieve high
accuracy, due to the exponential convergence of this quadrature
rule for analytic functions \cite{DR}. 
Note that if $P_n$ is real then it is sufficient to use just the $z_i$
in the upper half-plane and then take the real part of the result 
\cite[page 307]{higham}.
If the spectral gap $\gamma_n$
for $H_n$ is not too small, all the resolvents $(z_i I_n - H_n)^{-1}$
decay rapidly away from the main diagonal, with exponential rate
independent of $n_e$. Hence, $O(n)$ approximation is possible, at least
in principle. Rational approximations of the type (\ref{rational}) are
especially useful in those situations where only selected entries
of $P_n$ are required. Then only the corresponding entries of the resolvents
$(z_i I_n - H_n)^{-1}$ need to be computed. For instance, in some
cases only the diagonal entries of $P_n$ are needed \cite{saad}. In others, 
only entries in positions corresponding to the non-zero entries
in the Hamiltonian $H_n$ must be computed; this is the case, 
for instance, when computing 
the objective function $\langle E\rangle = {\rm Tr} (P_nH_n)$ in
density matrix minimization algorithms.
Computing selected entries of a resolvent is not an
easy problem. However, progress has been made on this front in several recent
papers; see, e.g., \cite{li,lin1,lin3,ss09,tang}.

\subsection{Computational considerations}\label{pract}

In the preceding sections we have rigorously established exponential
decay bounds for zero-temperature density matrices corresponding to 
finite-range Hamiltonians with non-vanishing spectral gap (`insulators'), 
as well as for density matrices corresponding to arbitrary
finite-range Hamiltonians at positive electronic temperatures.
Our results are very general and apply to a wide variety
of physical systems and discretizations. Hence, a mathematical
justification of the physical phenomenon of `nearsightedness'
has been obtained, and the possibility of $O(n)$ methods firmly 
established.\footnote{Heuristics 
relating the \lq\lq nearsightedness range of electronic
matter\rq\rq\  and the linear complexity of the
Divide-and-Conquer method of Yang \cite{DC}, essentially
a domain decomposition approach to DFT,
were already given by Kohn himself; see, e.g., \cite{kohn,PK05}.}

Having thus achieved our main purpose, the question remains whether
our estimates can be of practical use in the design of $O(n)$
algorithms. As shown in section \ref{trunc}, having estimated 
the rate of decay in
the density matrix $P$ allows one to prescribe {\em a priori} a
sparsity pattern for the computed approximation $\tilde P$ to $P$.  
Having estimated an `envelope' for the non-negligible entries
in $P$ means that one can estimate beforehand
the storage requirements and set up static data structures 
for the computation of the approximate density matrix 
$\tilde P$. An added advantage is the possibility of
using the prescribed sparsity pattern to develop efficient parallel
algorithms; it is well known that adaptive computations,
in which the sparsity pattern is determined `on the fly', may
lead to load imbalances and loss of parallel efficiency
due to the need for large amounts of communication and
unpredictable memory accesses.  This is completely analogous
to prescribing a sparsity pattern vs.~using an adaptive one
when computing sparse approximate inverses
for use as preconditioners when solving linear systems, see \cite{Benzi02}.

Most of the $O(n)$ algorithms currently in use consist of 
iterative schemes producing increasingly
accurate approximations to the density matrix. These approximations  
may correspond to successive terms in an expansion of $P$
with respect to a prescribed basis
\cite{goedeckercolombo,baerhg3,lin2}, or they may be the
result of a gradient or descent method in density matrix
minimization approaches \cite{CKP08,challacombe,LNV,MS}. Closely related 
methods include {\em purification} and algorithms
based on approximating the sign function \cite{NS00};
we refer again to \cite{bowler2,niklasson,RRS11} for 
recent surveys on the state of the
art of linear scaling methods for electronic structure.
Most of these algorithms construct a sequence of approximations 
$$P^{(0)}, P^{(1)},\ldots ,P^{(k)},\ldots$$
which, under appropriate conditions, converge to $P$. Each iterate
is obtained from the preceding one by some matrix-matrix
multiplication, or powering, scheme; each step introduces new
nonzeros (fill-in), and the matrices $P^{(k)}$ become increasingly
dense. The exponential decay property, however, implies that
most of these nonzeros will be negligible, with only $O(n)$
of them being above any prescribed threshold $\delta >0$. Clearly,
knowing {\em a priori} the location of the non-negligible
entries in $P$ can be used to drastically reduce the computational
burden and to achieve linear scaling, since only those entries
need to be computed. Negligible entries that fall {\em within}
the prescribed sparsity pattern may be removed using a drop
tolerance; this strategy further decreases storage and arithmetic
complexity, but 
its implementation demands the use of dynamic data structures.

An illustration of this use of the decay estimates can be found
for instance
in \cite{benzirazouk}, where a Chebyshev expansion of the Fermi--Dirac
function $f_{FD}(H)$ was used to approximate the density matrix at
finite temperatures. Given a prescribed error tolerance, 
exponential decay bounds were applied to the Fermi--Dirac
function to determine the truncation bandwidth needed to satisfy
the required approximation error. When computing the polynomial
$p_k(H)\approx f_{FD}(H)$ using the Chebyshev expansion, only entries within the
prescribed bandwidth were retained. Combined with an estimate of
the approximation error obtained by monitoring the magnitude of
the coefficients in the Chebyshev expansion, this approach worked
well for some simple 1D model problems resulting
in linear scaling computations. A related approach, 
based on qualitative decay estimates for the density 
matrix, was already used in \cite{baerhg}. These authors  
present computational
results for a variety of 1D and 2D systems 
including insulators at zero
temperature and metals at finite temperature; 
see further \cite{baerhg3}. 

Unfortunately, the practical usefulness of our bounds 
for more realistic calculations is limited.
The bounds are generally pessimistic and tend to be
overly conservative, especially for the case of zero or low temperatures.
This is to be expected, since
the bounds were obtained by
estimating the degree of a polynomial approximation to the Fermi--Dirac 
matrix function needed to satisfy a prescribed error tolerance. These bounds
tend to be rather pessimistic because they do not take into account the
possibility of numerical cancellation when evaluating the matrix polynomial.
For instance, the bounds must apply in the worst-case scenario where the
Hamiltonian has non-negative entries and the approximating polynomial has
nonnegative coefficients. Moreover, the bounds do not take into account the
size of the entries in the Hamiltonian, particularly the fact that the nonzeros
within the band (or sparsity pattern) are not of uniform size but may be
spread out over several orders of magnitude. 
It should be emphasized that the presence of a gap is only a
sufficient condition for localization of the density matrix, not a
necessary one: it has been pointed out, for example in \cite{maslen}, that
disordered systems may exhibit strong localization
even in the absence of a well-defined gap. 
This is the case, for instance, of the 
{\em Anderson model of localization} in condensed
matter physics \cite{And58}.
Obviously, our approach is unable to account for such phenomena 
in the zero temperature case. 
The theory reviewed in this paper is primarily a qualitative one;
nevertheless, it captures many of the features of actual physical systems, like
the asymptotic dependence of the decay rate on the gap size or on
the electronic temperature.  

 A natural question is whether the bounds can be improved to the point
where they can be used to obtain practical estimates of the entries in 
the density matrix. In order to achieve this, 
additional assumptions on the 
Hamiltonians would be needed, making the theory less general. In other
words, the price we pay for the generality of our theory is 
that we get pessimistic bounds. 
Recall that for a given sparsity pattern in the
normalized Hamiltonians $H_n$ our decay bounds depend on just 
one essential parameter, the 
gap $\gamma$. Our bounds are the same no matter 
what the eigenvalue distribution is to the left of the
highest occupied level, $\varepsilon_{n_e}$, 
and to the right to the lowest unoccupied one, $\varepsilon_{n_e+1}$.
 If more spectral information were at hand,
the bounds could be improved. 
 The situation is very similar to 
that arising in the derivation of error bounds for the convergence
of Krylov methods, such as the CG method for solving symmetric positive
definite linear systems $Ax=b$; see, e.g., \cite[Theorem 10.2.6]{GVL}.
 Bounds based on the spectral condition
number $\kappa_2 (A)$ alone, while sharp, do not in general capture the actual
convergence behavior of CG. They represent the
worst-case behavior, which is rarley observed in practice. 
Much more accurate bounds can
be obtained by making assumptions on the distribution of
the eigenvalues of $A$. For instance, if $A$ has only $k$ distinct
 eigenvalues, then the CG method converges (in exact arithmetic)
to the solution $x_*=A^{-1}b$ in at most $k$ steps. Similarly, suppose
the Hamiltonian $H_n$ has only $k < n$ distinct eigenvalues (with $\mu$ 
not one of them), and that the multiplicities of the eigenvalues 
to the left of $\mu$ add up
to $n_e$, the number of electrons. Then there
is a polynomial $p_k(\lambda)$ of degree at most $k-1$ such that 
$p_k(H_n)=P_n$, the density matrix. This is just the interpolation
polynomial that takes the value 1 on the eigenvalues to the
left of $\mu$, and zero on the eigenvalues to the right of $\mu$.
This polynomial \lq\lq approximation\rq\rq\ is actually exact.
 If $k\ll n$, and is independent of $n$,
then $P_n$ will be a matrix with $O(n)$ nonzero entries; moreover,
the sparsity pattern of $P_n$ can be determined {\em a priori} from the
graph structure of $H_n$. Another situation is that in which the
eigenvalues of $H_n$ fall in a small number $k$ of narrow bands,
or tight clusters,
with the right-most band to the left of $\mu$ well-separated
from the left-most band to the right of $\mu$. In this case we
can find again a low-degree polynomial $p_k(\lambda)$ with $p_k(H_n)\approx P_n$,
and improved bounds can be obtained. 

The problem, of course, is that these are rather special eigenvalue
distributions, and it is difficult to know 
{\em a priori} whether such conditions
hold or not. 

Another practical issue that should be at least briefly mentioned is
the fact that our bounds assume knowledge of lower and upper bounds on
the spectra of the Hamiltonians $H_n$, as well as estimates for
the size and location of the spectral gap (this is also needed in order
to determine the Fermi level $\mu$). These issues have received a great
deal of attention in the literature, and here we limit ourselves to
observe that $O(n)$ procedures exists to obtain sufficiently accurate
estimates of these quantities; see, e.g., \cite{goedecker1}.

\section{Transformation to an orthonormal basis}\label{overlap}

In this section we discuss the transformation of an Hamiltonian from
a non-orthogonal to an orthogonal basis. The main point is that while
this transformation results in matrices with less sparsity, the transformed
matrices retain the decay properties of the original matrices, only with
(possibly) different constants. What is important, from the point of view of
asymptotic complexity, is that the rate of decay remains
independent of system size.

We begin with a discussion of decay in the inverse of the overlap matrix. 
To this end, consider a sequence $\{S_n\}$ of overlap matrices of
size $n = n_b\cdot n_e$, with $n_b$ constant and $n_e$ increasing to infinity. 
We make the following assumptions:

\begin{enumerate}
\item Each $S_n$ is a banded symmetric positive definite (SPD) matrix 
with unit diagonal entries and
with bandwidth uniformly bounded with respect to $n$;
\item The spectral condition number (ratio of the largest to the smallest
eigenvalue) of each $S_n$, $\kappa_2(S_n)$, is uniformly bounded with respect
to $n$. Because of assumption 1, this is equivalent to 
requiring that the smallest eigenvalue of
$S_n$ remains bounded away from zero, for all $n$.
\end{enumerate}

As always in this paper, the
bandedness assumption in item 1 is not essential and can be replaced by
the weaker hypothesis that each $S_n$ is sparse and that the corresponding
graphs $\{G_n\}$ have bounded maximal degree with respect
to $n$. Actually, it would be enough to require that the sequence $\{S_n\}$ 
has the exponential decay property relative to a sequence of graphs $\{G_n\}$
of bounded maximal degree. In order to simplify the discussion, and also in
view of the fact that overlap matrices usually exhibit 
exponential or even super-exponential
decay, we assume from the outset that each $S_n$ has already been truncated
to a sparse (or banded) matrix. Again, this is for notational convenience only,
and it is straightforward to modify the following arguments to account for the
more general case.  
On the other hand, the assumption on
condition numbers in item 2 is essential and cannot be weakened. 

\begin{remark}
We note that assumption 2 above
is analogous to the condition that the sequence
of Hamiltonians $\{H_n\}$ has spectral gap bounded below uniformly in $n$; while
this condition ensures (as we have shown) the exponential decay property in the
associated spectral projectors $P_n$, assumption 2 above insures exponential decay
in the inverses (or inverse factors) of the overlap matrices. Both conditions  
amount to asking that the corresponding problems be uniformly well-conditioned
in $n$. The difference is that the decay on the spectral projectors
depends on the spectral gap of the Hamiltonians and therefore on the
nature of the system under study (i.e., insulator vs.~metallic
system),
whereas the sparsity and spectral properties of the overlap matrices 
depend on other features of the system, mainly the inter-atomic distances. 
\end{remark}

In the following we shall need some basic results  
on the decay of the inverses \cite{demkomosssmith}, 
inverse Cholesky factors \cite{benzituma} and inverse
square roots (L\"owdin factors) \cite{benzigolub}
of banded SPD matrices; see also \cite{jaffard}.

Let $A$ be SPD and $m$-banded, and let $a$ and $b$ denote the
smallest and largest eigenvalue of $A$, respectively. Write
$\kappa$ for the spectral condition number $\kappa_2(A)$ of $A$  
(hence, $\kappa = b/a$).
Define
$$q := \frac{\sqrt{\kappa} - 1}{\sqrt{\kappa} + 1} \quad
{\rm and} \quad  \lambda :=q^{1/m} \,.$$ 
Furthermore, let $K_0 :=(1+\sqrt{\kappa})^2/(2b)$. In \cite{demkomosssmith},
Demko et al.~obtained the following bound on the entries of $A^{-1}$:
\begin{equation}\label{dms}
|[A^{-1}]_{ij}| \le K\, \lambda^{|i-j|}, \quad 1\leq i,j \leq n,
\end{equation}
where $K:=\max \{a^{-1},K_0\}$. Note that the bound (\ref{dms}) `blows up'
as $\kappa \to \infty$, as one would expect.

As shown in \cite{benzituma}, the decay bound (\ref{dms}) and the bandedness
assumption on $A$ imply a similar decay bound on the inverse Cholesky
factor $Z=R^{-1} = L^{-T}$, where $A= R^TR = LL^T$ with $R$ upper triangular
($L$ lower triangular). Assuming that $A$ has been scaled so that 
$\max_{1\le i \le n} A_{ii} = 1$ (which is automatically true if $A$ is 
an overlap matrix corresponding to a set of normalized basis functions), we have
\begin{equation}\label{z_bound}
|Z_{ij}| \le K_1\,  \lambda^{j-i}, \quad  j \ge i \,,
\end{equation}
with $K_1 = K\frac{1 - \lambda^m}{1 - \lambda}$; here $K$, $\lambda$ are
the same as before. We further note that while $K_1 > K$, for some classes
of matrices it is possible to show that the actual 
magnitude of the $(i,j)$ entry of $Z$
(as opposed to the bound (\ref{z_bound})) is actually less than the magnitude
of the corresponding entry of $A^{-1}$. This is true, for instance, for 
an irreducible $M$-matrix; see \cite{benzituma}.

Finally, let us consider the inverse square root, $A^{-1/2}$. In \cite{benzigolub}
the following bound is established:
\begin{equation}\label{l_bound}
\left |[A^{-1/2}]_{ij} \right | \le K_2\, \lambda^{|i-j|}, \quad 1\le i,j\le n \,.
\end{equation}
Here $K_2$ depends again on the extreme eigenvalues $a$ and $b$ of $A$,
whereas $\lambda = q^{1/m}$, where now $q$ is any number satisfying the 
inequalities
$$\frac{\sqrt{\kappa} - 1}{\sqrt{\kappa} + 1} < q < 1 \,.$$
As before, the bound (\ref{l_bound}) blows up as $\kappa \to \infty$, as one would expect.

Introducing the positive scalar $\alpha = - {\rm ln}\, \lambda$, we can rewrite
all these bounds in the form
$$|B_{ij}| \le K\, {\rm e}^{-\alpha |i-j|}, \quad 1\le i,j \le n $$
for the appropriate matrix $B$ and suitable constants $K$ and $\alpha > 0$. 

Let now $\{S_n\}$ be a sequence of $n\times n$ overlap
matrices, where $n=n_b\cdot n_e$ with $n_b$ fixed and $n_e\to \infty$. 
Assuming that the matrices $S_n$ satisfy the above assumptions 1-2,
then their inverses
satisfy the uniform exponential decay bounds (\ref{dms}), with 
$K$ and $\lambda$ 
constant and independent of $n$. Hence, as discussed in section
\ref{trunc}, for any given $\epsilon > 0$
there exists an integer $\bar m$ independent of $n$ such that
each matrix $S_n$ in the sequence can be approximated, in norm,
by an $\bar m$-banded matrix with an error less than $\epsilon$.
As usual, this result can be extended from the banded case to the
sparse case, assuming that the corresponding graphs $G_n$ have
bounded maximal degree as $n\to \infty$.
Moreover, under assumptions 1-2 above, 
the inverse Cholesky factors $Z_n$ satisfy a uniform (in $n$) exponential decay
bound of the type (\ref{z_bound}), and therefore uniform approximation 
with banded triangular matrices is possible. Again, generalization to
more general sparsity patterns is possible, provided the usual assumption
on the maximum degree of the corresponding graphs $G_n$ holds.
Similarly, under the same conditions we obtain a uniform rate
of exponential decay for the entries of the inverse 
square roots $S_n^{-1/2}$, with a corresponding
result on the existence of a banded (or sparse) approximation. 

Let us now consider the sequence of transformed Hamiltonians, $\tilde H_n
= Z_n^TH_nZ_n$. Here $Z_n$ denotes either the inverse Cholesky factor or
the inverse square root of the corresponding overlap matrix $S_n$. Assuming
that the sequence $\{H_n\}$ satisfies the off-diagonal exponential decay property and
that $\{S_n\}$ satisfies assumptions 1-2 above, it follows from the decay
properties of the matrix sequence $\{Z_n\}$ that the sequence $\{\tilde H_n\}$
also enjoys off-diagonal exponential decay. This is a straightforward
consequence of the following result, which is adapted from a similar
one for infinite matrices due to Jaffard \cite[Proposition 1]{jaffard}. 

\begin{theorem}
Consider two sequences $\{A_n\}$ and $\{B_n\}$
of $n\times n$ matrices (where $n\to \infty$) whose entries satisfy
$$\left |[A_n]_{ij}\right | \le c_1\, {\rm e}^{-\alpha |i-j|} \quad {\rm and} 
  \quad \left |[B_n]_{ij} \right | \le c_2\, {\rm e}^{-\alpha |i-j|}\,, 
\quad 1\le i,j\le n\,,$$
where $c_1$, $c_2$ and $\alpha >0$ are independent of $n$. Then
the sequence $\{C_n\}$, where $C_n = A_nB_n$, satisfies a similar bound:
\begin{equation}\label{c_bound}
\left |[C_n]_{ij}\right | \le c\, {\rm e}^{-\alpha' |i-j|}, \quad 1\le i,j\le n\,,
\end{equation}
for any $0<\alpha'<\alpha$, with $c$ independent of $n$.
\end{theorem}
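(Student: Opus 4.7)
The plan is to estimate $|[C_n]_{ij}|$ directly from the definition of matrix multiplication and the hypothesized exponential bounds on $A_n$ and $B_n$, using a standard splitting of the exponent to trade a small amount of decay rate for a uniformly bounded sum. Concretely, I would start from
\[
\bigl|[C_n]_{ij}\bigr| \;\le\; \sum_{k=1}^{n} \bigl|[A_n]_{ik}\bigr|\,\bigl|[B_n]_{kj}\bigr|
\;\le\; c_1 c_2 \sum_{k=1}^{n} e^{-\alpha(|i-k|+|k-j|)}
\]
and then apply the elementary inequality $|i-k|+|k-j| \ge |i-j|$, which is just the triangle inequality on $\mathbb{Z}$.

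The key step is to split the exponent. Given any $0 < \alpha' < \alpha$, I would write $\alpha = \alpha' + (\alpha - \alpha')$ and use
\[
e^{-\alpha(|i-k|+|k-j|)} \;\le\; e^{-\alpha'|i-j|}\, e^{-(\alpha-\alpha')(|i-k|+|k-j|)}
\;\le\; e^{-\alpha'|i-j|}\, e^{-(\alpha-\alpha')|i-k|},
\]
where the second inequality simply drops the nonnegative term $|k-j|$ from the second factor. Summing over $k$ and extending the range to all of $\mathbb{Z}$ gives
\[
\sum_{k=1}^{n} e^{-(\alpha-\alpha')|i-k|} \;\le\; \sum_{\ell=-\infty}^{\infty} e^{-(\alpha-\alpha')|\ell|} \;=\; \frac{1+e^{-(\alpha-\alpha')}}{1-e^{-(\alpha-\alpha')}},
\]
which is finite precisely because $\alpha' < \alpha$, and, crucially for our purposes, is independent of $n$ (and of $i$). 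Combining these estimates yields (\ref{c_bound}) with
\[
c \;=\; c_1 c_2\, \frac{1+e^{-(\alpha-\alpha')}}{1-e^{-(\alpha-\alpha')}}.
\]

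There is no real obstacle here: the only subtlety is the need to lose an arbitrarily small amount of decay ($\alpha \to \alpha'$) in order to make the geometric series converge uniformly in $n$. The loss is unavoidable in this framework, as can be seen by considering the diagonal case $i=j$, where the sum $\sum_k e^{-2\alpha|i-k|}$ must be absorbed into the prefactor. Once this point is accepted, the argument is a one-line application of the triangle inequality plus summation of a geometric series, and the $n$-independence of the constants follows immediately from the $n$-independence of $c_1, c_2, \alpha$.
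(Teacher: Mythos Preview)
Your proof is correct and follows essentially the same approach as the paper: both use the triangle inequality $|i-k|+|k-j|\ge |i-j|$ to extract the factor $e^{-\alpha'|i-j|}$ and then control the remaining sum by a geometric series in $e^{-(\alpha-\alpha')}$, arriving at the same constant $c = c_1 c_2\,\frac{1+e^{-(\alpha-\alpha')}}{1-e^{-(\alpha-\alpha')}}$. The only cosmetic difference is that you retain $|i-k|$ in the residual exponential while the paper retains $|k-j|$, which is immaterial.
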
 
\begin{proof}
First note that the entries of each $A_n$ clearly satisfy 
$$\left |[A_n]_{ij}\right | \le c_1\, {\rm e}^{-\alpha' |i-j|} \quad {\rm for\,\, any}
\quad  \alpha' < \alpha\,.$$ 
Let $\omega = \alpha - \alpha'$. Then $\omega > 0$ and the entries
$[C_n]_{ij}$ of $C_n= A_nB_n$ satisfy
$$\left |[C_n]_{ij}\right | \le \sum_{k=1}^n 
\left |[A_n]_{ik}\right | \left |[B_n]_{kj} \right |
\le c_1 c_2 \, \left (\sum_{k=1}^n {\rm e}^{-\omega |k-j|}\right ){\rm e}^{-\alpha' |i-j|}\,.$$
To complete the proof just observe that for any $j$, 
$$\sum_{k=1}^n {\rm e}^{-\omega |k-j|} 
= \sum_{k=0}^{j-1}{\rm e}^{-\omega k} + 
  \sum_{k=1}^{n-j} {\rm e}^{-\omega k}
< \sum_{k=0}^{\infty} {\rm e}^{-\omega k} +
  \sum_{k=1}^{\infty} {\rm e}^{-\omega k}
= \frac{1+ {\rm e}^{-\omega}}
{1- {\rm e}^{-\omega}}\,.$$
Since the last term is independent of $n$, the entries of $C_n$ satisfy
(\ref{c_bound}) with a constant $c$ that is also independent of $n$.
\end{proof}

The foregoing result can obviously be extended to the product of three 
matrices. Thus, the entries of the matrix sequence $\{\tilde H_n\}$,
where $\tilde H_n = Z_n^TH_nZ_n$, enjoy the exponential off-diagonal decay
property:
$$\left |[\tilde  H_n]_{ij} \right | \le c \, {\rm e}^{-\alpha |i-j|},
\quad 1 \le i,j \le n\,,$$
for suitable constants $c$ and $\alpha > 0$. 

Alternatively, one could first approximate $H_n$ and $Z_n$ with banded matrices
$\bar{H}_n$ and $\bar{Z}_n$ and then define the 
(approximate) transformed Hamiltonian
as $\tilde H_n := \bar{Z}_n^T \bar{H}_n \bar{Z}_n$, 
possibly subject to further 
truncation. Using the fact that both $H_n$ and $Z_n$ have 2-norm bounded
independently of $n$, 
it is easy to show that the final approximation error can be reduced below
any prescribed tolerance by reducing the error in $\bar{H}_n$ and 
$\bar{Z}_n$.
Hence, with either approach,
the transformed Hamiltonians $\tilde H_n$ can be approximated uniformly
in $n$ within a prescribed error by banded matrices of constant bandwidth,
just like the original (\lq\lq non-orthogonal\rq\rq) Hamiltonians. While the
bandwidth of the approximations will be larger than for the original
Hamiltonians, the truncated matrices retain a good deal of sparsity and
asymptotically contain $O(n)$  non-zeros.
Hence, we have a justification of the statement 
(see section \ref{sec:Introduction}) that in our theory we
can assume from the outset that the basis set $\{\phi_i\}_{i=1}^n$ is
orthonormal.

In Fig.~\ref{fig_H_jacek} we show the Hamiltonian $H$ for the
already mentioned linear alkane ${\rm C}_{52}{\rm H}_{106}$
(see section \ref{sec_density}) 
discretized in a Gaussian-type orbital basis (top) and
the `orthogonalized' Hamiltonian $\tilde H = \bar{Z}^T \bar{H} 
\bar{Z}$
(bottom). This figure shows that while the transformation to orthogonal
basis alters the magnitude of the entries in the Hamiltonian,
the bandwidth of $\tilde H$ (truncated to a tolerance of $10^{-8}$) is
only slightly wider than that of $H$. In this case the overlap
matrix $S$ is well-conditioned, hence the entries of $Z$ exhibit fast
decay. An ill-conditioned overlap matrix would lead to a less sparse
transformed Hamiltonian $\tilde H$.   

As usual, the bandedness assumption was made for simplicity of
exposition only; similar bounds can be obtained for more general sparsity
patterns, assuming the matrices $H_n$ and $S_n$ have the exponential
decay property relative to a sequence $\{G_n\}$ of graphs having 
maximal degree uniformly bounded with respect to $n$.

It is important to emphasize that in practice, the explicit formation
of $\tilde H_n$ from $H_n$ and $Z_n$ is not needed and is never carried
out. Indeed, in all algorithms for electronic structure computation  
the basic matrix operations are matrix-matrix and matrix-vector products,
which can be performed without explicit transformation of the
Hamiltonian to an orthonormal basis. On the other hand, for the
study of the decay properties it is convenient to assume that all the
relevant matrices are explicitly given in an orthogonal representation.  

\begin{figure}[t!]
\vspace{-1.1in}
\begin{center}
\includegraphics[width=0.65\textwidth]{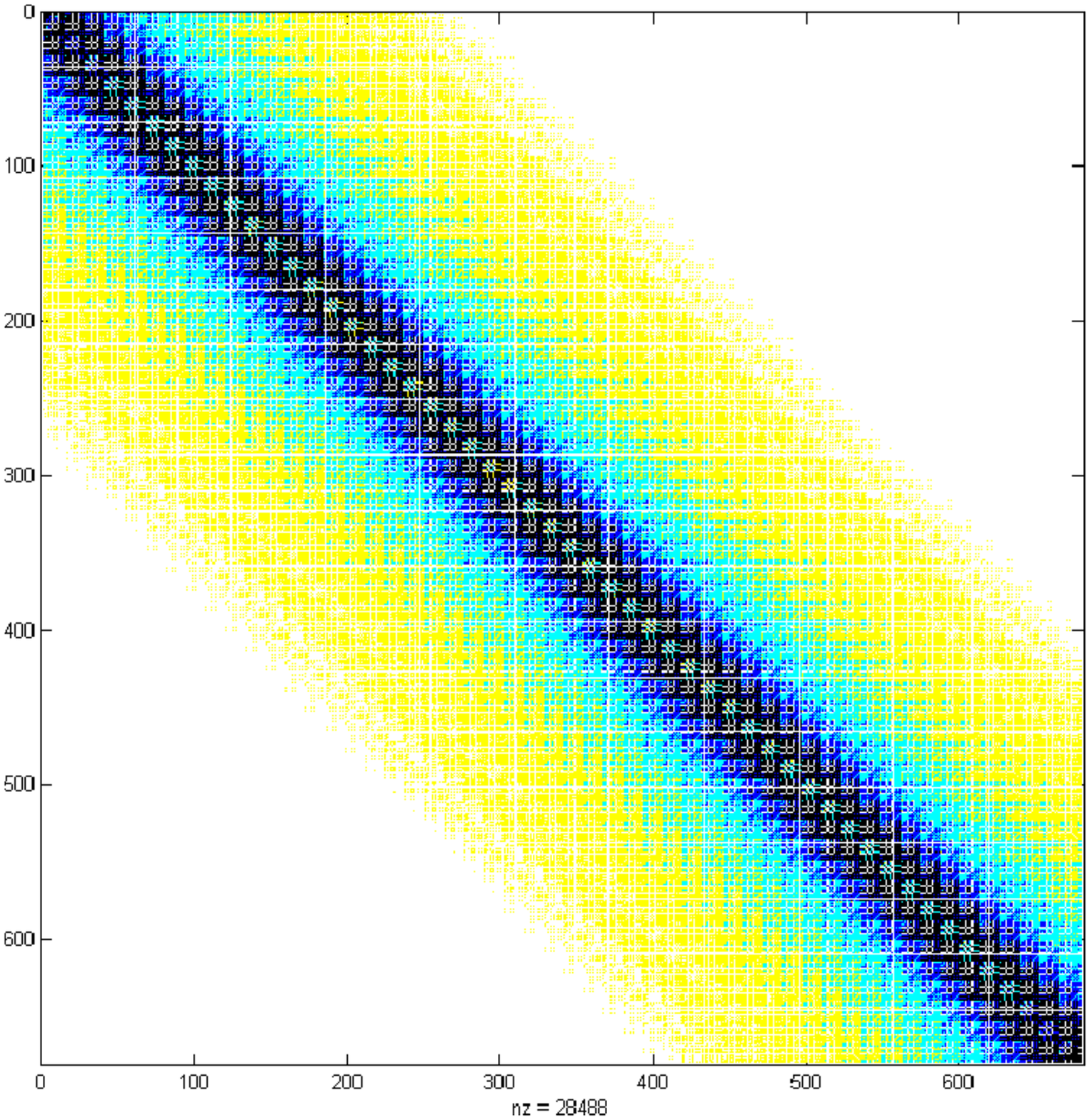}
\end{center}
\vspace{-1.1in}
\begin{center}
\includegraphics[width=0.65\textwidth]{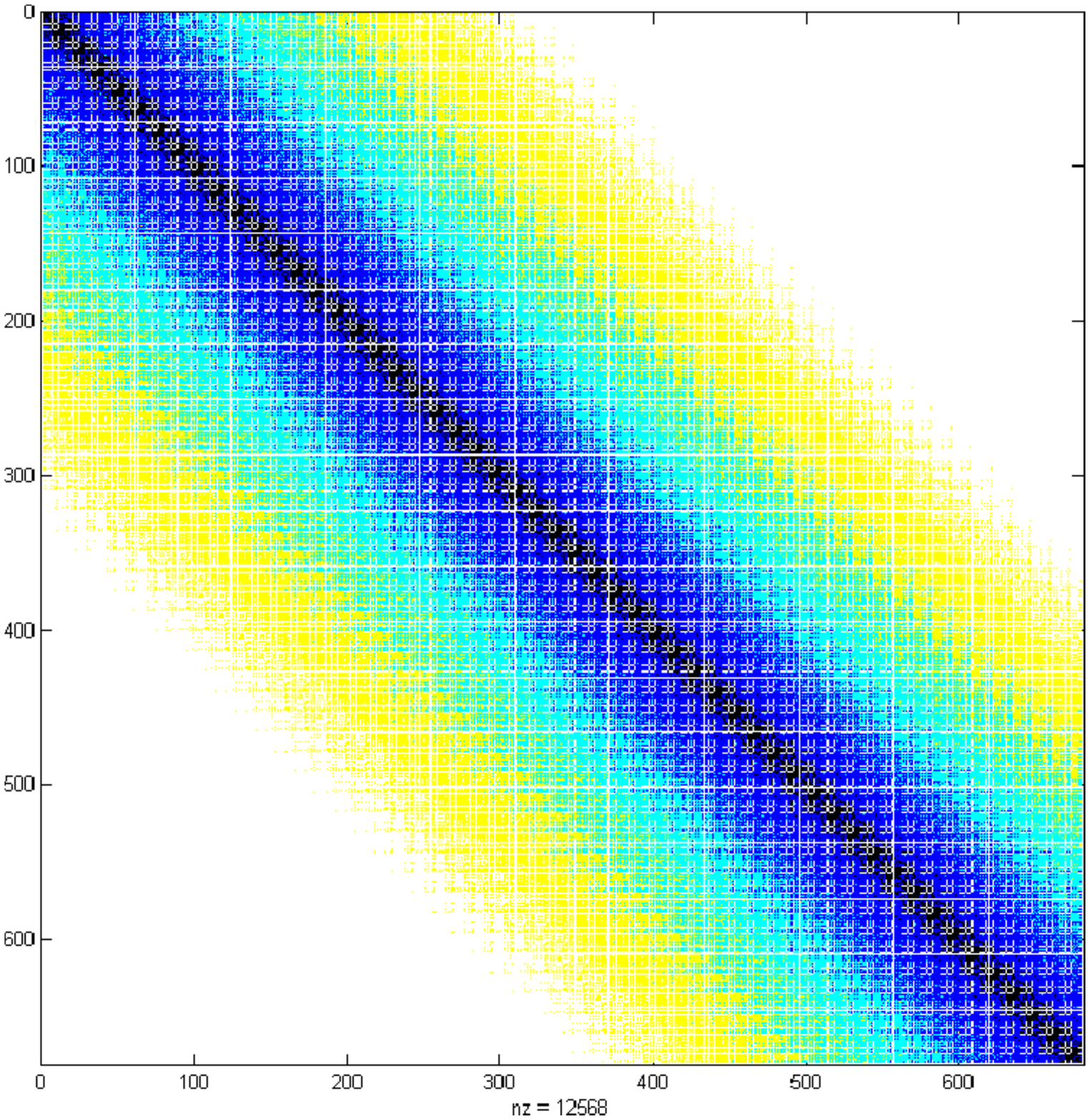}
\end{center}
\vspace{-0.1in}
\caption{Magnitude of the entries in the Hamiltonian
for the $C_{52}H_{106}$ linear alkane. Top: non-orthogonal (GTO) basis.
Bottom: orthogonal basis. White: $<10^{-8}$; yellow:
$10^{-8}-10^{-6}$; green: $10^{-6}-10^{-4}$;
blue: $10^{-4}-10^{-2}$; black: $>10^{-2}$.
 Note: {\tt nz} refers to the number of
`black' entries.}\label{fig_H_jacek}
\end{figure}
 
One last issue to be addressed is whether the transformation to an orthonormal
basis should be effected via the inverse Cholesky factor or via the L\"owdin (inverse
square root) factor of the overlap matrix. 
Comparing the decay bounds for the two factors suggests that the inverse 
Cholesky factor should be preferred (smaller $\alpha$). Also note
that the inverse Cholesky factor is triangular, and its sparsity
can be increased by suitable reorderings of the overlap matrix. The
choice of ordering may also be influenced by the
computer architecture used. We refer to \cite{CS96} for the use
of bandwidth-reducing orderings like reverse Cuthill--McKee, and
to \cite{challacombe2} for the use of space-filling curve orderings
like the 3D Hilbert curve
to improve load balancing and data locality on parallel architectures.
In contrast, the L\"owdin 
factor is a full symmetric matrix, regardless of the ordering. On the other hand,
the multiplicative constant $c$ is generally smaller for the L\"owdin
factor. Closer examination of a few examples suggests that 
in practice there is no great difference in the actual decay
behavior of these two factors. However, approximating $S_n^{-1/2}$ is
generally more expensive and considerably more involved than 
approximating the inverse
Cholesky factor. For the latter, the AINV algorithm \cite{bmt96} and its
variants \cite{challacombe,RBHN08,xiang} are quite efficient and have been successfully
used in various quantum chemistry codes. For other $O(n)$ algorithms for
transformation to an orthonormal basis, see \cite{jansik,ozaki,schweizer}.
In all these algorithms, sparsity is preserved by dropping small entries
in the course of the computation. Explicit decay bounds for the $Z_n$
factors could be used, in principle, to establish {\em a priori} which
matrix elements {\em not} to compute, thus reducing the amount
of overhead. Notice, however, that even if asymptotically bounded,
the condition numbers $\kappa_2(S_n)$ can be fairly large,
leading to rather pessimistic decay estimates. This is again
perfectly analogous to the
situation with the condition number-based error bounds for the conjugate gradient 
(CG) method
applied to a linear system $Ax=b$. And indeed,
both the CG error bounds and the estimates (\ref{dms}) are obtained 
using Chebyshev
polynomial approximation for the function $f(\lambda) = \lambda^{-1}$.

\section{The vanishing gap case}\label{metals}
In this section we discuss the case of a sequence $\{H_n\}$ of
bounded, finite range Hamiltonians for which the spectral gap
around the Fermi level $\mu$ vanishes as $n\to \infty$. Recall
that this means that $\inf_n \gamma_n = 0$, where 
$\gamma_n := \varepsilon_{n_e+1}^{(n)} - \varepsilon_{n_e}^{(n)}$ 
is the HOMO-LUMO gap for the $n$-th Hamiltonian; it is assumed here
that $\varepsilon_{n_e}^{(n)} < \mu < \varepsilon_{n_e+1}^{(n)}$ for all 
$n=n_b\cdot n_e$. The reciprocal $\gamma_n^{-1}$ of the gap
can be interpreted as the condition number of the problem \cite{ruben}, 
so a vanishing spectral gap means that the conditioning
deteriorates as $n_e\to \infty$ and the problem becomes
increasingly difficult.

As already mentioned, in the zero-temperature 
limit our decay bounds blow up, and therefore
lose all meaning as $\gamma_n \to 0$. On the other
hand, we know {\em a priori} that some type of decay should be
present, in view of the results in section \ref{proj}. 
A general treatment of the vanishing gap case appears to be
rather difficult. The main reason is that in the limit as
$\beta \to \infty$ the Fermi--Dirac approximation to the
Heaviside function becomes discontinuous, and therefore
we can no longer make use of tools from classical approximation theory 
for analytic functions. Similarly, in the vanishing gap case
the decay bounds (\ref{cauchy_bd}) based on the resolvent   
estimates (\ref{res_est}) break down since $c\to \infty$ and $\lambda \to 1$
in (\ref{cc_bound}).

Rather than attacking the problem in general, in
this section we give a complete
analysis of what is perhaps the simplest nontrivial example
of a sequence $\{H_n\}$ with vanishing gap. While this is 
only a special case, this example captures some of the 
essential features of the `metallic' case, such as the
rather slow off-diagonal decay of the entries of the density 
matrix. 
The simple model studied in this section may appear 
at first sight to be too simple
and unrealistic to yield any useful information about actual
physical systems. However, calculation of the
density matrix at zero temperature on a system composed of
500 Al atoms reported in \cite{zhang} reveals a decay behavior
which is essentially identical to that obtained analytically
for a free electron gas, a model very close to ours (which is
essentially a discrete variant of the one in \cite{zhang}). 
We believe that our analysis will shed some light on
more general situations in which a slowly decaying density matrix
occurs.

We begin by considering the infinite tridiagonal Toeplitz matrix
\begin{equation}\label{toep}
H=\left(\begin{array}{cccccc}
0 & \frac{1}{2} & & &&\\
\frac{1}{2}  & 0 & \frac{1}{2} &&&\\
& \ddots & \ddots & \ddots &&\\
&& \frac{1}{2} & 0 & \frac{1}{2}&\\
&&& \ddots  & \ddots & \ddots \end{array}\right)\,, 
\end{equation}
which defines a bounded, banded, self-adjoint operator
on ${\ell }^2$. The graph of this matrix is just a
(semi-infinite) path. The operator can be interpreted
as an averaging operator, or as a centered second-difference 
operator with a zero Dirichlet condition at one end, shifted
and scaled so as to have spectrum contained in $[-1,1]$.
From a physical standpoint, $H$ is the shifted and scaled
discrete one-electron Hamiltonian where the electron is constrained
to the half-line $[0,\infty)$. 

 For $n$ even ($n=2\cdot n_e$, with $n_e\in \naturals$)
consider the $n$-dimensional approximation 
\begin{equation}\label{toep_n}
H_n=\left(\begin{array}{ccccc}
0 & \frac{1}{2}  & &&\\
\frac{1}{2}  & 0 & \frac{1}{2} &&\\
& \ddots & \ddots & \ddots &\\
&& \frac{1}{2} & 0 & \frac{1}{2} \\
&&& \frac{1}{2} & 0 \end{array}\right)\,.
\end{equation}
This corresponds to truncating the semi-infinite path and imposing
zero Dirichlet conditions at both ends.
Let now $\{e_1, e_2, \ldots \}$ denote the standard basis of
${\ell }^2$, and let $\hat I$ denote the identity operator
restricted to the subspace of ${\ell }^2$ spanned by $e_{n+1}, e_{n+2},
\ldots$. Letting
$$H_{(n)}:= \left(\begin{array}{cc}
H_n & 0 \\
0   &\hat I\end{array}\right)\,,
$$
the sequence $\{H_{(n)}\}$ is now a sequence of bounded self-adjoint
linear operators on ${\ell }^2$ that converges strongly to $H$.
Note that $\sigma (H_n)\subset [-1,1]$ for all $n$; also,
$0\notin \sigma (H_n)$ for all even $n$. It is easy to see
that half of the eigenvalues of $H_n$ lie in $[-1,0)$ and the
other half in $(0,1]$. We set $\mu=0$ and we label as `occupied'
the states corresponding to negative eigenvalues.
The spectral gap of each $H_n$ is then $\varepsilon_{n/2 + 1}^{(n)}
-\varepsilon_{n/2}^{(n)}$. 

The eigenvalues and eigenvectors of $H_n$ are known
explicitly \cite[Lemma 6.1]{demmel}. Indeed, the eigenvalues, in
descending order, are given by
$\varepsilon_k^{(n)}=\cos \left (\frac{k\pi}{n+1}\right )$ (with $1\le k\le n$)
and the corresponding normalized eigenvectors are
given by $v_k^{(n)} = (v_k^{(n)}(j))$ with entries 
$$v_k^{(n)}(j) = \sqrt{\frac{2}{n+1}} \sin \left (\frac{jk\pi}{n+1}\right ),
\quad 1\le j\le n.$$ 
Note that the eigenvalues are symmetric with respect to the origin,
and that the spectral gap at 0 vanishes, since 
$\varepsilon_{n/2 + 1}^{(n)} = -\varepsilon_{n/2}^{(n)} \to 0$
as $n\to \infty$. We also point to the well known fact that the
eigenvectors of this operator are strongly delocalized. Nevertheless,
as we will see, some localization (decay) is present in the
density matrix, owing to cancellation (i.e., destructive interference).

Now let $P_n$ be the zero-temperature density matrix associated
with $H_n$, i.e., the spectral projector onto the subspace of $\complex^n$
spanned by the eigenvectors of $H_n$ associated with the lowest $n_e$
eigenvalues (the occupied subspace). We extend $P_n$ to a projector
acting on ${\ell }^2$ by embedding $P_n$ into an infinite
matrix $P_{(n)}$ as follows:
$$P_{(n)}:= \left(\begin{array}{cc}
P_n & 0 \\
0   & 0\end{array}\right)\,.
$$
Note that $P_{(n)}$ is just the orthogonal projector
onto the subspace of ${\ell }^2$ spanned by
the eigenvectors of $H_{(n)}$ associated with eigenvalues in the
interval $[-1,0)$. Moreover, ${\rm Tr}(P_{(n)})=
{\rm Tr}(P_n) = {\rm rank}(P_n) = \frac{n}{2} = n_e$.
The limiting behavior of the sequence
$\{P_{(n)}\}$ (hence, of $\{P_n\}$) is completely 
described by the following result.

\vspace{0.1in}

\begin{theorem}\label{no_gap}
Let $H$, $H_n$, and $P_{(n)}$ be as described above. Then
\begin{itemize}
\item[(i)] $H$ has purely absolutely continuous spectrum,\footnote{ 
The absolutely continuous spectrum of a self-adjoint linear operator
$H$ on a Hilbert space $\mathscr{H}$ is the spectrum of the restriction of $H$ to  
the subspace $\mathscr{H}_{\rm ac}\subseteq \mathscr{H}$ 
of vectors $\psi$ whose spectral measures $\mu_{\psi}$ are
absolutely continuous with respect to the Lebesgue measure.
For details, see 
\cite[pages 224--231]{RS}.}
given
by the interval $[-1,1]$. In particular, $H$ has no eigenvalues.
\item[(ii)] The union of the spectra of the $n$-dimensional sections
$H_n$ of $H$ is everywhere dense in $\sigma (H)=[-1,1]$. In other
words, every point in $[-1,1]$ is the limit of a sequence of the
form $\{\varepsilon_k^{(n)}\}$ for $n\to \infty$, where 
$\varepsilon_k^{(n)}\in \sigma (H_n)$ and $k=k(n)$.
\item[(iii)] The sequence $\{H_n\}$ has vanishing gap: $\inf_n \gamma_n=0$.
\item[(iv)] The spectral projectors $P_{(n)}$ converge strongly
to $P=h(H)$ where $h(x)=\chi_{[-1,0)} (x)$, the characteristic
function of the interval $[-1,0)$. 
\item[(v)] $P$ is the orthogonal projector onto an infinite-dimensional 
subspace of ${\ell}^2$.  
\end{itemize}
\end{theorem}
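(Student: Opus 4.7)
The plan is to diagonalize $H$ explicitly by means of the Fourier sine transform, and then read off each assertion. Define $U \colon \ell^2(\mathbb{N}) \to L^2([0,\pi],d\theta)$ by $(Ue_k)(\theta) = \sqrt{2/\pi}\,\sin(k\theta)$; completeness of the sine basis makes $U$ unitary. Using the identity $\sin((k-1)\theta) + \sin((k+1)\theta) = 2\cos\theta\,\sin(k\theta)$, together with the convention $\sin(0\cdot\theta) = 0$ reflecting the Dirichlet condition at the left end, a short calculation shows that $UHU^* = M_{\cos\theta}$, the operator of multiplication by $\cos\theta$ on $L^2([0,\pi])$ with Lebesgue measure. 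This operator has purely absolutely continuous spectrum equal to $\cos([0,\pi]) = [-1,1]$, which proves (i). Moreover $P = h(H) = U^* M_{\chi_{(\pi/2,\pi]}} U$, since $\cos\theta < 0$ iff $\theta \in (\pi/2,\pi]$; hence $P$ is unitarily equivalent to the projector onto $L^2((\pi/2,\pi],d\theta)$, which is infinite-dimensional, giving (v).

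Parts (ii) and (iii) are straightforward consequences of the explicit eigenvalue formula $\varepsilon_k^{(n)} = \cos(k\pi/(n+1))$. For (ii), given any target $x = \cos\theta_0 \in [-1,1]$, set $k(n) = \lfloor \theta_0(n+1)/\pi \rfloor$; then $k(n)\pi/(n+1)\to \theta_0$, so $\varepsilon_{k(n)}^{(n)} \to x$. For (iii), the two eigenvalues straddling $\mu=0$ are those with indices $n/2$ and $n/2+1$, corresponding to angles $\pi/2 \mp \pi/(2(n+1))$; thus
\[
\gamma_n \;=\; \cos\!\Bigl(\tfrac{\pi}{2} - \tfrac{\pi}{2(n+1)}\Bigr) - \cos\!\Bigl(\tfrac{\pi}{2} + \tfrac{\pi}{2(n+1)}\Bigr) \;=\; 2\sin\!\Bigl(\tfrac{\pi}{2(n+1)}\Bigr) \;\to\; 0.
\]

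For (iv), I would first establish strong operator convergence $H_{(n)} \to H$ on $\ell^2$. The difference $H - H_{(n)}$ vanishes in rows $1, \ldots, n-1$, while each of its nonzero rows depends only on the tail entries $\psi_n, \psi_{n+1}, \ldots$ of its argument; an elementary bound then gives $\|(H-H_{(n)})\psi\|^2 \le C \sum_{k\ge n}|\psi_k|^2$ for an absolute constant $C$, and this tail vanishes for every fixed $\psi \in \ell^2$. I would then invoke the standard functional calculus fact that, for a uniformly bounded sequence of self-adjoint operators $A_n \to A$ strongly, and a bounded Borel function $f$ whose discontinuity set has $A$-spectral measure zero, one has $f(A_n) \to f(A)$ strongly. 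Here $f = \chi_{[-1,0)}$ is discontinuous only at $\{0\}$, which by (i) has zero $H$-spectral measure. Noting finally that the $\hat I$ block of $H_{(n)}$ contributes only the eigenvalue $1$ and hence nothing to $\chi_{[-1,0)}(H_{(n)})$, so that $\chi_{[-1,0)}(H_{(n)})$ agrees with the block matrix $P_{(n)}$ in the statement, we conclude $P_{(n)} \to P$ strongly.

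The main obstacle is in (iv): passing to the limit through the discontinuous projector $h = \chi_{[-1,0)}$. Strong operator convergence alone does not preserve arbitrary Borel spectral projectors, and the absolute continuity of the spectrum of $H$ obtained in (i) must be used in an essential way to guarantee that no spectral mass of $H$ sits at the discontinuity $\mu = 0$. Once that point is handled, the remaining assertions are either immediate from the unitary equivalence established in the first paragraph or short trigonometric estimates.
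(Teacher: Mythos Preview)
Your argument is correct and in fact more self-contained than the paper's. The paper does not diagonalize $H$ explicitly; instead it invokes general Toeplitz theory (Rosenblum's theorem for the absolute continuity in (i), Hartman--Wintner and Grenander--Szeg\H{o} for (ii), and results from B\"ottcher--Silbermann to establish strong resolvent convergence for (iv), then Reed--Simon Theorem~VIII.24). Your route via the sine transform $U\colon e_k\mapsto \sqrt{2/\pi}\,\sin(k\theta)$ reduces (i) and (v) to obvious facts about the multiplication operator $M_{\cos\theta}$, and your direct verification that $H_{(n)}\to H$ strongly (which, for uniformly bounded self-adjoint operators, automatically yields strong resolvent convergence) replaces the citation used in the paper. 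The trade-off is that the paper's approach generalizes immediately to arbitrary banded Toeplitz operators via the cited literature, whereas yours exploits the specific form of this particular $H$; for this theorem, your explicit diagonalization is cleaner and makes the absolute continuity (hence the vanishing of spectral mass at $\mu=0$, which is the crux of (iv)) completely transparent.

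One small remark on (iv): the ``standard functional calculus fact'' you invoke is indeed standard, but the textbook statement (e.g.\ Reed--Simon VIII.24) is usually phrased for strong \emph{resolvent} convergence and continuous $f$, with the extension to $\chi_{[-1,0)}$ requiring the observation that weak convergence of projections to a projection implies strong convergence. You clearly have this in mind, but it would strengthen the write-up to say explicitly that uniform boundedness plus strong convergence gives strong resolvent convergence, and then cite the spectral-projector version of the theorem.
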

\begin{proof}
Statements (i)-(ii) are straightforward consequences of classical
results on the asymptotic eigenvalue distribution of Toeplitz matrices,
while (iv)-(v) follow from general results in spectral theory.
Statement (iii) was already noted (the eigenvalues of $H_n$ are
explicitly known) and it also follows from (i)-(ii).
 More in detail, statement (i) is a special case of
Rosenblum's Theorem on the spectra of infinite banded Toeplitz
matrices: see \cite{rosenblum} or \cite[Thm.~1.31]{BG}. For
the fact that the spectrum of $H$ coincides with the interval
$[-1,1]$ and that the finite section eigenvalues $\varepsilon_k^{(n)}$
are dense in $\sigma (H)=[-1,1]$ (statement (ii)), see 
the paper by Hartman and Wintner \cite{HW} or the book
by Grenander and Szeg\"o \cite[Chapter 5]{GS}.
Statement (iv) can be proved as follows. For a linear
operator $A$ on ${\ell}^2$, write $R_{\lambda}(A) =
(A - \lambda I)^{-1}$, with $\lambda \notin \sigma(A)$. A
sequence $\{A_n \}$ of self-adjoint (Hermitian) operators
 is said to converge in the {\em strong resolvent
sense} to $A$ if $R_{\lambda}(A_n)\longrightarrow R_{\lambda}(A)$ 
strongly for all $\lambda\in \complex$ with ${\rm Re}\, \lambda \ne 0$, that is:
$$\lim_{n\to \infty}\|R_{\lambda}(A_n)x - R_{\lambda}(A)x\| = 0 \quad
{\rm for \,\, all}\quad x\in {\ell}^2\,.$$
It is easy to check, using for instance the results in
\cite[Chapter 2]{BS98}, that the sequence $\{H_n\}$ converges
in the strong resolvent sense to $H$. Statement (iv) (as well as
(ii)) now follows from \cite[Thm.~VIII.24]{RS}. The fact (v) that
$P=h(H)$ is an orthogonal projector onto an infinite-dimensional subspace
of ${\ell}^2$ follows from the fact that $\mu=0$ is not an
eigenvalue of $H$ (because of (i)) and from the spectral theorem
for self-adjoint operators in Hilbert space; see, e.g., \cite[Chapter VII]{RS}
or \cite[Chapter 12]{Rudin}.
\end{proof}

The foregoing result implies that the Toeplitz matrix sequence
$\{H_n\}$ given by (\ref{toep_n}) exhibits some of the 
key features of the discrete Hamiltonians describing metallic systems,
in particular the vanishing gap property and the fact that the
eigenvalues tend to fill the entire energy spectrum. The 
sequence $\{H_n\}$ can be thought of as
a 1D `toy model' that can be solved analytically to gain some
insight into the decay properties of the density matrix 
such systems. 
Indeed,
from the knowledge of the eigenvectors of $H_n$ we can write down
the spectral projector corresponding to the lowest $n_e = n/2$
eigenvalues explicitly. Recalling that the eigenvalues $\varepsilon_k^{(n)}$
are given in descending order, it is convenient to compute
$P_n$ as the projector
onto the orthogonal complement of the subspace spanned by the
eigenvectors corresponding to the $n/2$ largest eigenvalues:
$$P_n = I_n - \sum_{k=1}^{n_e} v_k^{(n)} (v_k^{(n)})^T.$$
The $(i,j)$ entry of $P_n$ is therefore given by
$$[P_n]_{ij} = e_i^T P_n e_j =
\delta_{ij} - \frac{2}{n+1} \sum_{k=1}^{n_e} 
\sin \left (\frac{ik\pi}{n+1}\right )\sin \left (\frac{jk\pi}{n+1}\right ).$$
For $i=j$, we find
\begin{equation}\label{diag}
[P_n]_{ii} = 1- \frac{2}{n+1} \sum_{k=1}^{n_e}
\sin^2 \left (\frac{ik\pi}{n+1}\right ) = \frac{1}{2}, \quad 
{\rm for \,\, all\,\,} i=1,\ldots ,n \,\, {\rm and\,\, for \,\, all\,\,} n.
\end{equation}
Hence, for this system the charge density $P_{ii}$ is constant and the system
essentially behaves like a non-interacting electron gas, see for example \cite{GV}.
We note in passing that this example confirms that the bound
(\ref{off}) is sharp, since equality is attained for this
particular projector. 
Moreover, the trigonometric identity 
\begin{equation}\label{trigid}
\sin\theta \, \sin\phi = -\frac{1}{2}\left [\cos(\theta + \phi) -
\cos(\theta - \phi) \right ]
\end{equation}
implies for all $i,j=1,\ldots ,n$: 
\begin{equation}\label{p_ij}
[P_n]_{ij} = 
\frac{1}{n+1} \sum_{k=1}^{n_e} \left [
\cos \left (\frac{(i+j)k\pi}{n+1} \right ) - 
\cos \left (\frac{(i-j)k\pi}{n+1} \right )\right ]\,. 
\end{equation}
From (\ref{p_ij}) it immediately follows, for all $i$ and
for all $n$, that
\begin{equation}\label{jplus2}
[P_n]_{i,i+2l} = 0\,,\quad l=1,2,\ldots
\end{equation}
Since (\ref{diag}) and (\ref{jplus2}) hold for all $n$, they also hold
in the limit as $n\to \infty$. Hence, the strong limit $P$ of the
sequence of projectors $\{P_{(n)}\}$ satisfies $P_{ii} = 1/2$
and $P_{i,j}=0$ for all $j=i+2l$, where $i,l=1,2,\ldots$
To determine the remaining off-diagonal entries $P_{ij}$ (with
$j\ne i$ and $j\ne i+2l$) we directly compute the limit of
$[P_n]_{ij}$ as $n\to \infty$, as follows. Observe that  
using the substitution $x=k/(n+1)$ and taking the limit as $n\to
\infty$ in (\ref{p_ij}) we obtain for all $i\ge 1$ and for all
$j\ne i+2l$ ($l=0,1,\ldots$):
\begin{equation}\label{integral}
\begin{split}
P_{ij} & = \int_0^{\frac{1}{2}} \cos[(i+j)\pi x] \, dx -
         \int_0^{\frac{1}{2}} \cos[(i-j)\pi x] \, dx \\
       & = \frac{1}{\pi} \left [\frac{(-1)^{\frac{i+j-1}{2}}}{i+j} +
           \frac{(-1)^{\frac{i-j+1}{2}}}{i-j}\right ] \,.
\end{split}
\end{equation}
It follows from (\ref{integral}) that $|P_{ij}|$ is bounded
by a quantity that decays only linearly in the distance from the
main diagonal. As a result, $O(n)$ approximation of $P_n$ for
large $n$ involves a huge prefactor. Therefore, 
from this very simple example we can gain some insight
into the vanishing gap case. The analytical results obtained
show that the density matrix can exhibit rather slow decay,
confirming the well known fact that $O(n)$ approximations
pose a formidable challenge in the vanishing gap case.

The 2D case is easily handled as follows. We consider for 
simplicity the case of a square lattice consisting of $n^2$ points
in the plane. The 2D Hamiltonian is given by
$$ H_{n^2} = \frac{1}{2}(H_n\otimes I_n + I_n\otimes H_n)\,,$$
where the scaling factor $\frac{1}{2}$ is needed so as 
to have $\sigma(H_{n^2})\subset [-1,1]$. The eigenvalues and
eigenvectors of $H_{n^2}$ can be explicitly written in terms
of those of $H_n$; see, e.g., \cite{demmel}. Assuming again that
$n$ is even, exactly half of the $n^2$ eigenvalues of $H_{n^2}$
are negative (counting multiplicities), the other half positive.
As before, we are interested in finding the spectral projector associated
with the eigenvectors corresponding to negative eigenvalues. Note
again that the spectral gap tends to zero as $n\to \infty$. 
If $P_{n^2}$ denotes the spectral projector onto the occupied states,
it is not difficult to show that
\begin{equation}\label{2D_case}
P_{n^2} = P_n\otimes (I_n - P_n) + (I_n - P_n)\otimes P_n\,.
\end{equation}
It follows from (\ref{2D_case}) that the spectral projector $P_{n^2}$
has a natural $n\times n$ block structure, where

\begin{itemize}
\item Each diagonal block is equal to $\frac{1}{2}I_n$;
note that this gives the correct trace, ${\rm Tr}(P_{n^2}) = \frac{n^2}{2}$.
\item The $(k,l)$ off-diagonal block $\Pi_{kl}$ is given by
$\Pi_{kl}=[P_n]_{kl}(I_n - 2P_n)$. Hence, each off-diagonal block has a
`striped' structure, with the main diagonal as well as the third, fifth,
etc.~off-diagonal identically zero. Moreover, every block $\Pi_{kl}$
with $l=k+2m$ ($m\ge 1$) is zero. 
\end{itemize}

This shows that in the 2D case,
the rate of decay in the spectral projector is essentially 
the same as in the 1D case. The 3D case can be handled in a 
similar manner, leading to the same conclusion.

For this simple example we can also compute the entries of
the density matrix at positive electronic 
temperature $T>0$. Recalling that the density matrix in this case is
given by the Fermi--Dirac function with parameter $\beta =
1/(k_B T)$ we have in the 1D case (assuming $\mu=0$)
\begin{equation}\label{posTcase}
P_{ij}  = \frac{2}{n+1} \sum_{k=1}^n 
\frac{\sin \left (\frac{ik\pi}{n+1}\right )\sin \left (\frac{jk\pi}{n+1}\right )}
{1 + {\rm exp} \left [\beta \cos \left (\frac{k\pi}{n+1} \right ) \right ]}\,.
\end{equation}
Making use again of the trigonometric identity (\ref{trigid})
and using the same substitution $x=k/(n+1)$, we can reduce the
computation of the density matrix element $P_{ij}$ for $n\to \infty$
to the evaluation of the following integral:
\begin{equation}\label{int_posTcase}
P_{ij}=\int_0^1 \frac{\cos\, [(i-j)\pi x] - \cos\, [(i+j)\pi x]}
{1 + {\rm exp}\, ( \beta \cos \pi x )} \, dx \,.
\end{equation}
Unfortunately, this integral cannot be evaluated explicitly in terms
of elementary functions. Note, however, that the integral   
$$I_k = \int_0^1 \frac{\cos\, (k\pi x)}{1 + {\rm exp}\, ( \beta \cos \pi x )} \, dx$$
(where $k$ is an integer) becomes, under the change of variable
$\pi x= \arccos t$, 
$$I_k =
\frac{1}{\pi} \int_{-1}^1 \frac{\cos \,(k\,\arccos t)}{1+{\rm e}^{\beta t}}
\frac{dt}{\sqrt{1-t^2}}\,.
$$
Hence, up to a constant factor, $I_k$ is just the $k$th coefficient
in the Chebyshev expansion of the Fermi--Dirac function 
$1/(1+{\rm e}^{\beta t})$. Since the Fermi--Dirac function is analytic
on the interior of an ellipse containing the interval $[-1,1]$ and 
continuous on the boundary of such an ellipse, it follows from the
general theory of Chebyshev approximation that the coefficients
$I_k$ decay at least exponentially fast as $k\to \infty$; see, e.g, 
\cite{meinardus}. This in turn implies that the entries $P_{ij}$
given by (\ref{int_posTcase}) decay at least exponentially fast away from 
the main diagonal, the faster the larger the temperature is,
as already discussed in section \ref{dep_T}. 
Hence, for this special case
we have established in a more direct way the exponential decay behavior
already proved in general in section \ref{sec_fd}. In the present case, 
however, for any value of $\beta$ the decay rate of
the entries $P_{ij}$ given by (\ref{int_posTcase}) can be determined
to arbitrary accuracy by numerically computing the Chebyshev coefficients 
of the Fermi--Dirac
function.

We mention that a simple, one-dimensional model of a system with arbitrarily
small gap has been described in \cite{GLXE}. The
(continuous) Hamiltonian in \cite{GLXE} 
consists of the kinetic term plus a potential given by
a sum of Gaussian wells located at the
nuclei sites $X_i$: 
$$ {\mathcal H} = -\frac{1}{2}\frac{d^2}{dx^2} + V(x), \quad
V(x) = -\sum_{i=-\infty}^\infty \frac{a}{\sqrt{2\pi \sigma^2}}\, {\rm exp}\,(
-(x-X_i)^2/2\sigma^2)\,,$$
with $a>0$ and $\sigma>0$ tunable parameters. 
The spectra of this family of Hamiltonians present a band structure
with band gap proportional to $\sqrt{a}/\sigma$. Note that the model
reduces essentially to ours for $a\to 0$ and/or for $\sigma \to \infty$.
On the other hand, while the gap can be made arbitrarily small
by tuning the parameters in the model, for any choice of 
$\alpha >0$ and $\sigma >0$ the gap does not vanish; 
therefore, no approximation of
the infinite-size system with a sequence
of finite-size ones can lead to a vanishing gap
in the thermodynamic limit. This means that our bounds, when
applied to this model, will yield exponential decay, albeit 
very slow (since the correlation lengths will be quite large
for small $a\to 0$ and/or for large $\sigma$). 
The model in \cite{GLXE}, on the other hand, can be useful
for testing purposes when developing algorithms for metal-like
systems with slowly decaying density matrices.

\section{Other applications}\label{other}
In this section we sketch a few possible applications of our decay results to
areas other than electronic structure computations. 

\subsection{Density matrices for thermal states}
In quantum statistical mechanics, the equilibrium density matrix for a system of particles
subject to a heat bath at absolute temperature $T$ is defined as 
\begin{equation} \label{heath}
P = \frac {{\rm e}^{-\beta H}}{Z}, \quad {\rm where} \quad Z={\rm Tr}\,({\rm e}^{-\beta H})\,.
\end{equation} 
As usual, $\beta = (k_B T)^{-1}$ where $k_B$ denotes the Boltzmann constant;
see \cite{pathria}.
The matrix $P$ is the quantum analog of the canonical Gibbs state.
The Hamiltonian $H$ is usually assumed to have been shifted so that the smallest eigenvalue
is zero \cite[page 112]{Mac04}. 
Note that $P$ as defined in (\ref{heath}) is not an orthogonal projector.
It is, however, Hermitian 
and positive semidefinite. Normalization by the {\em partition
function} $Z$ ensures that $\sigma(P) \subset [0,1]$ and that ${\rm Tr}(P) = 1$.  

It is clear that for increasing temperature, i.e., for  $ T\to \infty$
(equivalently, for $\beta \to 0$) the canonical density matrix $P$ approaches the
identity matrix, normalized by the matrix size $n$. In particular, the 
off-diagonal entries tend to zero. The physical interpretation of this is that 
in the limit of large temperatures the system states become totally uncorrelated.
For temperatures approaching the absolute zero, on the other hand, the canonical
matrix $P$ tends to the orthogonal projector associated with the zero eigenvalue
(ground state). In this limit, the correlation between state $i$ and state $j$
is given by the $(i,j)$ entry of the orthogonal projector onto the eigenspace
corresponding to the zero eigenvalue, normalized by $n$. 

For finite, positive values of $T$, the canonical density matrix $P$ is full
but decays away from the main diagonal (or, more generally,
away from the sparsity pattern of $H$). The rate of decay depends on $\beta$:
the smaller it is, the faster the decay. Application of the bounds developed
in section \ref{sec_main} to the matrix exponential is straightforward. For instance,
the bounds based on Bernstein's Theorem take the form
\begin{equation}\label{exp_dec_bnds}
|[{\rm e}^{-\beta H}]_{ij}| \le C(\beta)\,{\rm e}^{-\alpha\, d(i,j)}, 
\quad i\ne j,
\end{equation}
where
$$ C(\beta) = \frac{2\, \chi}{\chi - 1}\,{\rm e}^{\beta\, \|H\|_2\, (\kappa_1 - 1)/2}
\quad {\rm and} \quad \alpha = 2 \ln \chi.$$
In these expressions, $\chi >1$ and $\kappa_1 >1$ are the parameters associated
with the Bernstein ellipse with foci in $-1$ and $1$ and major semi-axis 
$\kappa_1$, as described in section
\ref{sec_main}. Choosing $\chi$ large makes the exponential term
decay ${\rm e}^{-\alpha\, d(i,j)}$ very fast, but causes $C(\beta)$ to grow larger.
Clearly, a smaller $\beta$ makes the upper bound (\ref{exp_dec_bnds}) smaller. 
Bounds on the entries of the canonical density matrix $P$ can be obtained
dividing through the upper bounds by $Z$. Techniques for estimating $Z$
can be developed using the techniques described in \cite{MMQ}; see also
\cite{benziboito}.

Although the bound (\ref{exp_dec_bnds}) is an exponentially decaying one,
it can be shown that the decay in the entries of a banded or sparse matrix
is actually {\em super}-exponential. This can be shown by expanding the
exponential in a series of Chebyshev polynomials and using the fact that
the coefficients in the expansion, which can be expressed in terms of
Bessel functions, decay to zero super-exponentially; see \cite{meinardus}  
and also \cite{Ise00}. The decay bounds obtained in this way are, 
however, less transparent and more complicated to evaluate than 
(\ref{exp_dec_bnds}).

Finally, exponential decay bounds for spectral projectors
and other matrix functions might provide a
rigorous justification for $O(n)$ algorithms recently
developed for disordered systems; see, e.g., \cite{Sac04,Sac05}. 

\subsection{Quantum information theory}
A related area of research where our decay bounds
for matrix functions have proven useful is
the study of quantum many-body systems in
information theory; see, e.g., \cite{cramer,CEPD06,ECB10,Schuch07,SCW06}.  
In particular, relationships between spectral gaps and rates of
decay for functions of finite range Hamiltonians have been established in
\cite{cramer} using the techniques introduced in \cite{benzigolub}. 
The exponential decay of correlations and its relation to the spectral
gap have also been studied in \cite{hast1,hast2}.

As shown in \cite{CEPD06}, exponential decay bounds for matrix
functions play a crucial role
in establishing so-called {\em area laws} for the {\em entanglement
entropy} of ground states associated with bosonic systems. 
These area laws essentially state that the entanglement
entropy associated with a 3D bosonic lattice is proportional
to the surface area, rather than to the volume, of the lattice.
Intriguingly, such area laws are analogous to those governing
the Beckenstein--Hawking black hole entropy.
We refer the interested reader to
the recent, comprehensive survey paper \cite{ECB10} for 
additional information.

\subsection{Complex networks}
The study of complex networks is an emerging field of science currently
undergoing vigorous development. Researchers in this highly interdisciplinary 
field
include mathematicians, computer scientists, physicists, chemists, engineers,
neuroscientists, biologists, social scientists, etc. Among the
mathematical tools used in this field,  linear algebra
and graph theory, in particular
spectral graph theory, play a major role. 
Also, statistical mechanics concepts and
techniques have been found to be ideally suited to the study 
of large-scale networks.

 In recent years, quantitative methods of
network analysis have increasingly made use of matrix functions. 
This approach has been spearheaded in the works of Estrada,
Rodr\' iguez-Vel\' azquez,
D.~Higham and Hatano; see, e.g., \cite{E10,E12,EH08,EH09,EH10,ERV05},
as well as the recent surveys \cite{estrada,EHB12} 
and the references therein.
Functions naturally arising in the context of network analysis include
the exponential, the resolvent, and hyperbolic functions, among others.
Physics-based justifications for the use of these matrix functions
in the analysis of complex networks have been thoroughly
discussed in \cite{EHB12}.

For example, the exponential of the adjacency matrix $A$ associated
with a simple, undirected graph $G=(V,E)$ can be used to give natural
definitions of important measures associated with nodes in $G$, 
such as the {\em subgraph centrality} associated with node $i$,
defined as $C(i)=[{\rm e}^A]_{ii}$, and the {\em communicability} associated
with two distinct nodes $i$ and $j$, defined as $C(i,j)=[{\rm e}^A]_{ij}$.
Other network quantities that can be expressed in terms of the entries in
appropriate matrix functions of $A$ include betweenness, returnability,
vulnerability, and so forth. The graph Laplacian $L=D-A$, where 
$D={\rm diag}(d_1,\ldots ,d_n)$ with $d_i$ denoting the degree of
node $i$, is sometimes used instead of the adjacency matrix, as
well as weighted analogues of both $A$ and $L$. 

Most networks arising in real-world applications are sparse,
often with degree distributions closely approximated by power laws.
Because the maximum degree in such \lq\lq scale-free\rq\rq\  networks   
increases as the number of nodes tends to infinity, one cannot
expect uniform exponential decay rates to hold asymptotically for the matrix
functions associated with such graphs unless additional
structure is imposed, for instance in the
form of weights. Nevertheless, 
our bounds for the entries of functions of sparse matrices can be used
to obtain estimates on quantities such as the communicability between 
two nodes. 
A discussion of locality (or the lack thereof) in matrix 
functions used in the analysis of complex networks can be found 
in \cite{EHB12}.
We also refer the reader to \cite{benziboito} for a description
of quadrature rule-based
bounds for the entries of matrix functions associated with complex networks.

\subsection{Tridiagonal eigensolvers}

The solution of symmetric tridiagonal eigenvalue problems plays an important
role in many field of computational science. As noted for example in
\cite{VP}, solving such problems is key for most dense real symmetric
(and complex Hermitian) eigenvalue computations and therefore plays a
central role in standard linear algebra libraries such as LAPACK and
ScaLAPACK. Even in the sparse case, the symmetric tridiagonal
eigenvalue problem appears as a step in the Lanczos algorithm. 

The
efficiency of symmetric tridiagonal eigensolvers can be significantly
increased by exploiting localization in the eigenvectors (more 
generally, invariant subspaces) associated with an isolated cluster
of eigenvalues. It would be highly desirable to identify beforehand
any localization in the eigenspace in a cost-effective manner, as
this would lead to reduced computational costs \cite{parlett,VP}. It is clear that 
this problem is essentially the same as the one considered in this
paper, with the additional assumption that the matrix $H$ is
tridiagonal. Given estimates on the location of the cluster of
eigenvalues and on the size of the gaps separating it from the remainder of
the spectrum, the techniques described in this paper can be used to
bound the entries in the spectral projector associated to the cluster
of interest; in turn, the bounds can be used to identify banded approximations
to the spectral projectors with guaranteed prescribed error.
Whether the estimates obtained in this manner are accurate
enough to lead to practical algorithms with substantially improved
run times and storage demands over current ones remains an open
question for further research.
 
Finally, in the recent paper \cite{ye12} the exponential
decay results in \cite{benzigolub} are used to derive error 
bounds and stopping
criteria for the Lanczos method applied to
the computation of ${\rm e}^{-tA}v$, where $A$ is a large symmetric
positive definite matrix, $v$ is a vector, and $t>0$. The bounds are
applied to the exponential of the tridiagonal matrix $T_k$ generated
after $k$ steps by the Lanczos process  in order to obtain the
approximation error after $k$ steps.

\subsection{Non-Hermitian extensions}
Although the main focus of the paper has been the study of functions
of sparse Hermitian matrices, many of our results can be
extended, under appropriate conditions, to non-Hermitian matrices.  
The generalizations of our decay bounds to {\em normal} matrices,
including for example skew-Hermitian ones, is relatively straightforward;
see, e.g., the results in \cite{benzirazouk} and \cite{nadersthesis}.
Further generalizations to diagonalizable matrices have been
given in \cite{benzirazouk}, although the bounds now contain
additional terms taking into account the departure from
normality. 
These bounds may be difficult to use in practice, as
knowledge of the eigenvectors or of the field of values
of the matrix is needed.
Bounds for functions of general
sparse matrices can also be obtained using contour integration;
see, e.g., \cite{nadersthesis} and \cite{mastronardi}.
It is quite possible that
these bounds will prove useful in applications involving
functions of sparse, non-normal matrices. Examples include
functions of digraphs in network analysis, like returnability,
or functions of the Hamiltonians occurring in
the emerging field of non-Hermitian quantum mechanics;
see, respectively, \cite{EH09} and \cite{BBM99,BBJ03,NHQM}.

\section{Conclusions and open problems}\label{concl}
In this paper we have described a general theory of localization
for the density matrices associated with certain sequences of banded or
sparse discrete Hamiltonians of increasing size. We have obtained, under
very general conditions, exponential decay bounds for
the off-diagonal entries of zero-temperature density matrices for
gapped systems (`insulators') and for density matrices associated with 
systems at positive electronic temperature. The theory, while purely
mathematical, 
recovers well-known physical phenomena such as
the fact that the rate of decay is faster at higher
temperatures and for larger gaps, and even captures the
correct asymptotics for small gaps and low temperatures.
Thus, we have provided a theoretical
justification for the development of $O(n)$ methods 
for electronic structure computations. As an integral
part of this theory,
we have also surveyed the approximation of rapidly 
decaying matrices by banded or sparse ones, the effects of
transforming a Hamiltonian from a non-orthogonal to an 
orthogonal basis, and some general properties of orthogonal
projectors. 

In the case of zero-temperature and vanishing gaps, our bounds
deteriorate for increasing $n$. In the limit as $n\to \infty$ 
we no longer have exponentially decaying bounds, which is entirely
consistent with the physics. For metallic
systems at zero temperature the decay in the spectral
projector follows a power law, and we have exhibited
a simple model Hamiltonians for which the decay 
in the corresponding density matrix is only linear in the distance
from the main diagonal. 

Because of the slow decay, the development
of $O(n)$ methods in the metallic case at zero temperature is
problematic. We refer the reader to \cite{baerhg2,bowler1,lin2,watson} for some attempts
in this direction, but the problem remains essentially open. 
In the metallic case it may be preferable 
to keep $P$ in the factorized form 
$P= XX^*$, where $X\in \complex^{n\times n_e}$ 
is any matrix whose columns span the occupied subspace,
and to seek a
maximally localized $X$. Note that 
$$ P = XX^* = (XU)(XU)^*$$
 for any unitary $n_e\times n_e$
matrix $U$, so the question is whether the occupied subspace admits
a set of basis vectors that can be rotated 
so as to become as localized as possible. Another 
possibility is to research the use of rank-structured approximations
(such as hierarchical matrix techniques \cite{hackbusch}) to the spectral
projector. Combinations of tensor product
approximations and wavelets appear to be promising. We refer
here to \cite{goedeckerivanov} for a study of the decay properties
of density matrices in a wavelet basis (see also \cite{SW06}), and 
to \cite{BCM99} for an early attempt to exploit near low-rank
properties of spectral projectors. See  also
the more recent works by W.~Hackbusch and collaborators 
\cite{chinnam1,chinnam3,chinnam2,flad1,flad2,luo}. 

Besides the motivating application of electronic structure, our theory
is also applicable to other problems where localization plays a prominent
role. We hope that this paper will stimulate further research in this
fascinating and important area at the crossroads of mathematics,
physics, and computing.

\vspace{0.2in}

{\bf Acknowledgements.} 
We are indebted to three anonymous referees for carefully reading the
original manuscript and for suggesting a number of corrections
and improvements. Thanks also
to the handling Editor, Fadil Santosa, for useful feedback and
for his patience and understanding during the several months
it took us to revise the paper.
We would also like to acknowledge useful discussions with David Borthwick, 
Matt Challacombe, Jean-Luc Fattebert, Roberto Grena, Daniel 
Kressner and Maxim Olshanskii. Finally, we are
grateful to Jacek Jakowski for providing the
data for the linear alkane. 
\medspace


\end{document}